\theoremstyle{plain}
\newtheorem{theorem}{Theorem}[section]
\newtheorem{claim}[theorem]{Claim}
\newtheorem{case}{Case}
\newtheorem{corollary}[theorem]{Corollary}
\newtheorem{lemma}[theorem]{Lemma}
\newtheorem{proposition}[theorem]{Proposition}
\newtheorem{theorem-definition}[theorem]{Theorem-Definition}
\theoremstyle{definition}
\newtheorem{definition}[theorem]{Definition}
\newtheorem{notation}[theorem]{Notation}
\newtheorem{convention}[theorem]{Convention}
\theoremstyle{remark}
\newtheorem{example}[theorem]{Example}
\newtheorem{remark}[theorem]{Remark}
\numberwithin{equation}{section}
\newcommand{\N}{{\mathds{N}}}
\newcommand{\Q}{{\mathds{Q}}}
\newcommand{\R}{{\mathds{R}}}
\newcommand{\C}{{\mathds{C}}}
\newcommand{\T}{{\mathds{T}}}
\newcommand{\D}{{\mathfrak{D}}}
\newcommand{\A}{{\mathfrak{A}}}
\newcommand{\B}{{\mathfrak{B}}}
\newcommand{\M}{{\mathfrak{M}}}
\newcommand{\Lip}{{\mathsf{L}}}
\newcommand{\qpropinquity}[1]{{\mathsf{\Lambda}_{#1}}}
\newcommand{\Kantorovich}[1]{{\mathsf{mk}_{#1}}}
\newcommand{\Haus}[1]{{\mathsf{Haus}_{#1}}}
\newcommand{\StateSpace}{{\mathscr{S}}}
\newcommand{\mongekant}{{Mon\-ge-Kan\-to\-ro\-vich metric}}
\newcommand{\Lqcms}{{\JLL} quantum compact metric space}
\newcommand{\Qqcms}[1]{${#1}$-quasi-Leibniz quantum compact metric space}
\newcommand{\gQqcms}{quasi-Leibniz quantum compact metric space}
\newcommand{\unit}{1}
\newcommand{\sa}[1]{{\mathfrak{sa}\left({#1}\right)}}
\newcommand{\JLL}{Lei\-bniz}
\newcommand{\dom}[1]{{\operatorname*{dom}({#1})}}
\newcommand{\diam}[2]{{\mathrm{diam}\left({#1},{#2}\right)}}
\newcommand{\Lipball}[2]{{\mathfrak{Lip}_{#1}\left({#2}\right)}}
\newcommand{\bridgereach}[2]{{\varrho\left({#1}\middle|{#2}\right)}}
\newcommand{\bridgeheight}[2]{{\varsigma\left({#1}\middle|{#2}\right)}}
\newcommand{\bridgelength}[2]{{\lambda\left({#1}\middle|{#2}\right)}}
\newcommand{\bridgenorm}[2]{{\mathsf{bn}_{ {#1}  }\left({#2}\right)}}
\newcommand{\alg}[1]{{\mathfrak{#1}}}
\newcommand{\BaireSpace}{{\mathscr{N}}}
\newcommand{\indmor}[2]{{\underrightarrow{#1^{#2}}}}
\newcommand{\af}[1]{{\alg{AF}_{#1}}}
\renewcommand{\geq}{\geqslant}
\renewcommand{\leq}{\leqslant}
\newcommand{\CondExp}[2]{{\mathds{E}\left({#1}\middle\vert{#2}\right)}}
\newcommand{\Latremoliere}{Latr\'{e}moli\`{e}re}
\newcommand{\vast}{\bBigg@{4}}
\newcommand{\Vast}{\bBigg@{5}}
\begin{document}
\title[AF algebras in the quantum propinquity space]{AF algebras in the quantum Gromov-Hausdorff propinquity space}
\author{Konrad Aguilar}
\address{School of Mathematical and Statistical Sciences \\ Arizona State University \\ Tempe AZ 85281}
\email{konrad.aguilar@gmail.com}
%\urladdr{http://www.math.du.edu/~kaguilar/}

%\MakeLowercase{onrad} A\MakeLowercase{guilar}$^\MakeLowercase{a}$ \\ \\
%$^\MakeLowercase{a}$S\MakeLowercase{chool of} M\MakeLowercase{athematical and }S\MakeLowercase{tatistical }S\MakeLowercase{ciences}\\
%A\MakeLowercase{rizona }S\MakeLowercase{tate} U\MakeLowercase{niversity} \\
%T\MakeLowercase{empe}, AZ 85281

\date{\today}
\subjclass[2010]{Primary:  46L89, 46L30, 58B34.}
\keywords{Noncommutative metric geometry, Gromov-Hausdorff convergence, Monge-Kantorovich distance, Quantum Metric Spaces, Lip-norms, AF algebras}
\thanks{The author  was partially supported by the Simons - Foundation grant 346300 and the Polish Government MNiSW 2015-2019 matching fund.}
\thanks{Some of this work was completed during the Simons semester hosted at IMPAN during September-December 2016 titled ``Noncommutative Geometry - The Next Generation''.}

\begin{abstract}
For unital AF algebras with faithful tracial states, we provide criteria for convergence in the quantum Gromov-Hausdorff propinquity.  For any unital AF algebra, we introduce new Leibniz Lip-norms using quotient norms.  Next, we show that any unital AF algebra is the limit of its defining inductive sequence of finite dimensional C*-algebras in quantum propinquity given any  Lip-norm defined on the inductive sequence.  We also establish sufficient conditions for when a *-isomorphism between two AF algebras produces a quantum isometry in the context of various Lip-norms. 
\end{abstract}
\maketitle
\setcounter{tocdepth}{2}
\tableofcontents

%%%%%%%%%%%%%%%%%%%%%%%%%%%%%%%%%%%%%%%%%%%

\section{Introduction}

The Gromov-Hausdorff propinquity \cite{Latremoliere13,Latremoliere13b,Latremoliere14,Latremoliere15,  Latremoliere16}, a noncommutative analogue of the Gromov-Hausdorff distance, provides a new framework to study the geometry of classes of C*-algebras, opening new avenues of research in noncommutative geometry by work of \Latremoliere \ building off notions introduced by Rieffel \cite{Rieffel98a, Rieffel00} and Connes \cite{Connes89, Connes}.  In collaboration with \Latremoliere, our previous work in \cite{AL} served to introduce AF algebras into the realm of noncommutaive metric geometry. In particular, given a unital AF algebra with a faithful tracial state, we endowed such an AF algebra, viewed as an inductive limit,  with quantum metric structure and showed that these AF algebras are indeed limits of the given inductive sequence  of finite dimensional algebras in the quantum propinquity topology. From here, we were able to construct a H\"{o}lder-continuous function from the Baire space onto the class of UHF algebras of Glimm \cite{Glimm60} and a continuous map from the Baire space onto the class Effros-Shen AF algebras introduced in \cite{Effros80b} viewed as quantum metric spaces, and therefore both these classes inherit the metric geometry of the Baire Space via continuous maps.

In Section (\ref{conv-af}), we continue this journey and develop some useful generalizations to our work in \cite{AL}.  In particular, we note that the given a unital AF algebra with faithful tracial state, the Lip-norm constructed in \cite{AL} is constructed by three structures:  the inductive sequence,  the faithful tracial state, and a positive sequence vanishing at infinity, which is usually taken to be some form of the reciprocal of the dimensions of the finite dimensional spaces of the inductive sequence.  From this, we provide suitable notions of convergence for all of these three structures, which all together imply convergence of families of AF algebras, in which we introduce the notion of fusing families of inductive limits. This imparts a generalization that implies the continuity results of \cite{AL} pertaining to the UHF and Effros-Shen AF algebras.  The results of this section greatly simplify the task of providing convergence of AF algebras in the quantum propinquity topology and should prove useful in future projects (see \cite{Aguilar16}).

Next, Rieffel's work in \cite{Rieffel11} allows us to produce Leibniz Lip-norms for all unital AF algebras with or without faithful tracial state via quotient norms instead of conditional expectations in Section (\ref{Leibniz-Lip-AF}).  Furthermore, these Lip-norms still provide convergence of finite-dimensional subalgebras in quantum proqintuity to the AF algebra as is done also in the faithful tracial state case. However, in this section, we show a stronger result in that convergence of the finite dimensional subalgebras only depends on the existence of a Lip-norm satsifying a natural density condition on its domain. This shows that all unital AF algebras are also approximately finite dimensional in the sense of propinquity.

Finally, we study quantum isometries between AF algebras or when two AF algebras produce the same equivlance classes that form the quantum propinquity metric space. We split this up between the cases of the Lip-norms from faithful tracial state and the Lip-norms from quotient norms.   Furthermore, since finite dimensional approximations are so crucial to convergence results, the conditions for quantum isometries for AF algebras are also chosen so that they produce quantum isometries for the finite dimensional subalgebras of the inductive sequence.

\section{Background: Quantum Metric Geometry and AF algebras}
The purpose of this section is to discuss our progress thus far in the realm of quantum metric spaces with regard to AF algebras, but we also provide a cursory overview of the material on quantum compact metric spaces since we will utilize each result listed here in some capacity later in the paper.  We refer the reader to the survey by \Latremoliere \ \cite{Latremoliere15b} for a much more detailed and insightful introduction to the study of quantum metric spaces.

\begin{notation}
When $E$ is a normed vector space, then its norm will be denoted by $\|\cdot\|_E$ by default.
\end{notation}

\begin{notation}
Let $\A$ be a unital C*-algebra. The unit of $\A$ will be denoted by $\unit_\A$. The state space of $\A$ will be denoted by $\StateSpace(\A)$ while the self-adjoint part of $\A$ will be denoted by $\sa{\A}$. 
\end{notation}

\begin{definition}[\cite{Rieffel98a, Latremoliere13, Latremoliere15}]\label{quasi-Monge-Kantorovich-def}
A {\Qqcms{(C,D)}} $(\A,\Lip)$, for some $C\geq 1$ and $D\geq 0$, is an ordered pair where $\A$ is unital C*-algebra and $\Lip$ is a seminorm defined $\sa{\A}$ such that $\dom{\Lip}=\{ a \in \sa{\A}: \Lip (a) < \infty \}$ is a unital dense Jordan-Lie subalgebra  of $\sa{\A}$ such that:
\begin{enumerate}
\item $\{ a \in \sa{\A} : \Lip(a) = 0 \} = \R\unit_\A$,
\item the seminorm $\Lip$ is a \emph{$(C,D)$-quasi-Leibniz Lip-norm}, i.e. for all $a,b \in \dom{\Lip}$:
\begin{equation*}
\max\left\{ \Lip\left(\frac{ab+ba}{2}\right), \Lip\left(\frac{ab-ba}{2i}\right) \right\} \leq C\left(\|a\|_\A \Lip(b) + \|b\|_\A\Lip(a)\right) + D \Lip(a)\Lip(b)\text{,}
\end{equation*}
where $\frac{ab+ba}{2}$ and $ \frac{ab-ba}{2i}$ define the Jordan and Lie product, respectively,
\item the \emph{\mongekant} defined, for all two states $\varphi, \psi \in \StateSpace(\A)$, by:
\begin{equation*}
\Kantorovich{\Lip} (\varphi, \psi) = \sup\left\{ |\varphi(a) - \psi(a)| : a\in\dom{\Lip}, \Lip(a) \leq 1 \right\}
\end{equation*}
metrizes the weak* topology of $\StateSpace(\A)$,
\item the seminorm $\Lip$ is lower semi-continuous with respect to $\|\cdot\|_\A$.
\end{enumerate}
If all but (2) above is satisfied, then we call $(\A,\Lip)$ a {\em quantum compact metric space}.
\end{definition}

Rieffel initiated the systematic study of quantum compact metric space with the following characterization of these spaces, which can be seen as a noncommutative form of the Arz{\'e}la-Ascoli theorem.

\begin{theorem}[\cite{Rieffel98a,Rieffel99, Ozawa05}]\label{Rieffel-thm}
Let $\A$ be a unital C*-algebra and $\Lip$ a seminorm defined on $\sa{\A}$ such that $\dom{\Lip}$ is a unital dense subspace of $\sa{\A}$ and $\Lip(a) = 0$ if and only if $a\in\R\unit_\A$. The following three assertions are equivalent:
\begin{enumerate}
\item the {\mongekant} $\Kantorovich{\Lip}$ metrizes the weak* topology on $\StateSpace(\A)$;
\item for some state $\mu \in \StateSpace(\A)$, the set:
\begin{equation*}
 \Lipball{1}{\A, \Lip_\A, \mu }:=\left\{ a\in\sa{\A} : \Lip(a)\leq 1, \mu(a) = 0\right\}
\end{equation*}
is totally bounded for $\|\cdot\|_\A$;
\item 
the set:
\begin{equation*} \{ a+\R1_\A \in \sa{\A}/\R1_\A : a \in \dom{\Lip}, \Lip(a) \leq 1 \}
\end{equation*} is totally bounded in $\sa{\A}/\R1_\A$ for $\Vert \cdot \Vert_{\sa{\A}/\R1_\A}$; 
\end{enumerate}
\end{theorem}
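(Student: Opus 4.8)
The plan is to run the three-way equivalence as $(2)\Leftrightarrow(3)$ (a normed-space triviality), then $(2)\Rightarrow(1)$ (checking $\Kantorovich{\Lip}$ is a metric and identifying its topology with the weak* topology), and finally $(1)\Rightarrow(2)$, which carries the real content.

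\textbf{$(2)\Leftrightarrow(3)$.} I would argue purely at the level of normed spaces. Fixing a state $\mu$, the kernel $L_\mu := \{a\in\sa{\A}:\mu(a)=0\}$ is a linear complement of $\R\unit_\A$ in $\sa{\A}$, and the restriction to $L_\mu$ of the quotient map $a\mapsto a+\R\unit_\A$ is a linear bijection onto $\sa{\A}/\R\unit_\A$. It is norm-nonincreasing, and its inverse is bounded because the representative $a-\mu(a)\unit_\A\in L_\mu$ of a coset $a+\R\unit_\A$ satisfies $\|a-\mu(a)\unit_\A\|_\A\leq 2\|a+\R\unit_\A\|_{\sa{\A}/\R\unit_\A}$ (for each $t\in\R$ one has $a-\mu(a)\unit_\A=(a-t\unit_\A)-\mu(a-t\unit_\A)\unit_\A$, whence the norm is $\leq 2\|a-t\unit_\A\|_\A$; then take the infimum over $t$). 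So this bijection is bi-Lipschitz and preserves total boundedness. Since $\Lip$ is a seminorm vanishing on $\R\unit_\A$, it is unchanged by adding scalars, so the bijection carries $\Lipball{1}{\A,\Lip,\mu}$ onto the set in (3); as a byproduct, the property in (2) turns out to be independent of the choice of $\mu$.

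\textbf{$(2)\Rightarrow(1)$.} After translating $a$ by $\mu(a)\unit_\A$ (harmless, since states agree on $\unit_\A$) one has $\Kantorovich{\Lip}(\varphi,\psi)=\sup\{|(\varphi-\psi)(b)|:b\in\Lipball{1}{\A,\Lip,\mu}\}$; total boundedness of that ball (hence its norm-boundedness) makes this supremum finite, and norm-density of $\dom{\Lip}$ together with $\{a:\Lip(a)=0\}=\R\unit_\A$ makes $\Kantorovich{\Lip}$ separate states, so $\Kantorovich{\Lip}$ is a genuine metric. To identify its topology with the weak* topology I would show the identity map $(\StateSpace(\A),w^\ast)\to(\StateSpace(\A),\Kantorovich{\Lip})$ is continuous: given a weak*-convergent net $(\varphi_\alpha)$ and $\varepsilon>0$, cover the norm-compact closure of $\Lipball{1}{\A,\Lip,\mu}$ by finitely many balls $B(b_1,\varepsilon),\dots,B(b_m,\varepsilon)$; once $\max_i|(\varphi_\alpha-\varphi)(b_i)|<\varepsilon$, any $b$ in the ball lies within $\varepsilon$ of some $b_i$, giving $|(\varphi_\alpha-\varphi)(b)|<3\varepsilon$ and hence $\Kantorovich{\Lip}(\varphi_\alpha,\varphi)\leq 3\varepsilon$. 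Since $(\StateSpace(\A),w^\ast)$ is compact (Banach--Alaoglu) and the $\Kantorovich{\Lip}$-topology is Hausdorff, this continuous bijection is a homeomorphism, so the two topologies coincide.

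\textbf{$(1)\Rightarrow(2)$.} This is the hard part. Assume $\Kantorovich{\Lip}$ metrizes $w^\ast$ and fix any state $\mu$; put $K=\Lipball{1}{\A,\Lip,\mu}$. Now $(\StateSpace(\A),\Kantorovich{\Lip})$ is a \emph{compact} metric space, hence of finite diameter $R$; since $\Lip(b)\leq 1$ for $b\in K$, every state $\varphi$ gives $|\varphi(b)|=|\varphi(b)-\mu(b)|\leq\Kantorovich{\Lip}(\varphi,\mu)\leq R$, so $\|b\|_\A=\sup_\varphi|\varphi(b)|\leq R$ and $K$ is norm-bounded. The remaining idea is that a $\Lip$-bounded element is almost determined by finitely many states: given $\varepsilon>0$, pick a finite $\tfrac\varepsilon4$-net $\varphi_1,\dots,\varphi_n$ of $(\StateSpace(\A),\Kantorovich{\Lip})$ and set $T\colon\sa{\A}\to\R^n$, $T(b)=(\varphi_1(b),\dots,\varphi_n(b))$; then $T(K)$ is bounded in $\R^n$, hence totally bounded, so there are $b_1,\dots,b_m\in K$ with $\{T(b_i)\}_i$ a $\tfrac\varepsilon2$-net of $T(K)$ for $\|\cdot\|_\infty$. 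For $b\in K$, choosing $i$ with $\|T(b)-T(b_i)\|_\infty\leq\tfrac\varepsilon2$ and, for an arbitrary state $\varphi$, an index $j$ with $\Kantorovich{\Lip}(\varphi,\varphi_j)\leq\tfrac\varepsilon4$ yields
\begin{equation*}
|\varphi(b-b_i)|\leq|\varphi(b)-\varphi_j(b)|+|\varphi_j(b)-\varphi_j(b_i)|+|\varphi_j(b_i)-\varphi(b_i)|\leq\tfrac\varepsilon4+\tfrac\varepsilon2+\tfrac\varepsilon4=\varepsilon,
\end{equation*}
the two outer terms bounded using $\Lip(b),\Lip(b_i)\leq 1$; since $b-b_i$ is self-adjoint this gives $\|b-b_i\|_\A\leq\varepsilon$, so $\{b_1,\dots,b_m\}$ is an $\varepsilon$-net of $K$, proving (2). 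I expect the only delicate points to be the bookkeeping around the translations by $\mu(a)\unit_\A$ (equivalently, the passage through $L_\mu$) and the compact-to-Hausdorff step; I note that lower semicontinuity of $\Lip$ (Definition \ref{quasi-Monge-Kantorovich-def}(4)) plays no role in this characterization.
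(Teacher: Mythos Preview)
The paper does not supply its own proof of this theorem: it is quoted as a background result from the cited references \cite{Rieffel98a,Rieffel99,Ozawa05}, so there is nothing to compare your argument against within the paper itself. That said, your proposal is a correct and self-contained proof following the standard route one finds in Rieffel's original papers: the bi-Lipschitz identification $L_\mu\cong\sa{\A}/\R\unit_\A$ handles $(2)\Leftrightarrow(3)$; the compact-to-Hausdorff trick dispatches $(2)\Rightarrow(1)$; and the finite-net-of-states argument gives $(1)\Rightarrow(2)$. The bookkeeping you flag (translating by $\mu(a)\unit_\A$, and the use of $\|b\|_\A=\sup_{\varphi}|\varphi(b)|$ for self-adjoint $b$) is handled correctly, and you are right that lower semicontinuity of $\Lip$ is irrelevant here.
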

A primary interest in developing a theory of quantum metric spaces is the introduction of various hypertopologies on classes of such spaces, thus allowing us to study the geometry of classes of C*-algebras and perform analysis on these classes. A classical model for our hypertopologies is given by the Gromov-Hausdorff distance \cite{Gromov81, Gromov}. While several noncommutative analogues of the Gromov-Hausdorff distance have been proposed --- most importantly Rieffel's original construction of the quantum Gromov-Hausdorff distance \cite{Rieffel00} --- we shall work with a particular metric introduced by \Latremoliere, \cite{Latremoliere13}, as we did in \cite{AL}. This metric, known as the quantum propinquity, is designed to be best suited to {\gQqcms s}, and in particular, is zero between two such spaces if and only if they are quantum isometric. We now propose a summary of the tools needed to compute upper bounds on this metric.

\begin{definition}[{\cite[Definition 3.1]{Latremoliere13}}]
The \emph{$1$-level set} $\StateSpace_1(\D|\omega)$ of an element $\omega$ of a unital C*-algebra $\D$ is:
\begin{equation*}
\left\{ \varphi \in \StateSpace(\D) : \varphi((1-\omega^\ast\omega))=\varphi((1-\omega \omega^\ast)) = 0 \right\}\text{.}
\end{equation*}
\end{definition}
Next, we define the notion of a  \Latremoliere \ bridge, which is not only crucial in the definition of the quantum propinquity but also the convergence results of \Latremoliere \ in \cite{Latremoliere13c} and Rieffel in \cite{Rieffel15}. In particular, the pivot of Definition (\ref{bridge-def}) is of utmost importance in the convergence results of \cite{Latremoliere13c, Rieffel15}.
\begin{definition}[{\cite[Definition 3.6]{Latremoliere13}}]\label{bridge-def}
A \emph{bridge} from $\A$ to $\B$, where $\A$ and $\B$ are unital C*-algebras, is a quadruple $(\D,\pi_\A,\pi_\B,\omega)$ where:
\begin{enumerate}
\item $\D$ is a unital C*-algebra,
\item the element $\omega$, called the \emph{pivot} of the bridge, satisfies $\omega\in\D$ and $\StateSpace_1(\D|\omega) \not=\emptyset$,
\item $\pi_\A : \A\hookrightarrow \D$ and $\pi_\B : \B\hookrightarrow\D$ are unital *-monomorphisms.
\end{enumerate}
\end{definition}

In the next few definitions, we denote by $\Haus{\mathrm{d}}$ the \emph{Hausdorff (pseudo)distance} induced by a (pseudo)distance $\mathrm{d}$ on the compact subsets of a (pseudo)metric space $(X,\mathrm{d})$ \cite{Hausdorff}.

\begin{definition}[{\cite[Definition 3.16]{Latremoliere13}}]
Let $(\A,\Lip_\A)$ and $(\B,\Lip_\B)$ be two {\gQqcms s}. The \emph{height} $\bridgeheight{\gamma}{\Lip_\A,\Lip_\B}$ of a bridge \\ $\gamma = (\D,\pi_\A,\pi_\B,\omega)$ from $\A$ to $\B$, and with respect to $\Lip_\A$ and $\Lip_\B$, is given by:
\begin{equation*}
\max\left\{ \Haus{\Kantorovich{\Lip_\A}}(\StateSpace(\A), \pi_\A^\ast(\StateSpace_1(\D|\omega))), \Haus{\Kantorovich{\Lip_\B}}(\StateSpace(\B), \pi_\B^\ast(\StateSpace_1(\D|\omega))) \right\}\text{,}
\end{equation*}
where $\pi_\A^{\ast}$ and $\pi_\B^\ast$ are the dual maps of $\pi_\A$ and $\pi_\B$, respectively.
\end{definition}

\begin{definition}[{\cite[Definition 3.10]{Latremoliere13}}]
Let $(\A,\Lip_\A)$ and $(\B,\Lip_\B)$ be two unital C*-algebras. The \emph{bridge seminorm} $\bridgenorm{\gamma}{\cdot}$ of a bridge $\gamma = (\D,\pi_\A,\pi_\B,\omega)$ from $\A$ to $\B$ is the seminorm defined on $\A\oplus\B$ by:
\begin{equation*}
\bridgenorm{\gamma}{a,b} = \|\pi_\A(a)\omega - \omega\pi_\B(b)\|_\D
\end{equation*}
for all $(a,b) \in \A\oplus\B$.
\end{definition}

We implicitly identify $\A$ with $\A\oplus\{0\}$ and $\B$ with $\{0\}\oplus\B$ in $\A\oplus\B$ in the next definition, for any two spaces $\A$ and $\B$.

\begin{definition}[{\cite[Definition 3.14]{Latremoliere13}}]
Let $(\A,\Lip_\A)$ and $(\B,\Lip_\B)$ be two {\gQqcms s}. The \emph{reach} $\bridgereach{\gamma}{\Lip_\A,\Lip_\B}$ of a bridge \\
 $\gamma = (\D,\pi_\A,\pi_\B,\omega)$ from $\A$ to $\B$, and with respect to $\Lip_\A$ and $\Lip_\B$, is given by:
\begin{equation*}
\Haus{\bridgenorm{\gamma}{\cdot}}\left( \left\{a\in\sa{\A} : \Lip_\A(a)\leq 1\right\} , \left\{ b\in\sa{\B} : \Lip_\B(b) \leq 1 \right\}  \right) \text{.}
\end{equation*}
\end{definition}

We thus choose a natural quantity to synthesize the information given by the height and the reach of a bridge:

\begin{definition}[{\cite[Definition 3.17]{Latremoliere13}}]
Let $(\A,\Lip_\A)$ and $(\B,\Lip_\B)$ be two {\gQqcms s}. The \emph{length} $\bridgelength{\gamma}{\Lip_\A,\Lip_\B}$ of a bridge $\gamma = (\D,\pi_\A,\pi_\B,\omega)$ from $\A$ to $\B$, and with respect to $\Lip_\A$ and $\Lip_\B$, is given by:
\begin{equation*}
\max\left\{\bridgeheight{\gamma}{\Lip_\A,\Lip_\B}, \bridgereach{\gamma}{\Lip_\A,\Lip_\B}\right\}\text{.}
\end{equation*}
\end{definition}

\begin{theorem-definition}[\cite{Latremoliere13, Latremoliere15}]\label{def-thm}
Fix $C\geq 1$ and $D \geq 0$. Let $\mathcal{QQCMS}_{C,D}$ be the class of all {\Qqcms{(C,D)}s}. There exists a class function $\qpropinquity{C,D}$ from $\mathcal{QQCMS}_{C,D}\times\mathcal{QQCMS}_{C,D}$ to $[0,\infty) \subseteq \R$ such that:
\begin{enumerate}
\item for any $(\A,\Lip_\A), (\B,\Lip_\B) \in \mathcal{QQCMS}_{C,D}$ we have:
\begin{equation*}
 \qpropinquity{C,D}((\A,\Lip_\A),(\B,\Lip_\B)) \leq \max\left\{\diam{\StateSpace(\A)}{\Kantorovich{\Lip_\A}}, \diam{\StateSpace(\B)}{\Kantorovich{\Lip_\B}}\right\}\text{,}
\end{equation*}
\item for any $(\A,\Lip_\A), (\B,\Lip_\B) \in \mathcal{QQCMS}_{C,D}$ we have:
\begin{equation*}
0\leq \qpropinquity{C,D}((\A,\Lip_\A),(\B,\Lip_\B)) = \qpropinquity{C,D}((\B,\Lip_\B),(\A,\Lip_\A))
\end{equation*}
\item for any $(\A,\Lip_\A), (\B,\Lip_\B), (\alg{C},\Lip_{\alg{C}}) \in \mathcal{QQCMS}_{C,D}$ we have:
\begin{equation*}
\qpropinquity{C,D}((\A,\Lip_\A),(\alg{C},\Lip_{\alg{C}})) \leq \qpropinquity{C,D}((\A,\Lip_\A),(\B,\Lip_\B)) + \qpropinquity{C,D}((\B,\Lip_\B),(\alg{C},\Lip_{\alg{C}}))\text{,}
\end{equation*}
\item for all  for any $(\A,\Lip_\A), (\B,\Lip_\B) \in \mathcal{QQCMS}_{C,D}$ and for any bridge $\gamma$ from $\A$ to $\B$, we have:
\begin{equation*}
\qpropinquity{C,D}((\A,\Lip_\A), (\B,\Lip_\B)) \leq \bridgelength{\gamma}{\Lip_\A,\Lip_\B}\text{,}
\end{equation*}
\item for any $(\A,\Lip_\A), (\B,\Lip_\B) \in \mathcal{QQCMS}_{C,D}$, we have:
\begin{equation*}
\qpropinquity{C,D}((\A,\Lip_\A),(\B,\Lip_\B)) = 0
\end{equation*}
if and only if $(\A,\Lip_\A)$ and $(\B,\Lip_\B)$ are quantum isometric, i.e. if and only if there exists a *-isomorphism $\pi : \A \rightarrow\B$ with $\Lip_\B\circ\pi = \Lip_\A$, 
\item if $\Xi$ is a class function from $\mathcal{QQCMS}_{C,D}\times \mathcal{QQCMS}_{C,D}$ to $[0,\infty)$ which satisfies Properties (2), (3) and (4) above, then:
\begin{equation*}
\Xi((\A,\Lip_\A), (\B,\Lip_\B)) \leq \qpropinquity{C,D}((\A,\Lip_\A),(\B,\Lip_\B))
\end{equation*}
 for all $(\A,\Lip_\A)$ and $(\B,\Lip_\B)$ in $\mathcal{QQCMS}_{C,D}$
\end{enumerate}
\end{theorem-definition}
 The quantum propinquity is, in fact, a special form of the dual \\ Gromov-Hausdorff propinquity \cite{Latremoliere13b, Latremoliere14, Latremoliere15} also introduced by \Latremoliere, which is a complete metric, up to isometric isomorphism, on the class of {\Lqcms s}, and which extends the topology of the Gromov-Hausdorff distance as well. Thus, as the dual propinquity is dominated by the quantum propinquity \cite{Latremoliere13b}, we conclude that \emph{all the convergence results in this paper are valid for the dual Gromov-Hausdorff propinquity as well.}

\begin{convention}
In this paper, $\qpropinquity{}$ will be meant for $\qpropinquity{C,D}$ since the Lip-norms will make the notation clear.  Also, if a Lip-norm is a $(1,0)$-Leibniz Lip-norm, then we call it a Leibniz Lip-norm.
\end{convention}

Now that the quantum Gromov-Hausdorff propinquity is defined, we provide a main result from \cite{AL}.
For our work in AF algebras, it turns out that our Lip-norms are $(2,0)$-quasi-Leibniz Lip-norms.   The following Theorem (\ref{AF-lip-norms-thm}) is \cite[Theorem 3.5]{AL}. But, first some notation.

\begin{notation}\label{ind-lim}
Let $\mathcal{I} = (\A_n,\alpha_n)_{n\in\N}$ be an inductive sequence, in which $\A_n$ is a C*-algebra and $\alpha_n$ is a *-homomorphism for all $n \in \N$, with limit $\A=\varinjlim \mathcal{I}$. We denote the canonical *-homomorphisms $\A_n \rightarrow\A$ by $\indmor{\alpha}{n}$ for all $n\in\N$, (see {\cite[Chapter 6.1]{Murphy90}}).
\end{notation}

\begin{theorem}[{\cite[Theorem 3.5]{AL}}]\label{AF-lip-norms-thm}
Let $\A$ be a unital AF algebra endowed with a faithful tracial state $\mu$. Let $\mathcal{I} = (\A_n,\alpha_n)_{n\in\N}$ be an inductive sequence of finite dimensional C*-algebras with C*-inductive limit $\A$, with $\A_0 \cong \C$ and where $\alpha_n$ is unital and injective for all $n\in\N$.

Let $\pi$ be the GNS representation of $\A$ constructed from $\mu$ on the space $L^2(\A,\mu)$.

For all $n\in\N$, let:
\begin{equation*}
\CondExp{\cdot}{\indmor{\alpha}{n}(\A_n)} : \A\rightarrow\A
\end{equation*}
be the unique conditional expectation of $\A$ onto the canonical image $\indmor{\alpha}{n}\left(\A_n\right)$ of $\A_n$ in $\A$, and such that $\mu\circ\CondExp{\cdot}{\indmor{\alpha}{n}(\A_n)} = \mu$.

Let $\beta: \N\rightarrow (0,\infty)$ have limit $0$ at infinity. If, for all $a\in\sa{\cup_{n \in \N} \indmor{\alpha}{n}\left(\A_n\right)}$, we set:
\begin{equation*}
\Lip_{\mathcal{I},\mu}^\beta(a) = \sup\left\{\frac{\left\|a - \CondExp{a}{\indmor{\alpha}{n}(\A_n)}\right\|_\A}{\beta(n)} : n \in \N \right\},
\end{equation*}
then $\left(\A,\Lip_{\mathcal{I},\mu}^\beta\right)$ is a {\Qqcms{(2,0)}}. Moreover:
\begin{enumerate}
\item
$\qpropinquity{}\left(\left(\A_n,\Lip_{\mathcal{I},\mu}^\beta\circ\indmor{\alpha}{n} \right), \left(\A,\Lip_{\mathcal{I},\mu}^\beta \right)\right) \leq \beta(n) $ for all $n \in \N$
\item and thus:
\begin{equation*}
\lim_{n\rightarrow\infty} \qpropinquity{}\left(\left(\A_n,\Lip_{\mathcal{I},\mu}^\beta\circ\indmor{\alpha}{n}\right), \left(\A,\Lip_{\mathcal{I},\mu}^\beta\right)\right) = 0\text{.}
\end{equation*}
\end{enumerate}
\end{theorem}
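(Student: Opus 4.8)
The plan is to verify the four conditions of Definition (\ref{quasi-Monge-Kantorovich-def}) for the $[0,\infty]$-valued seminorm $\Lip:=\Lip_{\mathcal{I},\mu}^\beta$ on $\sa{\A}$ given by the same supremum formula, to deduce the Monge--Kantorovich condition from Theorem (\ref{Rieffel-thm}), and then to realize the bound $\beta(n)$ by an explicit bridge. Write $\Exp_n(\cdot):=\CondExp{\cdot}{\indmor{\alpha}{n}(\A_n)}$. At the outset I would record the facts to be used repeatedly: each $\Exp_n$ is a norm-one unital positive projection onto the finite-dimensional algebra $\indmor{\alpha}{n}(\A_n)$ which is an $\indmor{\alpha}{n}(\A_n)$-bimodule map; the family is compatible, $\Exp_m\circ\Exp_n=\Exp_n\circ\Exp_m=\Exp_{\min(m,n)}$, because the images are nested; $\Exp_0=\mu(\cdot)\unit_\A$ since $\A_0\cong\C$; one has $\|a-\Exp_n(a)\|_\A\leq\beta(n)\Lip(a)$ for all $a$ and $n$ straight from the definition of the supremum; and $\bigcup_n\indmor{\alpha}{n}(\A_n)$ is norm-dense in $\A$, so that $\dom{\Lip}$, containing the norm-dense set $\sa{\bigcup_n\indmor{\alpha}{n}(\A_n)}$, is dense, and (being the finiteness domain of a supremum of seminorms) a unital subspace of $\sa{\A}$.

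Conditions (1) and (4) are then immediate: if $\Lip(a)=0$ then $a=\Exp_0(a)=\mu(a)\unit_\A\in\R\unit_\A$, and conversely $\Lip(\unit_\A)=0$; and $\Lip$ is the supremum of the norm-continuous seminorms $a\mapsto\|a-\Exp_n(a)\|_\A/\beta(n)$, hence lower semicontinuous. For the quasi-Leibniz condition (2) I would split $a=\Exp_n(a)+(a-\Exp_n(a))$ and use the bimodule property to get $\Exp_n(\Exp_n(a)\,b)=\Exp_n(a)\Exp_n(b)$, whence
\begin{equation*}
ab-\Exp_n(ab)=\Exp_n(a)\,(b-\Exp_n(b))+(\mathrm{id}-\Exp_n)\bigl((a-\Exp_n(a))\,b\bigr)\text{,}
\end{equation*}
which gives $\|ab-\Exp_n(ab)\|_\A\leq\beta(n)\bigl(\|a\|_\A\Lip(b)+2\|b\|_\A\Lip(a)\bigr)$ by norm-one of $\Exp_n$; splitting $b$ first yields the symmetric bound, and averaging the two gives $\|ab-\Exp_n(ab)\|_\A\leq\tfrac{3}{2}\beta(n)\bigl(\|a\|_\A\Lip(b)+\|b\|_\A\Lip(a)\bigr)$. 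Halving the sum of this estimate for $ab$ and for $ba$ then bounds the Lip-seminorm of both the Jordan and the Lie product of $a,b$ by $\tfrac{3}{2}\bigl(\|a\|_\A\Lip(b)+\|b\|_\A\Lip(a)\bigr)\leq 2\bigl(\|a\|_\A\Lip(b)+\|b\|_\A\Lip(a)\bigr)$, so $\Lip$ is $(2,0)$-quasi-Leibniz and $\dom{\Lip}$ is in particular a Jordan--Lie subalgebra. For condition (3), by Theorem (\ref{Rieffel-thm}) it suffices that $\Lipball{1}{\A,\Lip,\mu}$ be totally bounded: for $a$ in this set $\Exp_0(a)=\mu(a)\unit_\A=0$ forces $\|a\|_\A\leq\beta(0)$, and given $\varepsilon>0$ one picks $N$ with $\beta(N)<\varepsilon$, so $\|a-\Exp_N(a)\|_\A<\varepsilon$ with $\Exp_N(a)$ in the $\beta(0)$-ball of the finite-dimensional space $\sa{\indmor{\alpha}{N}(\A_N)}$; a finite $\varepsilon$-net there is a finite $2\varepsilon$-net of $\Lipball{1}{\A,\Lip,\mu}$, and $\varepsilon$ being arbitrary, total boundedness follows. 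This shows $(\A,\Lip)$ is a $(2,0)$-quasi-Leibniz quantum compact metric space; that each $(\A_n,\Lip\circ\indmor{\alpha}{n})$ is one as well is automatic, $\Lip\circ\indmor{\alpha}{n}$ vanishing exactly on $\R\unit_{\A_n}$ by injectivity of $\indmor{\alpha}{n}$, being $(2,0)$-quasi-Leibniz as a pullback along an isometric Jordan--Lie morphism, and the other conditions holding trivially in finite dimension.

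For the estimate, by part (4) of Theorem-Definition (\ref{def-thm}) it is enough to produce for each $n$ a bridge of length at most $\beta(n)$, and I would take $\gamma_n=(\A,\indmor{\alpha}{n},\mathrm{id}_\A,\unit_\A)$: ambient algebra $\A$, pivot the unit, so that $\bridgenorm{\gamma_n}{a,b}=\|\indmor{\alpha}{n}(a)-b\|_\A$. Its height vanishes, since $\StateSpace_1(\A|\unit_\A)=\StateSpace(\A)$, $\mathrm{id}_\A^\ast(\StateSpace(\A))=\StateSpace(\A)$, and $\indmor{\alpha}{n}^\ast(\StateSpace(\A))=\StateSpace(\A_n)$ because states extend off unital C*-subalgebras, so both relevant Hausdorff distances are $0$. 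For the reach: given $b\in\sa{\A}$ with $\Lip(b)\leq 1$, put $a:=(\indmor{\alpha}{n})^{-1}(\Exp_n(b))\in\sa{\A_n}$; then $\bridgenorm{\gamma_n}{a,b}=\|\Exp_n(b)-b\|_\A\leq\beta(n)$, and $\Lip(\indmor{\alpha}{n}(a))=\Lip(\Exp_n(b))\leq\Lip(b)\leq 1$, while the reverse inclusion is trivial ($a\mapsto\indmor{\alpha}{n}(a)$ has bridge-seminorm $0$). Hence the reach, and so the length of $\gamma_n$, is at most $\beta(n)$, giving $\qpropinquity{}((\A_n,\Lip\circ\indmor{\alpha}{n}),(\A,\Lip))\leq\beta(n)$; letting $n\to\infty$ finishes the proof.

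The one step above that is not pure bookkeeping — and the place I expect the real obstacle — is the inequality $\Lip(\Exp_n(b))\leq\Lip(b)$ used in the reach computation. I would prove it from compatibility of the conditional expectations: for $m\geq n$ the $m$-th term of the defining supremum for $\Lip(\Exp_n(b))$ vanishes since $\Exp_m(\Exp_n(b))=\Exp_n(b)$, while for $m<n$ one has $\Exp_n\circ(\mathrm{id}-\Exp_m)=(\mathrm{id}-\Exp_m)\circ\Exp_n$, hence $\Exp_n(b)-\Exp_m(\Exp_n(b))=\Exp_n\bigl(b-\Exp_m(b)\bigr)$ and norm-one of $\Exp_n$ gives $\|\Exp_n(b)-\Exp_m(\Exp_n(b))\|_\A\leq\|b-\Exp_m(b)\|_\A\leq\beta(m)\Lip(b)$; taking the supremum over $m$ yields the claim. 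The only other point needing a small idea is the symmetrization that keeps the quasi-Leibniz constant within $(2,0)$; everything else reduces to the defining inequality $\|a-\Exp_n(a)\|_\A\leq\beta(n)\Lip(a)$ and the standard properties of the trace-preserving conditional expectations onto the nested finite-dimensional subalgebras.
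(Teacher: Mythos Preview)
Your proposal is correct. Note, however, that this paper does not itself prove Theorem~\ref{AF-lip-norms-thm}: it is quoted as background from \cite[Theorem~3.5]{AL}, and the closure-of-union variant (Theorem~\ref{AF-lip-norms-thm-union}) is likewise dispatched with ``the proof follows the same process.'' So there is no in-paper argument to compare against line by line.

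That said, your outline is precisely the natural one, and it matches the ingredients the paper visibly relies on elsewhere. The compatibility $\Exp_m\circ\Exp_n=\Exp_{\min(m,n)}$ and the resulting finiteness of the supremum on $\sa{\bigcup_n\indmor{\alpha}{n}(\A_n)}$ are exactly what underlies Expression~(\ref{fd-lip}) in Lemma~\ref{lip-cont}; the bridge $(\A,\indmor{\alpha}{n},\mathrm{id}_\A,\unit_\A)$ with unit pivot is the same device used repeatedly in the paper (e.g.\ the proofs of Theorem~\ref{fd-cont} and Proposition~\ref{fd-approx-af-prop}); and the crucial contraction $\Lip(\Exp_n(b))\leq\Lip(b)$, which you correctly identify as the one non-formal step and prove from $\Exp_n(b-\Exp_m(b))=\Exp_n(b)-\Exp_m(\Exp_n(b))$ for $m<n$, is indeed the heart of the reach estimate. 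Your quasi-Leibniz argument via the bimodule identity and symmetrization yielding constant $3/2\leq 2$ is clean and correct; the original source may obtain the $(2,0)$ bound slightly differently, but your route is unimpeachable. No gaps.
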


In \cite{AL}, the fact that the defining finite-dimensional subalgebras provide approximations of the inductive limit with respect to the quantum Gromov-Hausdorff propinquity along with Inequality (1) of Theorem (\ref{AF-lip-norms-thm}) allowed us to prove that both the UHF algebras and the Effros-Shen AF algebras are continuous images of the Baire space \cite{Miller95} with respect to the quantum propinquity.  These are \cite[Theorem 4.9]{AL} and \cite[Theorem 5.14]{AL}, respectively. 

\section{Convergence of AF algebras in the Gromov-Hausdorff propinquity}\label{conv-af}
Taking stock of our construction of Lip-norms for unital AF algebras with faithful tracial state in Theorem (\ref{AF-lip-norms-thm}), it is apparent that the construction relies on the inductive sequence, faithful tracial state, and some real-valued positive sequence converging to $0$.  Thus, this section provides suitable notions of convergence for all 3 of these structures, which in turn produce convergence of AF algebras in the quantum propinquity.  This is motivated by our arguments of continuity in \cite{AL} for the UHF and Effros-Shen AF algebras, and therefore, provides these continuity results as a consequence of Theorem (\ref{af-cont}).  We first present a basic Lemma (\ref{trace-conv-lemma}) which provides an equivalent condition for weak-* convergence of tracial states on a finite-dimensional C*-algebra.

\begin{notation}
Let $\overline{\N}=\N \cup \{\infty\}$ denote the Alexandroff compactification of $\N$ with respect to the discrete topology of $\N$. For $N \in \N$, let $\N_{\geq N} = \{ k \in \N : k \geq N \}$, and similarly, for $\overline{\N}_{\geq N}$.
\end{notation}

 \begin{notation}\label{matrix-units}
For all $d \in \N$, we denote the full matrix algebra of $d\times d$ matrices over $\C$ by $\alg{M}(d)$.

Let $\B=\oplus_{j=1}^N \alg{M}(n(j))$ for some $N \in\N$ and $n(1),\ldots,n(N) \in \N\setminus\{0\}$. For each $k\in \{1,\ldots,N\}$ and for each $j,m \in \{1,\ldots,n(k)\}$, we denote the matrix $((\delta_{u,v}^{j,m}))_{u,v  =1,\ldots, n(k)}$ by $e_{k,j,m}$, where we used the Kronecker symbol:
\begin{equation*}
\delta_a^b = \begin{cases}
1 & \text{ if $a=b$,}\\
0 & \text{ otherwise,}
\end{cases}
\end{equation*} 
where $\{e_{k,j,m} \in \B : k\in \{1,\ldots,N\},j,m \in \{1,\ldots,n(k)\}\}$ is called the set of {\em matrix units} of $\B$.
\end{notation}

\begin{lemma}\label{trace-conv-lemma}
Let $\A=\oplus_{j=1}^{N} \M(d(j))$ for some $N \in \N \setminus \{0\}$ and $d(1), \ldots, d(N) \in \N$. 

If $\{\tau^n : \A \longrightarrow \C \}_{  n \in \overline{\N}}$ is a family of tracial states, then  for each $(n,j) \in \overline{\N} \times \{1,\ldots, N \}$, there exist $\lambda_{n,j} \in [0,1]$ such that:
\begin{equation*}\tau^n (a_1, \ldots, a_N)=\sum_{j=1}^N \lambda_{n,j} \mathsf{tr}_{d(j)}(a_j), \ \forall (a_1, \ldots, a_N) \in \A,
\end{equation*} where $ \mathsf{tr}_{d(j)}$ is the unique normalized trace on $\M(d(j))$, and $(\tau^n)_{n \in \N}$ converges to $\tau^\infty$ in the weak* topology on $\StateSpace (\A)$ if and only if $((\lambda_{n,1}, \lambda_{n,2}, \ldots \lambda_{n,N}))_{n \in \N}$ converges to $(\lambda_{\infty,1}, \lambda_{\infty,2} , \ldots , \lambda_{\infty,N})$ in the product topology on $\R^N$.
\end{lemma}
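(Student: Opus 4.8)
The plan is to first recall the standard classification of tracial states on a finite-dimensional C*-algebra, then reduce the weak* convergence statement to convergence of finitely many coordinates by testing against a convenient finite spanning set of self-adjoint elements. The first part is essentially algebraic: any tracial state $\tau$ on $\M(d(j))$ is a nonnegative multiple of the unique normalized trace $\mathsf{tr}_{d(j)}$ (because traces on $\M(d(j))$ form a one-dimensional space, spanned by $\mathsf{tr}_{d(j)}$), and a tracial state on the direct sum $\A = \oplus_{j=1}^N \M(d(j))$ restricts to a positive tracial functional on each block; writing $\lambda_{n,j}$ for the mass $\tau^n$ assigns to the $j$-th block applied to its normalized trace, positivity gives $\lambda_{n,j} \geq 0$, and $\tau^n(\unit_\A) = 1$ gives $\sum_{j=1}^N \lambda_{n,j} = 1$, so in particular each $\lambda_{n,j} \in [0,1]$. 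This yields the claimed formula $\tau^n(a_1,\ldots,a_N) = \sum_{j=1}^N \lambda_{n,j}\,\mathsf{tr}_{d(j)}(a_j)$.

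For the equivalence, I would argue both directions via a finite test family. Using the matrix units $e_{k,j,m}$ of Notation~\ref{matrix-units} (applied with $n(k) = d(k)$), note that for each block index $k$ the element $e_{k,1,1} \in \sa{\A}$ (a rank-one projection sitting in the $k$-th summand) satisfies $\tau^n(e_{k,1,1}) = \lambda_{n,k}/d(k)$. Hence if $(\tau^n)_{n\in\N}$ converges to $\tau^\infty$ weak*, then evaluating at each $e_{k,1,1}$ shows $\lambda_{n,k}/d(k) \to \lambda_{\infty,k}/d(k)$, so $\lambda_{n,k} \to \lambda_{\infty,k}$ for every $k$, i.e. the vectors converge in $\R^N$. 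Conversely, suppose $(\lambda_{n,1},\ldots,\lambda_{n,N}) \to (\lambda_{\infty,1},\ldots,\lambda_{\infty,N})$. For any fixed $a = (a_1,\ldots,a_N) \in \A$ we have
\begin{equation*}
|\tau^n(a) - \tau^\infty(a)| \leq \sum_{j=1}^N |\lambda_{n,j} - \lambda_{\infty,j}|\,|\mathsf{tr}_{d(j)}(a_j)| \leq \|a\|_\A \sum_{j=1}^N |\lambda_{n,j} - \lambda_{\infty,j}| \xrightarrow[n\to\infty]{} 0,
\end{equation*}
using $|\mathsf{tr}_{d(j)}(a_j)| \leq \|a_j\|_{\M(d(j))} \leq \|a\|_\A$. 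Since weak* convergence on the state space (which is norm-bounded and lives in the dual of the finite-dimensional space $\A$) is exactly pointwise convergence on $\A$, this gives $\tau^n \to \tau^\infty$ weak*.

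I do not expect a genuine obstacle here; the lemma is elementary. The only point requiring a little care is making sure the ``coordinate'' functionals $\lambda_{n,j}$ are well-defined and that the normalization conventions ($\mathsf{tr}_{d(j)}$ normalized so $\mathsf{tr}_{d(j)}(\unit) = 1$) are used consistently, so that the identity $\tau^n(e_{k,1,1}) = \lambda_{n,k}/d(k)$ comes out with the right constant; everything else is a one-line estimate plus the finite-dimensionality of $\A$, which makes weak* convergence of states coincide with entrywise convergence.
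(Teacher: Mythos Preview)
Your proof is correct and follows essentially the same approach as the paper: the reverse implication is identical, and the forward implication uses the same idea of evaluating at a distinguished element in each block, the only cosmetic difference being that the paper tests against the block identity $I^j$ (so that $\tau^n(I^j) = \lambda_{n,j}$ directly) while you test against the rank-one projection $e_{k,1,1}$ (picking up a harmless factor of $d(k)$).
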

\begin{proof}
The fact that tracial states of $\A$ have the above form is given by \cite[Example IV.5.4]{Davidson}.  We begin with the forward implication.  For $j \in \{1,2,\ldots, N \}$ , let $I^j=(b_1, b_2 , \ldots, b_{N}) \in \A$ such that $b_l=0$ for $l \neq j$ and $b_j= I_j$, where $I_j$ is the identity matrix in $\M(d(j))$.  By assumption, for each $j \in \{1,2,\ldots, N \}$, we have that  
\begin{equation*}
\lim_{n \to \infty} \lambda_{n,j}=\lim_{n \to \infty}\lambda_{n,j} \mathsf{tr}_{d(j)}(I_j)=  \lim_{n \to \infty}   \tau^n (I^j) = \tau^\infty (I^j) = \lambda_{\infty,j} , 
\end{equation*}  
which also provides convergence in the product topology since the product is finite. 

For the reverse implication, fix $b=(b_1, b_2 , \ldots, b_{N}) \in \A$.  Fix $n \in \N, $ then:
\begin{equation*}
\begin{split}
\left\vert \tau^n (b) -\tau^\infty(b) \right\vert &=\left\vert \left(\sum_{j=1}^{N}\lambda_{n,j}\mathsf{tr}_{d(j)}(b_j) \right) - \left(\sum_{j=1}^{N}\lambda_{\infty,j}\mathsf{tr}_{d(j)}(b_j)\right) \right\vert \\
& = \left\vert \sum_{j=1}^{N} (\lambda_{n,j}- \lambda_{\infty,j})\mathsf{tr}_{d(j)}(b_j) \right\vert \\
& \leq \left(\sum_{j=1}^{N} \vert \lambda_{n,j}- \lambda_{\infty,j} \vert\right) \Vert b \Vert_{\A}.
\end{split}
\end{equation*}
By convergence in the product topology, $\left(\sum_{j=1}^{N} \vert \lambda_{n,j}- \lambda_{\infty,j} \vert\right)_{n \in \N} $ converges to $0$.  Hence, $\lim_{n \to \infty} \left\vert \tau^n(b) - \tau^\infty (b) \right\vert = 0$ and our result is proven.
\end{proof}
Next, we consider convergence of conditional expectations on finite-dimensional C*-algebras.  We note that in the hypothesis of Proposition (\ref{cond-cont}), we now impose that our family of tracial states are faithful.
\begin{proposition}\label{cond-cont}
Let $\B$ be a unital $C^*$-algebra. Let $\A$ be a finite-dimensional unital $C^*$-subalgebra of $\B$ such that $\A \cong  \oplus_{j=1}^N \alg{M}(n(j))$ for some $N \in\N$ and $n(1),\ldots,n(N) \in \N\setminus\{0\}$ with *-isomorphism $\alpha:  \oplus_{j=1}^N \alg{M}(n(j)) \rightarrow \A$.   Let $E$ be the set of matrix units for $ \oplus_{j=1}^N \alg{M}(n(j))$ of Notation (\ref{matrix-units}).

 If $\{\tau^n: \B \rightarrow \C \}_{n \in \overline{\N}} $ is a family of faithful tracial states, then  for all $n \in \overline{\N},b \in \B$:
\begin{equation*}
\mathds{E}^n (b)=\sum_{e \in E}\frac{\tau^n(\alpha(e^*) b)}{\tau^n(\alpha(e^\ast e))}\alpha(e), 
\end{equation*}
where $\mathds{E}^n : \B \rightarrow \A$ is the unique $\tau^n$-preserving conditional expectation \cite[Definition 1.5.9]{Brown-Ozawa}.

Furthermore, if $(\tau^n  )_{n \in \N}$ converges to $\tau^\infty  $ in the weak-* topology on $\StateSpace(\B)$, then the map: 
\begin{equation*}
(n,b) \in \overline{\N} \times \B \longmapsto \Vert b-\mathds{E}^n (b) \Vert_\B  \in \R,
\end{equation*}
 is continuous with respect to the product topology on  $  \overline{\N} \times \left(\B, \Vert \cdot \Vert_\B \right)$.
\end{proposition}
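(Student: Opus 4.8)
\emph{Proof proposal.} My plan is to follow the two-part structure of the statement. For the formula I would use a variational description of $\mathds{E}^n$ rather than checking directly that the asserted right-hand side is a unital completely positive idempotent. Since $\A$ is finite dimensional and $\tau^n$ is faithful, the $\tau^n$-preserving conditional expectation $\mathds{E}^n$ exists and is unique, and, being an $\A$-bimodule map (Tomiyama) with $\tau^n\circ\mathds{E}^n=\tau^n$, it satisfies $\tau^n(\alpha(e^\ast)\,\mathds{E}^n(b))=\tau^n(\mathds{E}^n(\alpha(e^\ast) b))=\tau^n(\alpha(e^\ast) b)$ for every $b\in\B$ and every matrix unit $e\in E$; conversely, if $x\in\A$ satisfies $\tau^n(ax)=\tau^n(ab)$ for all $a\in\A$, then taking $a=(x-\mathds{E}^n(b))^\ast$ and using faithfulness of $\tau^n$ gives $x=\mathds{E}^n(b)$. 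Since the $\alpha(e)$ span $\A$, this characterises $\mathds{E}^n(b)$ as the unique element of $\A$ with $\tau^n(\alpha(e^\ast)\,\mathds{E}^n(b))=\tau^n(\alpha(e^\ast)b)$ for all $e\in E$. First I would record that the right-hand side of the formula makes sense: for a matrix unit $e$ of the $k$-th block $\alpha(e^\ast e)=\alpha(e_{k,m,m})$ is a nonzero projection, so $\tau^n(\alpha(e^\ast e))>0$ by faithfulness, and by the trace property this number depends on $e$ only through its block. Then I would substitute the formula into the characterisation: in $\tau^n\bigl(\alpha(e^\ast)\sum_{f\in E}\tfrac{\tau^n(\alpha(f^\ast)b)}{\tau^n(\alpha(f^\ast f))}\alpha(f)\bigr)$ the term indexed by $f$ contributes $\tfrac{\tau^n(\alpha(f^\ast)b)}{\tau^n(\alpha(f^\ast f))}\tau^n(\alpha(e^\ast f))$, and the matrix-unit multiplication table shows $\tau^n(\alpha(e^\ast f))=0$ unless $f=e$, in which case it equals $\tau^n(\alpha(e^\ast e))$; the sum therefore collapses to $\tau^n(\alpha(e^\ast)b)$, so the formula indeed equals $\mathds{E}^n(b)$.

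For the continuity assertion I would first reduce the joint continuity of $(n,b)\mapsto\|b-\mathds{E}^n(b)\|_\B$ to the continuity of $n\mapsto\mathds{E}^n(b)$ for each fixed $b$. Conditional expectations are contractive, so for $n\in\overline{\N}$ and $b,b_0\in\B$ one has $\bigl|\,\|b-\mathds{E}^n(b)\|_\B-\|b_0-\mathds{E}^n(b_0)\|_\B\,\bigr|\le\|b-b_0\|_\B+\|\mathds{E}^n(b-b_0)\|_\B\le 2\|b-b_0\|_\B$, while $\bigl|\,\|b_0-\mathds{E}^n(b_0)\|_\B-\|b_0-\mathds{E}^{n_0}(b_0)\|_\B\,\bigr|\le\|\mathds{E}^n(b_0)-\mathds{E}^{n_0}(b_0)\|_\B$; combining these two estimates yields joint continuity at $(n_0,b_0)$ as soon as $n\mapsto\mathds{E}^n(b_0)$ is continuous on $\overline{\N}$.

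It remains to prove that last continuity. Weak* convergence $(\tau^n)_{n\in\N}\to\tau^\infty$ is precisely the statement that $n\in\overline{\N}\mapsto\tau^n(y)\in\C$ is continuous for every $y\in\B$. Applying this to the finitely many elements $y=\alpha(e^\ast)b_0$ and $y=\alpha(e^\ast e)$ for $e\in E$, and noting that $n\mapsto\tau^n(\alpha(e^\ast e))$ is continuous and everywhere positive on the compact space $\overline{\N}$, hence bounded below by a positive constant, the scalars $\tfrac{\tau^n(\alpha(e^\ast)b_0)}{\tau^n(\alpha(e^\ast e))}$ depend continuously on $n\in\overline{\N}$. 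Since $E$ is finite and $\mathds{E}^n(b_0)$ is the corresponding finite linear combination of the fixed elements $\alpha(e)$, the map $n\mapsto\mathds{E}^n(b_0)$ is norm-continuous on $\overline{\N}$, which completes the argument.

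The only genuinely delicate points are the bookkeeping with the matrix-unit multiplication table in the verification of the formula, and making sure that faithfulness of the entire family $\{\tau^n\}_{n\in\overline{\N}}$ is invoked in each of the three places it is needed: well-definedness of the formula, uniqueness in the variational characterisation, and strict positivity of the limiting denominators $\tau^\infty(\alpha(e^\ast e))$. Note that Lemma~\ref{trace-conv-lemma} is not required here, since we use weak* convergence of $(\tau^n)$ on all of $\B$ rather than merely on $\A$.
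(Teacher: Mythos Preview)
Your proposal is correct and, for the continuity portion, follows essentially the same route as the paper: both reduce joint continuity to the estimate $\bigl|\,\|b-\mathds{E}^n(b)\|_\B-\|b'-\mathds{E}^m(b')\|_\B\,\bigr|\le 2\|b-b'\|_\B+\|\mathds{E}^n(b')-\mathds{E}^m(b')\|_\B$ via contractivity of conditional expectations, and then establish $\mathds{E}^n(b')\to\mathds{E}^\infty(b')$ by applying weak* convergence termwise in the finite sum, using faithfulness to keep the denominators $\tau^n(\alpha(e^\ast e))$ bounded away from zero. The one genuine difference is in handling the formula itself: the paper simply cites \cite[Section~4.1, Expression~4.1]{AL} for the explicit expression of $\mathds{E}^n$, whereas you give a self-contained verification via the variational characterisation $\tau^n(\alpha(e^\ast)x)=\tau^n(\alpha(e^\ast)b)$ for all $e\in E$ together with the matrix-unit orthogonality $\tau^n(\alpha(e^\ast f))=\delta_{e,f}\,\tau^n(\alpha(e^\ast e))$. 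Your argument is more informative on this point, and the bookkeeping you flag as delicate is handled correctly.
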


\begin{proof}For $n \in \overline{\N}$, by \cite[Section 4.1]{AL}  and \cite[Expression 4.1]{AL}, we have that for each $n \in \overline{\N}$: 
\begin{equation*}
\mathds{E}^n (b)=\sum_{e \in E}\frac{\tau^n(\alpha(e)^\ast b)}{\tau^n(\alpha(e^\ast e))}\alpha(e)
\end{equation*}
since $\tau^n$ is a faithful tracial state on $\B$. 
By faithfulness, for $e \in e_\A$ and $b \in \B$,
we have $\lim_{n \to \infty}\tau^n(\alpha(e^\ast e))=\tau^\infty(\alpha(e^\ast e ))> 0$
by weak-* convergence. Since our sum is finite by finite dimensionality, again by weak-* convergence:
\begin{equation}\label{weak-tau}
\lim_{n \to \infty}\sum_{e \in E}\frac{\tau^n(\alpha(e)^\ast b)}{\tau^n(\alpha(e^\ast e))}=\sum_{e \in E}\frac{\tau^\infty(\alpha(e)^\ast b)}{\tau^\infty(\alpha(e^\ast e))}
\end{equation}
Furthermore, if we let $C=\max_{e \in E} \left\{ \Vert \alpha(e)\Vert_\B \right\}$, then:
\begin{equation*}
\begin{split}
\left\Vert \mathds{E}^n (b) - \mathds{E} (b) \right\Vert_\B =&\left\Vert \sum_{e \in E}\frac{\tau^n(\alpha(e)^\ast b)}{\tau^n(\alpha(e^\ast e))}\alpha(e)
 - \sum_{e \in E}\frac{\tau^\infty(\alpha(e)^\ast b)}{\tau^\infty(\alpha(e^\ast e))}\alpha(e)\right\Vert_\B \\
 & =   \left\Vert \sum_{e \in E}\left(\frac{\tau^n(\alpha(e)^\ast b)}{\tau^n(\alpha(e^\ast e))}- \frac{\tau^\infty(\alpha(e)^\ast b)}{\tau^\infty(\alpha(e^\ast e))}\right)\alpha(e)
\right\Vert_\B \\
& \leq     \sum_{e \in E}\left\Vert \left(\frac{\tau^n(\alpha(e)^\ast b)}{\tau^n(\alpha(e^\ast e))}- \frac{\tau^\infty(\alpha(e)^\ast b)}{\tau^\infty(\alpha(e^\ast e))}\right)\alpha(e)
\right\Vert_\B  \\
& = \sum_{e \in E}\left\vert\frac{\tau^n(\alpha(e)^\ast b)}{\tau^n(\alpha(e^\ast e))}- \frac{\tau^\infty(\alpha(e)^\ast b)}{\tau^\infty(\alpha(e^\ast e))}\right\vert \left\Vert \alpha(e)
\right\Vert_\B \\
&\leq \left(\sum_{e \in E} \left\vert  \frac{\tau^n(\alpha(e)^\ast b)}{\tau^n(\alpha(e^\ast e))}-\frac{\tau^\infty(\alpha(e)^\ast b)}{\tau^\infty(\alpha(e^\ast e))}\right\vert \right)C, 
\end{split}
\end{equation*}
and  $\lim_{n \to \infty} \Vert \mathds{E}^n(b) - \mathds{E}(b) \Vert_\B =0$  by Expression (\ref{weak-tau}).

Fix $n,m \in \overline{\N}$ and $b,b' \in \B$.  Then, as conditional expectations are contractive \cite[1.5.10]{Brown-Ozawa}: 
\begin{equation*}
\begin{split}
\left\vert \left\Vert b-\mathds{E}^n(b) \right\Vert_\B - \left\Vert b'-\mathds{E}^m (b') \right\Vert_\B \right\vert & \leq \left\Vert (b-\mathds{E}^n(b)) - (b'- \mathds{E}^m (b')) \right\Vert_\B \\
& \leq  \left\Vert \mathds{E}^n(b) - \mathds{E}^n(b') +\mathds{E}^n(b')- \mathds{E}^m (b') \right\Vert_\B \\
& \quad +  \left\Vert b-b' \right\Vert_\B \\
& \leq 2 \left\Vert b-b' \right\Vert_\B +\left\Vert \mathds{E}^n(b')- \mathds{E}^m (b') \right\Vert_\B , 
\end{split}
\end{equation*}
and continuity follows.
\end{proof}
We now introduce an appropriate notion of merging  inductive sequences together in Definition (\ref{coll-def}).
\begin{definition}\label{coll-def} We consider 2 cases of inductive sequences in this definition.
\begin{case}Closure of union
\end{case}
For each $k \in \overline{\N}$, let $\A^k$ be a C*-algebras with  $\A^k = \overline{\cup_{n \in \N} \A_{k,n} }^{\Vert \cdot \Vert_{\A^k}}$ such that $\mathcal{U}^k = \left(\A_{k,n}\right)_{n \in \N}$ is a non-decreasing sequence of  C*-subalgebras of $\A^k$, then we say $\{\A^k : k \in \overline{\N}\}$ is a {\em fusing family} if: 
\begin{enumerate} 
\item There exists  $(c_n)_{n \in \N} \subseteq \N$ non-decreasing such that $\lim_{n \to \infty} c_n=\infty$, and
\item for all $N \in \N $, if $k \in \N_{\geq c_N}$, then $\A_{k,n} = \A_{\infty,n}$ for all $n \in \{0,1,\ldots, N\}.$
\end{enumerate}

\begin{case}Inductive limit
\end{case}
For each $k \in \overline{\N}$, let  $\mathcal{I}(k)=(\A_{k,n}, \alpha_{k,n})_{n \in \N}$ be an inductive sequence with inductive limit, $\A^k$.  We say that the family of $C^*$-algebras $\{ \A^k : k \in \overline{\N} \}$ is an {\em IL-fusing family} of $C^*$-algebras if:
\begin{enumerate} 
\item There exists  $(c_n)_{n \in \N} \subseteq \N$ non-decreasing such that $\lim_{n \to \infty} c_n=\infty$, and
\item for all $N \in \N $, if $k \in \N_{\geq c_N}$, then $(\A_{k,n}, \alpha_{k,n}) = (\A_{\infty,n}, \alpha_{\infty,n})$ for all $n \in \{0,1,\ldots, N\}.$
\end{enumerate}

In either case, we call the sequence  $(c_n)_{n \in \N} $ the {\em fusing sequence}.
\end{definition}

\begin{remark}
The results in this section are phrased in terms of {\em IL-fusing families} since our propinquity convergence results are all in terms of inductive limits.  But, we note that all the results of this section are valid for the closure of union case as well with appropriate translations, but most convergence results are convergence results are more easily fulfilled in the inductive limit case.   Note that any IL-fusing family may be viewed as a fusing family via the canonical *-homomorphisms of Notation (\ref{ind-lim}), which is why we don't decorate the term {\em fusing family} in the closure of union case. 
\end{remark}

Hypotheses (2) and (3) in the following Lemma (\ref{lip-cont}) introduce the remaining notions of convergence that together with fusing families will imply convergence of quantum propinquity of AF algebras in Theorem (\ref{af-cont}).  Indeed, (2) is simply an appropriate use of weak-* convergence for the faithful tracial states in relation to fusing families, and (3)  is an appropriate use of pointwise convergence of the sequences that provide convergence of the finite dimensional subspaces in Theorem (\ref{AF-lip-norms-thm}).  

Furthermore, Lemma (\ref{lip-cont}) provides that the Lip-norms induced on the finite dimensional subspaces form a continuous field of Lip-norms, a notion introduced by Rieffel in \cite{Rieffel00}.
\begin{lemma}\label{lip-cont} For each $k \in \overline{\N}$, let  $\mathcal{I}(k)=(\A_{k,n}, \alpha_{k,n})_{n \in \N}$ be an inductive sequence of finite dimensional $C^*$-algebras  with $C^*$-inductive limit $\A^k ,$ such that $\A_{k,0}=\A_{k',0} \cong \C$ and $\alpha_{k,n}$ is  unital and injective for all $k ,k' \in \overline{\N}, n\in\N$.  If:
\begin{enumerate}
\item $\{ \A^k : k \in \overline{\N} \}$ is an IL-fusing family with fusing sequence $(c_n)_{n \in \N}$,
\item $\{\tau^k : \A^k \rightarrow \C \}_{k \in \overline{\N}}$ is a family of faithful tracial states such that for each $N \in \N$, we have that $\left(\tau^k \circ \indmor{\alpha_k}{N}\right)_{k \in \N_{\geq c_N}}$ converges to $\tau^\infty \circ \indmor{\alpha_\infty}{N}$ in the weak-* topology on $\StateSpace(\A_{\infty,N})$, and
\item $\{\beta^k : \overline{\N} \rightarrow (0,\infty) \}_{k \in \overline{\N}}$ is a family of convergent sequences such that for all $N\in \N $ if $k \in \N_{\geq c_N}$, then  $\beta^k(n)=\beta^\infty(n)$ for all $n \in \{0,1,\ldots,N\}$ and there exists $B: \overline{\N} \rightarrow (0,\infty)$ with $B(\infty)=0$ and $\beta^m(l) \leq B(l)$ for all $m,l \in \overline{\N}$,
\end{enumerate}
then for all $N \in \N$, if $n \in \{0,1, \ldots, N\}$, then the map:
\begin{equation*}\mathsf{l}^N_n : (k,a) \in \overline{\N}_{\geq c_N} \times \A_{\infty,n} \longmapsto \Lip^{\beta^k}_{\mathcal{I}(k),\tau^k}\circ \indmor{\alpha_k}{n} (a) \in \R
\end{equation*}
is well-defined and continuous with respect to the product topology on \\ $\overline{\N} \times \left(\A_{\infty,n},\Vert \cdot \Vert_{ \A_{\infty,n}}\right)$, where $\Lip^{\beta^k}_{\mathcal{I}(k),\tau^k}$ is given by Theorem (\ref{AF-lip-norms-thm}).
\end{lemma}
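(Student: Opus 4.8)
The plan is to show that, once $N\in\N$ and $n\in\{0,\ldots,N\}$ are fixed, the value $\mathsf{l}^N_n(k,a)$ is the maximum of \emph{finitely many} expressions, each of which is a fixed constant times the norm of a ``defect of a conditional expectation'' that can be computed on the fixed finite-dimensional algebra $\A_{\infty,n}$, and then to apply Proposition~(\ref{cond-cont}) to each of these. For the reduction, note first that for $k\in\overline{\N}_{\geq c_N}$ the IL-fusing hypothesis~(1) gives $\A_{k,n}=\A_{\infty,n}$ (as $n\le N$), so $\indmor{\alpha_k}{n}(a)$ makes sense for $a\in\A_{\infty,n}$, lies in $\bigcup_{p\in\N}\indmor{\alpha_k}{p}(\A_{k,p})$, and hence is in the domain of $\Lip^{\beta^k}_{\mathcal{I}(k),\tau^k}$. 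Moreover for $p\ge n$ we have $\indmor{\alpha_k}{n}(a)=\indmor{\alpha_k}{p}\bigl(\alpha_{k,p-1}\circ\cdots\circ\alpha_{k,n}(a)\bigr)\in\indmor{\alpha_k}{p}(\A_{k,p})$, so the $p$-th term of the supremum defining $\Lip^{\beta^k}_{\mathcal{I}(k),\tau^k}$ vanishes; using hypothesis~(1) again ($\A_{k,m}=\A_{\infty,m}$ and the connecting maps agree up to level $N$) and hypothesis~(3) ($\beta^k(m)=\beta^\infty(m)$ for $m\le N$), I get, for all $k\in\overline{\N}_{\geq c_N}$ and $a\in\A_{\infty,n}$,
\begin{equation*}
\mathsf{l}^N_n(k,a)=\max_{m\in\{0,\ldots,n-1\}}\frac{\bigl\|\indmor{\alpha_k}{n}(a)-\CondExp{\indmor{\alpha_k}{n}(a)}{\indmor{\alpha_k}{m}(\A_{\infty,m})}\bigr\|_{\A^k}}{\beta^\infty(m)},
\end{equation*}
with the convention that the empty maximum (case $n=0$) is $0$; in particular $\mathsf{l}^N_n$ is finite-valued, hence well-defined.

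Second, I would transport each term to the fixed algebra $\A_{\infty,n}$. Fix $m\in\{0,\ldots,n-1\}$, set $\theta:=\alpha_{\infty,n-1}\circ\cdots\circ\alpha_{\infty,m}:\A_{\infty,m}\to\A_{\infty,n}$ (a $*$-monomorphism independent of $k$), and put $\sigma^k:=\tau^k\circ\indmor{\alpha_k}{n}$, which is a faithful tracial state on $\A_{\infty,n}$ (faithfulness of $\tau^k$ together with injectivity of $\indmor{\alpha_k}{n}$, the latter because every $\alpha_{k,j}$ is injective). Let $\mathds{E}^k_m:\A_{\infty,n}\to\theta(\A_{\infty,m})$ be the $\sigma^k$-preserving conditional expectation. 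Since $\indmor{\alpha_k}{m}=\indmor{\alpha_k}{n}\circ\theta$ (so $\indmor{\alpha_k}{m}(\A_{\infty,m})=\indmor{\alpha_k}{n}(\theta(\A_{\infty,m}))$) and $\tau^k\circ\indmor{\alpha_k}{m}=\sigma^k\circ\theta$, the restriction of $\CondExp{\cdot}{\indmor{\alpha_k}{m}(\A_{\infty,m})}$ to the finite-dimensional subalgebra $\indmor{\alpha_k}{n}(\A_{\infty,n})$ is, by the tower property and uniqueness of trace-preserving conditional expectations, the $\tau^k$-preserving conditional expectation onto $\indmor{\alpha_k}{n}(\theta(\A_{\infty,m}))$, which under the $*$-isomorphism $\indmor{\alpha_k}{n}$ corresponds to $\mathds{E}^k_m$. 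Hence $\CondExp{\indmor{\alpha_k}{n}(a)}{\indmor{\alpha_k}{m}(\A_{\infty,m})}=\indmor{\alpha_k}{n}(\mathds{E}^k_m(a))$, and since $\indmor{\alpha_k}{n}$ is an isometric $*$-monomorphism, the $m$-th term of the maximum above equals $\beta^\infty(m)^{-1}\bigl\|a-\mathds{E}^k_m(a)\bigr\|_{\A_{\infty,n}}$.

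Third, I would invoke Proposition~(\ref{cond-cont}) for the family $\{\sigma^k\}_{k\in\overline{\N}_{\geq c_N}}$ of faithful tracial states on $\B:=\A_{\infty,n}$ and the finite-dimensional unital subalgebra $\theta(\A_{\infty,m})$. Faithfulness is noted above; for the weak-* convergence hypothesis, observe that for $k\ge c_N$ hypothesis~(1) gives $\indmor{\alpha_k}{n}=\indmor{\alpha_k}{N}\circ\bigl(\alpha_{\infty,N-1}\circ\cdots\circ\alpha_{\infty,n}\bigr)$, so $\sigma^k$ is the composition of the sequence $\tau^k\circ\indmor{\alpha_k}{N}$ --- which converges weak-* to $\tau^\infty\circ\indmor{\alpha_\infty}{N}$ by hypothesis~(2) --- with the fixed unital $*$-homomorphism $\alpha_{\infty,N-1}\circ\cdots\circ\alpha_{\infty,n}$, hence $\sigma^k\to\sigma^\infty$ weak-* on $\StateSpace(\A_{\infty,n})$. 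As $\overline{\N}_{\geq c_N}$ is itself the one-point compactification of a countable discrete set, Proposition~(\ref{cond-cont}) applies and gives that $(k,a)\mapsto\|a-\mathds{E}^k_m(a)\|_{\A_{\infty,n}}$ is continuous on $\overline{\N}_{\geq c_N}\times\A_{\infty,n}$. Dividing by the constant $\beta^\infty(m)$ and taking the maximum over the finite index set $\{0,\ldots,n-1\}$ preserves continuity, which by the first step is exactly the asserted continuity of $\mathsf{l}^N_n$.

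I expect the one genuinely delicate point to be the identification in the second step: the conditional expectation $\CondExp{\cdot}{\indmor{\alpha_k}{m}(\A_{k,m})}$ appearing in $\Lip^{\beta^k}_{\mathcal{I}(k),\tau^k}$ lives in the \emph{varying} inductive limit $\A^k$, and it must be faithfully matched with a conditional expectation on the single finite-dimensional algebra $\A_{\infty,n}$ to which Proposition~(\ref{cond-cont}) applies. This rests on the IL-fusing hypothesis (to make connecting maps and subalgebra images agree up to level $N$), on injectivity of the canonical maps $\indmor{\alpha_k}{n}$, and on the uniqueness and tower property of trace-preserving conditional expectations. Everything else --- collapsing the supremum to a finite maximum, checking the hypotheses of Proposition~(\ref{cond-cont}), and stability of continuity under finite maxima --- is routine.
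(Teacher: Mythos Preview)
Your proposal is correct and follows essentially the same route as the paper: reduce the supremum to a finite maximum over $m\in\{0,\ldots,n-1\}$, transport each term to the fixed algebra $\A_{\infty,n}$ via the isometric embedding $\indmor{\alpha_k}{n}$, verify weak-* convergence of $\sigma^k=\tau^k\circ\indmor{\alpha_k}{n}$ (this is the paper's Claim~(\ref{lower-space})), and then invoke Proposition~(\ref{cond-cont}). The only cosmetic difference is that the paper carries out the transport step by writing out the explicit matrix-unit formula for the conditional expectation and pushing the canonical maps through term by term, whereas you argue abstractly via uniqueness and the tower property of trace-preserving conditional expectations; both accomplish the same identification.
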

\begin{proof} First, we establish a weak-* convergence result implied by  (2). Let $N \in \overline{\N}$.
\begin{claim}\label{lower-space} $\left(\tau^k \circ \indmor{\alpha_k}{m}\right)_{k \in \N_{\geq c_N}}$ converges to $\tau^\infty \circ \indmor{\alpha_\infty}{m}$ in the weak* topology on $\StateSpace(\A_{\infty,m})$ for each $m \in \{0,1,\ldots, N\}$. 
\end{claim} \begin{proof}[Proof of claim]
Let $m \in \{0,1,\ldots, N\}$. The case $m=N$ is given by assumption. So, assume that $N \geq 1$ and $m \in \{0, \ldots , N-1\}$. Fix $a \in \A_{\infty,m}$,  we have by definition of inductive limit and IL-fusing family:
\begin{equation*} \tau^k \circ \indmor{\alpha_k}{m}(a)=\tau^k \circ \indmor{\alpha_k}{N}(\alpha_{k,N-1} \circ \cdots \circ \alpha_{k,m} (a))=\tau^k \circ \indmor{\alpha_k}{N}(\alpha_{\infty,N-1} \circ \cdots \circ \alpha_{\infty,m} (a))
\end{equation*} for  $k \in \overline{\N}_{\geq c_N}$,  which proves our claim since $\left(\tau^k \circ \indmor{\alpha_k}{N}\right)_{k \in \N_{\geq c_N}}$ converges to $\tau^\infty \circ \indmor{\alpha_\infty}{N}$ in the weak* topology on $\StateSpace(\A_{\infty,N})$. \end{proof}

Next, we establish a more explicit form of our Lip-norms on the finite-dimensional subspaces. Fix $N \in \N$ and $n \in  \{0,1, \ldots, N\}$.  $\mathsf{l}^N_n$ is well-defined by definition of a IL-fusing family.  Furthermore, as $ \CondExp{\cdot}{\indmor{\alpha_{k}}{j}(\A_{k,j})}$ is a conditional expectation for all $k \in \overline{\N}, j \in \N$, we have that: 
\begin{equation*}\CondExp{\indmor{\alpha_{k}}{n}(a)}{\indmor{\alpha_{k}}{j}(\A_{k,j})}=\indmor{\alpha_{k}}{n}(a)
\end{equation*} for $j \geq n, a \in \A_{\infty,n}$ by \cite[Definition 1.5.9]{Brown-Ozawa}.

Therefore:
\begin{equation}\label{fd-lip}
\mathsf{l}^N_n(k,a)=\max \left\{ \frac{\left\Vert \indmor{\alpha_{k}}{n}(a) -   \CondExp{\indmor{\alpha_{k}}{n}(a)}{\indmor{\alpha_{k}}{m}(\A_{\infty,m})}\right\Vert_{\A^k}}{\beta^\infty(m)} : m \in \{0,\ldots,n-1\}\right\}.
\end{equation}

Fix $m \in \{0,\ldots, n-1\}, k\geq N, a \in \A_{\infty,n}$.  Since $\A_{\infty,m}$ is finite dimensional, the C*-algebra $\A_{\infty,m} \cong  \oplus_{j=1}^N \alg{M}(n(j))$ for some $N \in\N$ and $n(1),\ldots,n(N) \in \N\setminus\{0\}$ with *-isomorphism $\gamma:  \oplus_{j=1}^N \alg{M}(n(j)) \rightarrow \A_{\infty,m}$.   Let $E$ be the set of matrix units for $ \oplus_{j=1}^N \alg{M}(n(j))$. Now, define $\alpha_{k,m\to n-1}= \alpha_{k,n-1} \circ \cdots \circ \alpha_{k,m}$, and by definition of IL-fusing family, we have that $\alpha_{k,m\to n-1} =\alpha_{\infty,m \to n-1}$. Therefore, by the definition of inductive limit and  Proposition (\ref{cond-cont}):  
\begin{equation}\label{lip-limit}
\begin{split}
& \left\Vert \indmor{\alpha_{k}}{n}(a) -   \CondExp{\indmor{\alpha_{k}}{n}(a)}{\indmor{\alpha_{k}}{m}(\A_{\infty,m})}\right\Vert_{\A^k} \\
& = \Bigg\Vert \indmor{\alpha_{k}}{n}(a) -\sum_{e \in E} \frac{\tau^k(\indmor{\alpha_{k}}{m} \circ \gamma(e^*) \indmor{\alpha_{k}}{n}(a))}{\tau^k(\indmor{\alpha_{k}}{m} \circ \gamma(e^\ast e))}\indmor{\alpha_{k}}{m} \circ \gamma(e) \Bigg\Vert_{\A^k}\\
& = \Bigg\Vert \indmor{\alpha_{k}}{n}(a)  - \sum_{e \in E} \frac{\tau^k(\indmor{\alpha_{k}}{n}(\alpha_{k,m\to n-1}(\gamma(e)^\ast) a))}{\tau^k(\indmor{\alpha_{k}}{n}(\alpha_{k,m\to n-1}(\gamma(e^\ast e)))}\indmor{\alpha_{k}}{n}(\alpha_{k,m\to n-1}(\gamma(e))) \Bigg\Vert_{\A^k}\\
& =\Bigg\Vert \indmor{\alpha_{k}}{n}\Bigg(a - \sum_{e \in E} \frac{\tau^k(\indmor{\alpha_{k}}{n}(\alpha_{\infty,m\to n-1}(\gamma(e)^\ast) a))}{\tau^k(\indmor{\alpha_{k}}{n}(\alpha_{\infty,m\to n-1}(\gamma(e^*e)))}\alpha_{\infty,m\to n-1}(\gamma(e))\Bigg) \Bigg\Vert_{\A^k}\\
& = \Bigg\Vert a - \sum_{e \in E} \frac{\tau^k\circ \indmor{\alpha_{k}}{n}(\alpha_{\infty,m\to n-1}\circ \gamma(e^*) a)}{\tau^k\circ \indmor{\alpha_{k}}{n}(\alpha_{\infty,m\to n-1}\circ \gamma(e^\ast e))}\alpha_{\infty,m\to n-1}\circ \gamma(e) \Bigg\Vert_{\A_{\infty,n}}
\end{split}
\end{equation} 
Hence, by Claim (\ref{lower-space}) and Proposition (\ref{cond-cont}), the map: 
\begin{equation*}
(k,a) \in  \overline{\N}_{\geq c_N} \times \A_{\infty,n} \mapsto \frac{\left\Vert \indmor{\alpha_{k}}{n}(a) -   \CondExp{\indmor{\alpha_{k}}{n}(a)}{\indmor{\alpha_{k}}{m}(\A_{\infty,m})}\right\Vert_{\A^k}}{\beta^\infty(m)} \in \R
\end{equation*}
is continuous for each $m \in \{0, \ldots, n-1\}$.  As the maximum of finitely many continuous real-valued functions is continuous, our lemma is proven by Expression (\ref{fd-lip}). 
\end{proof}

The proof of the following Theorem (\ref{fd-cont}) follows similarly as the proof of {\cite[Lemma 5.13]{AL}}, but we include the proof for convenience and clarity.  This Theorem (\ref{fd-cont}) establishes convergence of the finite dimensional subspaces.    

\begin{theorem}\label{fd-cont}For each $k \in \overline{\N}$, let  $\mathcal{I}(k)=(\A_{k,n}, \alpha_{k,n})_{n \in \N}$ be an inductive sequence of finite dimensional $C^*$-algebras  with $C^*$-inductive limit $\A^k ,$ such that $\A_{k,0} =\A_{k',0} \cong \C$ and $\alpha_{k,n}$ is unital and injective for all $k ,k' \in \overline{\N}, n\in\N$.  If:
\begin{enumerate}
\item $\{ \A^k : k \in \overline{\N} \}$ is an IL-fusing family with fusing sequence $(c_n)_{n \in \N}$,
\item $\{\tau^k : \A^k \rightarrow \C \}_{k \in \overline{\N}}$ is a family of faithful tracial states such that for each $N \in \N$, we have that $\left(\tau^k \circ \indmor{\alpha_k}{N}\right)_{k \in \N_{\geq c_N}}$ converges to $\tau^\infty \circ \indmor{\alpha_\infty}{N}$ in the weak-* topology on $\StateSpace(\A_{\infty,N})$, and
\item $\{\beta^k : \overline{\N} \rightarrow (0,\infty) \}_{k \in \overline{\N}}$ is a family of convergent sequences such that for all $N\in \N $ if $k \in \N_{\geq c_N}$, then  $\beta^k(n)=\beta^\infty(n)$ for all $n \in \{0,1,\ldots,N\}$ and there exists $B: \overline{\N} \rightarrow (0,\infty)$ with $B(\infty)=0$ and $\beta^m(l) \leq B(l)$ for all $m,l \in \overline{\N}$,
\end{enumerate}
then for every $N \in \N$ and $n \in \{0,\ldots , N\}$, the map: \begin{equation*}
F^{N}_{n}: k \in \overline{\N}_{\geq c_N} \longmapsto \left(\A_{k, n} ,\Lip^{\beta^k}_{\mathcal{I}(k), \tau^k} \circ \indmor{\alpha_k}{n}\right) \in (\mathcal{QQCMS}_{2,0}, \qpropinquity{}) 
\end{equation*}
is well-defined and continuous, and therefore:
\begin{equation*}\lim_{k \to \infty} \qpropinquity{} \left(\left(\A_{k, n} ,\Lip^{\beta^k}_{\mathcal{I}(k), \tau^k} \circ \indmor{\alpha_k}{n}\right),\left(\A_{\infty, n} ,\Lip^{\beta^\infty}_{\mathcal{I}(\infty), \tau^\infty} \circ \indmor{\alpha_\infty}{n}\right)\right)=0,
\end{equation*}
where $\Lip^{\beta^k}_{\mathcal{I}(k), \tau^k}$ is given by Theorem (\ref{AF-lip-norms-thm}).
\end{theorem}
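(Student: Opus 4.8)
The plan is to exploit the IL-fusing hypothesis to collapse $F^N_n$ to a family of Lip-norms on a single fixed finite-dimensional C*-algebra, and then to control the quantum propinquity between them with the trivial identity bridge whose pivot is the unit. Fix $N \in \N$ and $n \in \{0,\dots,N\}$; the case $n=0$ is trivial, since then $\A_{\infty,0} \cong \C$ and the induced seminorm is $0$, so assume $n \geq 1$. For every $k \in \overline{\N}_{\geq c_N}$, condition~(1) of an IL-fusing family gives $\A_{k,n}=\A_{\infty,n}$ literally, and by Theorem~(\ref{AF-lip-norms-thm}) together with Lemma~(\ref{lip-cont}) the object $F^N_n(k)$ equals $(\A_{\infty,n},\Lip_k)$, where $\Lip_k := \mathsf{l}^N_n(k,\cdot) = \Lip^{\beta^k}_{\mathcal{I}(k),\tau^k}\circ\indmor{\alpha_k}{n}$ is a $(2,0)$-quasi-Leibniz Lip-norm on the \emph{fixed} finite-dimensional C*-algebra $\A_{\infty,n}$; this proves well-definedness. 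Since each natural number is isolated in $\overline{\N}$, continuity of $F^N_n$ reduces to continuity at $\infty$, i.e.\ to $\lim_{k\to\infty}\qpropinquity{}\big((\A_{\infty,n},\Lip_k),(\A_{\infty,n},\Lip_\infty)\big)=0$, which is also the displayed limit. So everything comes down to comparing two Lip-norms on one finite-dimensional algebra that, by Lemma~(\ref{lip-cont}), depend jointly continuously on $(k,a)$.

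For this comparison I would use, for each $k \in \N_{\geq c_N}$, the bridge $\gamma_k = (\A_{\infty,n},\mathrm{id},\mathrm{id},\unit_{\A_{\infty,n}})$ from $(\A_{\infty,n},\Lip_k)$ to $(\A_{\infty,n},\Lip_\infty)$. Its pivot is the unit, so its $1$-level set is all of $\StateSpace(\A_{\infty,n})$ and both dual maps are the identity, hence its height vanishes; its bridge seminorm is $\bridgenorm{\gamma_k}{a,b}=\|a-b\|_{\A_{\infty,n}}$, so its reach is the usual Hausdorff distance in $\sa{\A_{\infty,n}}$ between the Lip-balls $\{a\in\sa{\A_{\infty,n}}:\Lip_k(a)\leq 1\}$ and $\{a\in\sa{\A_{\infty,n}}:\Lip_\infty(a)\leq 1\}$. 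By Theorem-Definition~(\ref{def-thm})(4),
\[
\qpropinquity{}\big((\A_{\infty,n},\Lip_k),(\A_{\infty,n},\Lip_\infty)\big)\ \leq\ \bridgelength{\gamma_k}{\Lip_k,\Lip_\infty}\ =\ \Haus{\|\cdot\|_{\A_{\infty,n}}}\big(\{\Lip_k\leq 1\},\{\Lip_\infty\leq 1\}\big),
\]
so it suffices to show this Hausdorff distance tends to $0$ as $k\to\infty$.

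The heart of the argument is the uniform comparison of these Lip-balls, which are unbounded (each contains $\R\unit_{\A_{\infty,n}}$), and it rests on two observations. First, since $n\geq 1$ the level-$0$ term of $\Lip_k$ is present, and because $\CondExp{\indmor{\alpha_k}{n}(a)}{\indmor{\alpha_k}{0}(\A_{\infty,0})}=\mu_k(a)\,\unit_{\A^k}$ (the $\tau^k$-preserving conditional expectation onto $\C\unit_{\A^k}$, with $\mu_k := \tau^k\circ\indmor{\alpha_k}{n}$) while $\beta^k(0)=\beta^\infty(0)$ for $k\geq c_N\geq c_0$, isometry of $\indmor{\alpha_k}{n}$ gives $\|a-\mu_k(a)\unit\|_{\A_{\infty,n}}\leq\beta^\infty(0)\,\Lip_k(a)$. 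Consequently $\{\Lip_k\leq 1\}=\big(\{\Lip_k\leq 1\}\cap\ker\mu_k\big)+\R\unit$, with the centred part contained in the fixed compact ball $K:=\{x\in\sa{\A_{\infty,n}}:\|x\|\leq\beta^\infty(0)\}$, and likewise for $\Lip_\infty$. Second, $(k,a)\mapsto\Lip_k(a)=\mathsf{l}^N_n(k,a)$ is continuous on the compact space $\overline{\N}_{\geq c_N}\times K$ by Lemma~(\ref{lip-cont}), so a routine compactness argument yields $\varepsilon_k := \sup_{x\in K}|\Lip_k(x)-\Lip_\infty(x)|\to 0$. Given $a$ with $\Lip_k(a)\leq 1$, write $a=a_0+s\unit$ with $a_0\in K$ and $\Lip_k(a_0)\leq 1$; then $b_0 := a_0/\max\{1,\Lip_\infty(a_0)\}$ satisfies $\Lip_\infty(b_0)\leq 1$, and from $\Lip_\infty(a_0)\leq\Lip_k(a_0)+\varepsilon_k\leq 1+\varepsilon_k$ we obtain $\|a_0-b_0\|\leq\|a_0\|(\Lip_\infty(a_0)-1)^{+}\leq\beta^\infty(0)\varepsilon_k$, so $b_0+s\unit$ witnesses $\dist(a,\{\Lip_\infty\leq 1\})\leq\beta^\infty(0)\varepsilon_k$; the symmetric estimate is identical. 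Hence the Hausdorff distance above is at most $\beta^\infty(0)\varepsilon_k\to 0$, which finishes the proof.

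I expect the only genuine difficulty to be this last paragraph: one must at once locate a single compact region capturing every Lip-ball modulo $\R\unit$ --- which is exactly where the level-$0$ term and the fusing condition on the $\beta^k$ are used --- and upgrade the joint continuity of Lemma~(\ref{lip-cont}) to uniform convergence on that region. Everything else is the standard identity-bridge computation together with Theorem-Definition~(\ref{def-thm})(4), and the whole argument transcribes to the closure-of-union case of Definition~(\ref{coll-def}) through the canonical *-homomorphisms of Notation~(\ref{ind-lim}).
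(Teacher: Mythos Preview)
Your proposal is correct and follows the same overall strategy as the paper: reduce to a single finite-dimensional algebra via IL-fusing, use the identity bridge with unit pivot so height is zero, and control the reach by comparing the two Lip-balls modulo $\R\unit$ on a compact set via the joint continuity from Lemma~(\ref{lip-cont}). The execution differs in one place. The paper picks an arbitrary linear complement $\alg{W}$ of $\R\unit_{\A_{\infty,n}}$ in $\sa{\A_{\infty,n}}$, works on its (compact) unit sphere $\alg{S}$, extracts a strictly positive minimum $m_S=\min_{\overline{\N}_{\geq c_N}\times\alg{S}}\mathsf{l}^N_n>0$ from the fact that Lip-norms vanish only on scalars, and rescales by the ratio $\mathsf{l}^N_n(\infty,a)/\mathsf{l}^N_n(m,a)$; the $m_S$ is what keeps this ratio under control. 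You instead exploit the level-$0$ term of the Lip-norm to land the centred Lip-ball in the fixed norm-ball $K=\{\|x\|\leq\beta^\infty(0)\}$ and rescale by $1/\max\{1,\Lip_\infty(a_0)\}$, which sidesteps the positive-minimum argument entirely. Your route leans on the specific AF Lip-norm structure (the $n=0$ summand) while the paper's leans only on the abstract property $\Lip^{-1}(0)=\R\unit$; both yield the same Hausdorff-distance bound and the same conclusion.
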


\begin{proof}
Fix $N \in \N$ and $n \in \{0, \ldots , N\}$.  If $n=0$, then $\A_{k,0}=\A_{\infty,0} \cong \C$ and since Lip-norms vanish only on scalars by Definition (\ref{quasi-Monge-Kantorovich-def}), the map $F^{N}_{0}$ is constant up to quantum isometry and therefore continuous.  

Assume that $n \in \{1, \ldots, N \}$ and $k \geq c_N$.  Set $\B_n = \A_{k,n}=\A_{\infty,n}$ by definition of IL-fusing family.  Let $\alg{W}$ be any complementary subspace of $\R\unit_\A$ in $\sa{\B_n}$ --- which exists since $\sa{\B_n}$ is finite dimensional. We shall denote by $\alg{S}$ the unit sphere $\{ a\in \alg{W} : \|a\|_{\B_n} = 1 \}$ in $\alg{W}$. Note that since $\alg{W}$ is finite dimensional, $\alg{S}$ is a compact set.  Set $S=\overline{\N}_{\geq c_N} \times \alg{S}$, which is compact in the product topology.  Therefore, since the function $\mathsf{I}^N_n $ is continuous by Lemma (\ref{lip-cont}), it reaches a minimum on $S$. Thus, there exists $(K,c) \in S$ such that: $\min_{s \in S}\mathsf{l}^N_n(s) = \mathsf{l}^N_n(K,c)$. In particular, since Lip-norms are zero only on the scalars, we have $\mathsf{l}^N_n(K,c) > 0$ as $\|c\|_{\alg{W}} = 1$ yet the only scalar multiple of $\unit_{\B_n}$ in $\alg{W}$ is $0$. We denote $m_S = \mathsf{l}^N_n(K,c) > 0$ in the rest of this proof.

Moreover, the function $\mathsf{l}^N_n$ is continuous on the compact set $S$, and thus, it is uniformly continuous with respect to any metric that metrizes the product topology.  In particular, consider the max metric, denoted by $\mathsf{m}$, with respect to the norm on $\alg{S}$ and the metric on $\overline{\N}$ defined by $\mathsf{d_A}(n,m)=\vert 1/(n+1) - 1/(m+1) \vert$ for all $n,m \in \overline{\N}$ with the convention that $1/(\infty +1)=0$, where the metric $\mathsf{d_A}$ metrizes the topology on $\overline{\N}$.

Let $\varepsilon > 0$. As $\mathsf{l}^N_n$ is uniformly continuous on the metric space $(S, \mathsf{m})$, there exists $\delta>0 $ such that if $\mathsf{m}(s,s')<\delta$, then $|\mathsf{l}^N_n(s) - \mathsf{l}^N_n(s')| \leq m_S^2 \varepsilon$. Now, there exsits $M \in \N_{\geq c_N}$ such that $1/M < \delta$.  Let $m \geq M$ and $a \in \alg{S}$, then by definition of the metrics $\mathsf{m}$ and $\mathsf{d_A}$: \begin{equation*}
\mathsf{m}((m,a),(\infty,a))=1/(m+1) < 1/m \leq 1/M< \delta.
\end{equation*}  Thus,  for all $m\geq M$ and for all $a \in \alg{S}$ we have:
\begin{equation*}
|\mathsf{l}^N_n(m, a) - \mathsf{l}^N_n(\infty,a)| \leq m_S^2 \varepsilon\text{.}
\end{equation*}

We then have, for all $a\in\alg{S}$ and $m\geq M$, since $\mathsf{l}^N_n$ is positive on $S$:
\begin{equation}\label{m-inf}
\begin{split}\left\| a - \frac{\mathsf{l}^N_n(m,a)}{\mathsf{l}^N_n(\infty,a)} a \right\|_{\B_n} &= \frac{|\mathsf{l}^N_n(m,a)-\mathsf{l}^N_n(\infty,a)|}{\mathsf{l}^N_n(\infty,a)} \|a\|_{\B_n} \\
&\leq \frac{\varepsilon m_S^2}{m_S} \leq m_S \varepsilon \text{.}
\end{split}
\end{equation}
Similarly:
\begin{equation}\label{af-theta-fd-lemma-eq2}
\left\| a - \frac{\mathsf{l}^N_n(\infty,a)}{\mathsf{l}^N_n(m,a)} a \right\|_{\B_n} \leq m_S \varepsilon \text{.}
\end{equation}

We are now ready to provide an estimate for the quantum propinquity. Let $m\geq M$ be fixed. Writing $\mathrm{id}$ for the identity of $\B_n$, the quadruple:
\begin{equation*}
\gamma = \left(\B_n, \unit_{\B_n}, \mathrm{id}, \mathrm{id}\right)
\end{equation*}
is a bridge in the sense of Definition (\ref{bridge-def}) from $\left(\B_n,\Lip^{\beta^m}_{\mathcal{I}(m),\tau^m}\circ \indmor{\alpha_m}{n}\right)$ to \\ $\left(\B_n,\Lip^{\beta^\infty}_{\mathcal{I}(\infty),\tau^\infty}\circ \indmor{\alpha_\infty}{n}\right)$.

As the pivot of $\gamma$ is the unit, the height of $\gamma$ is null. We are left to compute the reach of $\gamma$.

Let $a\in \sa{\B_n}$. We proceed with three case.

\setcounter{case}{0}
\begin{case}
Assume that $a \in \R\unit_{\B_n}$. 
\end{case}
We then have that $\mathsf{l}^N_n(\infty,a) =\mathsf{l}^N_n(m,a) = 0$ , and that $\|a - a\|_{\B_n} = 0$.

\begin{case}
Assume that $a \in\alg{S}$.
\end{case}
We note again that $\mathsf{l}^N_n(\infty,a) \geq m_S > 0$.  Thus, we may define $a'=\frac{\mathsf{l}^N_n(\infty,a)}{\mathsf{l}^N_n(m,a)} a$.  By Inequality (\ref{af-theta-fd-lemma-eq2}), we have:
\begin{equation*}
\left\Vert a-a'\right\Vert_{\B_n} = \left\|a - \frac{\mathsf{l}^N_n(\infty,a)}{\mathsf{l}^N_n(m,a)} a\right\|_{\B_n}\leq \varepsilon m_S \leq \varepsilon \mathsf{l}^N_n(\infty,a) \text{,}
\end{equation*}
while $\mathsf{l}^N_n\left(m,a'\right)=\mathsf{l}^N_n\left(m, \frac{\mathsf{l}^N_n(\infty,a)}{\mathsf{l}^N_n(m,a)} a \right) = \mathsf{l}^N_n(\infty,a)$.
 
\begin{case}
Assume that $a\in \sa{\B_n}$.
\end{case}

By definition of $\alg{S}$ there exists $r, t \in \R$ such that $a = rb + t\unit_{\B_n}$ with $b\in \alg{S}$.  We may assume $r\neq 0$ since the case $r=0$ would be Case 1. If $r <0$, then $-r>0, -b \in \alg{S}$ and $a=-r(-b) +t 1_{\B_n}$.  Hence, we may assume that $r>0$.  

Since Lip-norms vanish on scalars, note that $\mathsf{l}_n^N(\infty,a)= \mathsf{l}_n^N (\infty, rb)$.  Let $b' \in\sa{\B_n}$ be constructed from $b \in \alg{S}$ as in Case 2.   Now, consider $a'=rb'+t1_{\B_n}$.  Thus, by Case 2 and $r>0$: 
\begin{equation*}
\begin{split}
\left\Vert a-a' \right\Vert_{\B_n} & = \left\Vert  rb + t\unit_{\B_n}- \left(rb'+t1_{\B_n}\right) \right\Vert_{\B_n} \\
& =r \left\Vert b-b' \right\Vert_{\B_n}\\
& \leq r\varepsilon \mathsf{l}_n^N (\infty, b) \\
& = \varepsilon \mathsf{l}_n^N (\infty, rb)=\varepsilon \mathsf{l}_n^N (\infty,a), 
\end{split}
\end{equation*}
while $\mathsf{l}_n^N(m,a')=\mathsf{l}_n^N (m,rb') =r \mathsf{l}_n^N (m,b') \leq r \mathsf{l}_n^N (\infty,b)=\mathsf{l}_n^N (\infty,rb)= \mathsf{l}_n^N(\infty,a)$ by Case 2, $r>0$, and since Lip-norms vanish on scalars.

 Thus, from Case 3, we conclude that:
\begin{equation}\label{af-theta-fd-lemma-eq3}
\forall a\in \sa{\B_n}, \ \exists a' \in\sa{\B_n}  \text{ with }  \|a-a'\|_{\B_n}\leq\varepsilon \mathsf{l}^N_n(\infty,a),\ \mathsf{l}^N_n(m,a')\leq\mathsf{l}^N_n(\infty,a)\text{.}
\end{equation}

By symmetry in the roles of $\infty$ and $m$ and Inequality (\ref{m-inf}), we can conclude as well that:
\begin{equation}\label{af-theta-fd-lemma-eq4}
\forall a\in \sa{\B_n}, \ \exists a' \in\sa{\B_n} \text{with}  \|a-a'\|_{\B_n}\leq\varepsilon \mathsf{l}^N_n(m,a),\  \mathsf{l}^N_n(\infty,a')\leq\mathsf{l}^N_n(m,a)\text{.} 
\end{equation}

Now, Expressions (\ref{af-theta-fd-lemma-eq3}) and (\ref{af-theta-fd-lemma-eq4}) together imply that the reach, and hence the length of the bridge $\gamma$ is no more than $\varepsilon$.

Thus, by definition of length and Theorem-Definition (\ref{def-thm}), we gather:
\begin{equation*}
\qpropinquity{}\left((\B_n,\mathsf{l}^N_n(\infty,\cdot)), (\B_n, \mathsf{l}^N_n(m,\cdot))\right) \leq \varepsilon
\end{equation*}
for all $m\geq M,$
which concludes our proof.
\end{proof}

Next, we are now in a position to provide criteria for convergence of AF algebras in quantum propinquity.

\begin{theorem}\label{af-cont}For each $k \in \overline{\N}$, let  $\mathcal{I}(k)=(\A_{k,n}, \alpha_{k,n})_{n \in \N}$ be an inductive sequence of finite dimensional $C^*$-algebras  with $C^*$-inductive limit $\A^k ,$ such that $\A_{k,0} =\A_{k',0} \cong \C$ and $\alpha_{k,n}$ is unital and injective for all $k ,k' \in \overline{\N}, n\in\N$.  If:
\begin{enumerate}
\item $\{ \A^k : k \in \overline{\N} \}$ is an IL-fusing family with fusing sequence $(c_n)_{n \in \N}$,
\item $\{\tau^k : \A^k \rightarrow \C \}_{k \in \overline{\N}}$ is a family of faithful tracial states such that for each $N \in \N$, we have that $\left(\tau^k \circ \indmor{\alpha_k}{N}\right)_{k \in \N_{\geq c_N}}$ converges to $\tau^\infty \circ \indmor{\alpha_\infty}{N}$ in the weak-* topology on $\StateSpace(\A_{\infty,N})$, and
\item $\{\beta^k : \overline{\N} \rightarrow (0,\infty) \}_{k \in \overline{\N}}$ is a family of convergent sequences such that for all $N\in \N $ if $k \in \N_{\geq c_N}$, then  $\beta^k(n)=\beta^\infty(n)$ for all $n \in \{0,1,\ldots,N\}$ and there exists $B: \overline{\N} \rightarrow (0,\infty)$ with $B(\infty)=0$ and $\beta^m(l) \leq B(l)$ for all $m,l \in \overline{\N}$
\end{enumerate}
then, for each $N \in \N$, we have for all $k \geq c_N$:
\begin{equation}\label{fd-ineq}
 \qpropinquity{} \left(\left(\A^k , \Lip^{\beta^k}_{\mathcal{I}(k),\tau^k}\right),\left(\A^\infty , \Lip^{\beta^\infty}_{\mathcal{I}(\infty),\tau^\infty}\right)\right) \leq 2 B(N)+\qpropinquity{} \left(F^N_{N}(k) , F^N_{N}(\infty)  \right), 
\end{equation}
where$\Lip^{\beta^k}_{\mathcal{I}(k), \tau^k}$ is given by Theorem (\ref{AF-lip-norms-thm}) and  $F^N_N(k)$ is given by Theorem (\ref{fd-cont}).

Furthermore:
\begin{equation*}
\lim_{k \to \infty} \qpropinquity{} \left(\left(\A^k , \Lip^{\beta^k}_{\mathcal{I}(k),\tau^k}\right),\left(\A^\infty , \Lip^{\beta^\infty}_{\mathcal{I}(\infty),\tau^\infty}\right)\right)=0.
\end{equation*}
\end{theorem}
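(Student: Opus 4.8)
The plan is to reduce the statement, via the triangle inequality for $\qpropinquity{}$ (Theorem-Definition~(\ref{def-thm})(3)), to the finite-dimensional convergence already proved in Theorem~(\ref{fd-cont}), using Theorem~(\ref{AF-lip-norms-thm})(1) to control — \emph{uniformly in $k$} — the propinquity distance from each $\A^k$ to its $N$-th finite-dimensional subalgebra equipped with the restricted Lip-norm. Since that distance is at most $\beta^k(N)\le B(N)$, and $B(N)\to 0$, letting $N\to\infty$ after $k\to\infty$ absorbs the error.

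First I would establish Inequality~(\ref{fd-ineq}). Fix $N\in\N$ and $k\ge c_N$. Applying Theorem~(\ref{AF-lip-norms-thm})(1) to the inductive sequence $\mathcal{I}(k)$, the faithful tracial state $\tau^k$, and the sequence $\beta=\beta^k$ gives
\begin{equation*}
\qpropinquity{}\left(F^N_N(k),\ \left(\A^k,\Lip^{\beta^k}_{\mathcal{I}(k),\tau^k}\right)\right)\le\beta^k(N)\le B(N),
\end{equation*}
where I used that $F^N_N(k)=\bigl(\A_{k,N},\Lip^{\beta^k}_{\mathcal{I}(k),\tau^k}\circ\indmor{\alpha_k}{N}\bigr)$ and that $\beta^k(N)\le B(N)$ by hypothesis~(3); the identical reasoning with $k$ replaced by $\infty$ bounds $\qpropinquity{}(F^N_N(\infty),(\A^\infty,\Lip^{\beta^\infty}_{\mathcal{I}(\infty),\tau^\infty}))$ by $\beta^\infty(N)\le B(N)$. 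Inserting $F^N_N(k)$ and $F^N_N(\infty)$ as intermediate points in the triangle inequality, and using symmetry (Theorem-Definition~(\ref{def-thm})(2)), yields exactly
\begin{equation*}
\qpropinquity{}\left(\left(\A^k,\Lip^{\beta^k}_{\mathcal{I}(k),\tau^k}\right),\left(\A^\infty,\Lip^{\beta^\infty}_{\mathcal{I}(\infty),\tau^\infty}\right)\right)\le 2B(N)+\qpropinquity{}\left(F^N_N(k),F^N_N(\infty)\right),
\end{equation*}
which is Inequality~(\ref{fd-ineq}); the hypothesis $k\ge c_N$ is used only to ensure $F^N_N(k)$ is defined, and crucially the $2B(N)$ term does not depend on $k$.

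For the limit statement, given $\varepsilon>0$ I would first choose $N\in\N$ so large that $2B(N)<\varepsilon/2$ — possible since $\lim_{N\to\infty}B(N)=B(\infty)=0$ — and then invoke Theorem~(\ref{fd-cont}), which gives $\lim_{k\to\infty}\qpropinquity{}(F^N_N(k),F^N_N(\infty))=0$, to find $K\ge c_N$ with $\qpropinquity{}(F^N_N(k),F^N_N(\infty))<\varepsilon/2$ for all $k\ge K$. Feeding this into Inequality~(\ref{fd-ineq}) gives $\qpropinquity{}((\A^k,\Lip^{\beta^k}_{\mathcal{I}(k),\tau^k}),(\A^\infty,\Lip^{\beta^\infty}_{\mathcal{I}(\infty),\tau^\infty}))<\varepsilon$ for all $k\ge K$, which is the claimed convergence.

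I do not expect a genuine obstacle here: the substantive analytic work has already been carried out in Theorem~(\ref{fd-cont}) and, upstream, in Lemma~(\ref{lip-cont}) and Proposition~(\ref{cond-cont}). The only point requiring care is that the two ``tail'' estimates — between $\A^k$ and $F^N_N(k)$, and between $\A^\infty$ and $F^N_N(\infty)$ — must be uniform in $k$, which is precisely what hypothesis~(3) supplies through the common majorant $B$. This uniformity is what lets us fix $N$ first, absorbing the $B(N)$ error, and only afterwards send $k\to\infty$.
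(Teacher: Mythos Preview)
Your proposal is correct and follows essentially the same approach as the paper: both use Theorem~(\ref{AF-lip-norms-thm})(1) together with hypothesis~(3) to bound the tail distances uniformly by $B(N)$, apply the triangle inequality to obtain Inequality~(\ref{fd-ineq}), and then combine this with Theorem~(\ref{fd-cont}) and $B(N)\to 0$ to conclude. The only cosmetic difference is that the paper phrases the final step via a $\limsup$ argument while you use a direct $\varepsilon$-$K$ argument.
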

\begin{proof}
Fix $N \in \N $.  Then, for all $k \in \overline{\N}$: 
\begin{equation*}
\qpropinquity{} \left(\left(\A^k , \Lip^{\beta^k}_{\mathcal{I}(k),\tau^k}\right),\left(\A_{k,N} , \Lip^{\beta^k}_{\mathcal{I}(k),\tau^k} \circ \indmor{\alpha_k}{N}\right)\right) \leq \beta^k(N) \leq B(N)
\end{equation*}
by assumption and Theorem (\ref{AF-lip-norms-thm}).
And, by the triangle inequality:
\begin{equation*}
\begin{split}
 & \quad \qpropinquity{} \left(\left(\A^k , \Lip^{\beta^k}_{\mathcal{I}(k),\tau^k}\right),\left(\A^\infty , \Lip^{\beta^\infty}_{\mathcal{I}(\infty),\tau^\infty}\right)\right) \\ 
 & \leq 2B(N)
  + \qpropinquity{} \left(\left(\A_{k, N} ,\Lip^{\beta^k}_{\mathcal{I}(k), \tau^k} \circ \indmor{\alpha_k}{N}\right),\left(\A_{\infty, N} ,\Lip^{\beta^\infty}_{\mathcal{I}(\infty), \tau^\infty} \circ \indmor{\alpha_\infty}{N}\right)\right)
\end{split}
\end{equation*}
Now, assume $ k \geq c_N$.  Then, we have: 
\begin{equation*}
\qpropinquity{} \left(\left(\A^k , \Lip^{\beta^k}_{\mathcal{I}(k),\tau^k}\right),\left(\A^\infty , \Lip^{\beta^\infty}_{\mathcal{I}(\infty),\tau^\infty}\right)\right)\leq 2B(N) + \qpropinquity{} \left(F^N_{N}(k) , F^N_{N}(\infty)  \right),
\end{equation*}
and:
\begin{equation*}
\limsup_{\substack{k \to \infty \\ k \in \N_{\geq c_N}}} \qpropinquity{} \left(\left(\A^k , \Lip^{\beta^k}_{\mathcal{I}(k),\tau^k}\right),\left(\A^\infty , \Lip^{\beta^\infty}_{\mathcal{I}(\infty),\tau^\infty}\right)\right) \leq 2B(N), 
\end{equation*}
since $F^N_{N}$ is continuous by Theorem (\ref{fd-cont}). And, thus:
\begin{equation}\label{limsup}
\limsup_{k \to \infty} \qpropinquity{} \left(\left(\A^k , \Lip^{\beta^k}_{\mathcal{I}(k),\tau^k}\right),\left(\A^\infty , \Lip^{\beta^\infty}_{\mathcal{I}(\infty),\tau^\infty}\right)\right) \leq 2B(N).
\end{equation}
Hence, as the left hand side of Inequality (\ref{limsup}) does not depend on $N$, we gather: 
\begin{equation*}
 \quad \limsup_{k \to \infty} \qpropinquity{} \left(\left(\A^k , \Lip^{\beta^k}_{\mathcal{I}(k),\tau^k}\right),\left(\A^\infty , \Lip^{\beta^\infty}_{\mathcal{I}(\infty),\tau^\infty}\right)\right) 
 \leq \lim_{N \to \infty} 2B(N) = 0,
\end{equation*}
which concludes the proof.
\end{proof}

Theorem (\ref{af-cont}) provides a satisfying insight to the quantum metric structure of the Lip-norms of Theorem (\ref{AF-lip-norms-thm}).  Indeed, hypotheses (1), (2), and (3) of Theorem (\ref{af-cont}) are simply appropriate notions of convergence relying on the criteria used to construct the  Lip-norms of Theorem (\ref{AF-lip-norms-thm}) and nothing more.  

Another powerful and immediate consequence of Theorem (\ref{af-cont}) is that, in the Effros-Shen AF algebra case, since the proof of \cite[Theorem 5.14]{AL}  uses sequential continuity and convergence of irrationals in the Baire Space , it is not difficult to see how one may use this Theorem (\ref{af-cont}) to achieve the same result and we present a proof of this here in Theorem (\ref{af-theta-thm-new}).  For the, UHF case \cite[Theorem 4.9]{AL}, one could also apply  Theorem (\ref{af-cont}) to achieve continuity, but although Theorem (\ref{af-cont}) does not directly provide the fact that the map in \cite[Theorem 4.9]{AL} is H{\"o}lder, one may use  Inequality (\ref{fd-ineq}) in the statement of Theorem (\ref{af-cont}), to deduce such a result.   

We present the Effros-Shen AF Algebra case here  as Theorem (\ref{af-theta-thm-new}) to display how one may use the results of this section to prove \cite[Theorem 5.14]{AL} with ease. We note that another application of Theorem (\ref{af-cont}) is  used in \cite{Aguilar16} to provide convergence of quotients via convergence of ideals in a suitable setting. 

Although the following proof of Theorem (\ref{af-theta-thm-new}) cites results from our previous paper \cite{AL}, the results used from \cite{AL} only pertain to the metric structure of the Baire space and the definition of the faithful tracial states on the finite dimensional subalgebras and are not the convergence results themselves.  For convenience, let's first recall the definition of the Baire space and the definition of the Effros-Shen AF algebras.  
\begin{definition}[\cite{Miller95}]\label{Baire-Space-def}
The \emph{Baire space} $\BaireSpace$ is the set $(\N\setminus\{0\})^\N$ endowed with the metric $\mathsf{d}_\BaireSpace$ defined, for any two $(x(n))_{n\in\N}$, $(y(n))_{n\in\N}$ in $\BaireSpace$, by:
\begin{equation*}
\mathsf{d}_\BaireSpace \left((x(n))_{n\in\N}, (y(n))_{n\in\N}\right) = \begin{cases}
0 & \text{ if $\forall n \in \N, x(n) = y(n)$},\\
2^{-\min\left\{ n \in \N : x(n) \not= y(n) \right\}} &  \text{ otherwise}\text{.}
\end{cases}
\end{equation*}
\end{definition}

We now present the construction of the AF C*-algebras $\alg{AF}_\theta$ constructed in \cite{Effros80b} for any irrational $\theta$ in $(0,1)$. For any $\theta \in (0,1)\setminus\Q$, let $(a_j)_{j\in\N}$ be the unique sequence in $\N$ such that:
\begin{equation}\label{continued-fraction-eq}
\theta = \lim_{n\rightarrow\infty} a_0 + \cfrac{1}{a_1 + \cfrac{1}{a_2 + \cfrac{1}{a_3 +\cfrac{1}{\ddots+\cfrac{1}{a_n}}}}}=\lim_{n \to \infty} [a_0, a_1, \ldots,a_n]\text{.}
\end{equation}
The sequence $(a_j)_{j\in\N}$ is called the continued fraction expansion of $\theta$, and we will simply denote it by writing $\theta = [a_0 , a_1 , a_2, \ldots ] = [a_j]_{j\in\N}$. We note that $a_0 = 0$ (since $\theta\in(0,1)$) and $a_n \in \N\setminus\{0\}$ for $n \geq 1 $.

We fix $\theta \in (0,1)\setminus\Q$, and let $\theta = [a_j]_{j\in\N}$ be its continued fraction decomposition. We then obtain a sequence $\left(\frac{p_n^\theta}{q_n^\theta}\right)_{n\in\N}$ with $p_n^\theta \in \N$ and $q_n^\theta \in \N\setminus\{0\}$ by setting:
\begin{equation}\label{pq-rel-eq}
\begin{cases}
\begin{pmatrix}p_1^\theta & q_1^\theta \\ p_0^\theta & q_0^\theta \end{pmatrix} = \begin{pmatrix}a_0a_1+1 & a_1 \\ a_0 & 1 \end{pmatrix}\\
\begin{pmatrix}p_{n+1}^\theta & q_{n+1}^\theta \\ p_n^\theta & q_n^\theta \end{pmatrix} = \begin{pmatrix}a_{n+1} & 1 \\ 1 & 0 \end{pmatrix}\begin{pmatrix}p_n^\theta & q_n^\theta \\ p_{n-1}^\theta & q_{n-1}^\theta \end{pmatrix}\text{ for all $n\in \N\setminus\{0\}$.}
\end{cases}
\end{equation}
We then note that $\frac{p_n^\theta}{q_n^\theta}=[a_0, a_1, \ldots,a_n]$ for all $n \in \N$, and therefore $\left(\frac{p_n^\theta}{q_n^\theta}\right)_{n\in\N}$ converges to $\theta$ (see \cite{Hardy38}).

Expression (\ref{pq-rel-eq}) contains the crux for the construction of the Effros-Shen AF algebras. 

\begin{notation}
Throughout this paper, we shall employ the notation $x\oplus y \in X\oplus Y$ to mean that $x\in X$ and $y\in Y$ for any two vector spaces $X$ and $Y$ whenever no confusion may arise, as a slight yet convenient abuse of notation.
\end{notation}

\begin{notation}\label{af-theta-notation}
Let $\theta \in (0,1)\setminus\Q$ and $\theta = [a_j]_{j\in\N}$ be the continued fraction expansion of $\theta$. Let $(p_n^\theta)_{n\in\N}$ and $(q_n^\theta)_{n\in\N}$ be defined by Expression (\ref{pq-rel-eq}). We set $\af{\theta,0} = \C$ and, for all $n\in\N\setminus\{0\}$, we set:
\begin{equation*}
\af{\theta, n} = \alg{M}(q_{n}^\theta) \oplus \alg{M}(q_{n-1}^\theta) \text{,}
\end{equation*}
and:
\begin{equation*}
\alpha_{\theta,n} : a\oplus b \in \af{\theta,n} \longmapsto \begin{pmatrix}
a & & &  \\
  & \ddots & & \\
  &        & a & \\
  &        &   & b 
\end{pmatrix} \oplus a \in \af{\theta, n+1} \text{,}
\end{equation*}
where $a$ appears $a_{n+1}$ times on the diagonal of the right hand side matrix above. We also set $\alpha_0$ to be the unique unital *-morphism from $\C$ to $\af{\theta,1}$.

We thus define the Effros-Shen C*-algebra $\af{\theta}$, after \cite{Effros80b}:
\begin{equation*}
\af{\theta} = \varinjlim \left(\af{\theta,n}, \alpha_{\theta,n}\right)_{n\in\N}=\varinjlim \mathcal{I}_\theta\text{.}
\end{equation*} 
\end{notation}

Now, we display a new proof of \cite[Theorem 5.14]{AL} using the power of Theorem (\ref{af-cont}).
\begin{theorem}[{\cite[Theorem 5.14]{AL}}]\label{af-theta-thm-new}
Using Notation (\ref{af-theta-notation}) , the function:
\begin{equation*}
\theta \in ((0,1)\setminus \Q, \vert \cdot \vert) \longmapsto \left(\af{\theta}, \Lip_{\mathcal{I}_\theta ,\sigma_\theta}^{\beta_\theta} \right) \in (\mathcal{QQCMS}_{2,0}, \qpropinquity{} )
\end{equation*}
is continuous from $(0,1)\setminus\Q$, with its topology as a subset of $\R$, to the class of $(2,0)$-quasi-Leibniz quantum compact metric spaces metrized by the quantum propinquity $\qpropinquity{}$, where $\sigma_\theta$ is the unique faithful tracial state, and $\beta_\theta$ is the sequence of the reciprocal of dimensions of the inductive sequence, $\mathcal{I}_\theta$.
\end{theorem}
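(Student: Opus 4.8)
The plan is to verify that for an arbitrary irrational $\theta_0 \in (0,1)\setminus\Q$ and an arbitrary sequence $(\theta_j)_{j\in\N}$ in $(0,1)\setminus\Q$ with $\theta_j \to \theta_0$, the hypotheses of Theorem (\ref{af-cont}) are satisfied for a suitably relabeled family, after which the conclusion is immediate. Since $(0,1)\setminus\Q$ is metrizable, sequential continuity suffices, so I would begin by reducing to the sequential setting: set $\A^k = \af{\theta_k}$ for $k \in \N$ and $\A^\infty = \af{\theta_0}$, with the inductive sequences $\mathcal{I}(k) = \mathcal{I}_{\theta_k}$ from Notation (\ref{af-theta-notation}).

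First I would establish that $\{\A^k : k \in \overline{\N}\}$ is an IL-fusing family. The key classical input is that the continued fraction expansion is locally eventually constant: for each $N$ there is $\varepsilon_N > 0$ such that if $|\theta - \theta_0| < \varepsilon_N$ then $\theta$ and $\theta_0$ share the first $N+1$ partial quotients $a_0, \dots, a_N$ (this follows from the standard fact that the first $N$ partial quotients are determined by a neighborhood of $\theta_0$ in $\R$, using that $\theta_0$ is irrational so no ambiguity arises). Consequently, by Expression (\ref{pq-rel-eq}) the pairs $(q_n^{\theta}, q_{n-1}^{\theta})$ agree for $n \le N$, so $\af{\theta,n} = \af{\theta_0,n}$ and, inspecting the formula for $\alpha_{\theta,n}$ which depends only on $a_{n+1}$, also $\alpha_{\theta,n} = \alpha_{\theta_0,n}$ for $n < N$. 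Choosing for each $n$ an index $c_n$ such that $|\theta_k - \theta_0| < \varepsilon_n$ for all $k \ge c_n$ (possible since $\theta_k \to \theta_0$, and we may take $(c_n)$ non-decreasing with $c_n \to \infty$) yields hypothesis (1), with fusing sequence $(c_n)$.

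Next I would verify hypothesis (2): the $\sigma_{\theta_k}$ are faithful tracial states by construction (cited from \cite{AL}), and I must check weak-* convergence of $\sigma_{\theta_k}\circ\indmor{\alpha_{\theta_k}}{N}$ to $\sigma_{\theta_0}\circ\indmor{\alpha_{\theta_0}}{N}$ on the fixed finite-dimensional algebra $\af{\theta_0,N}$ for each $N$. By Lemma (\ref{trace-conv-lemma}) this reduces to convergence of the finitely many weights $\lambda_{k,j}$ in the decomposition of the trace over the summands of $\af{\theta_0,N}$; from \cite[Section 5]{AL} these weights are explicit rational functions of $\theta_k$ (built from the $p_n^{\theta_k}, q_n^{\theta_k}$, which stabilize for $k \ge c_N$) and of $\theta_k$ itself through the formula for $\sigma_\theta$, hence converge to the corresponding quantities at $\theta_0$ by continuity in $\theta$. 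For hypothesis (3) I would take $\beta_{\theta_k}(n)$ = reciprocal of the dimension of $\af{\theta_k,n}$; for $k \ge c_n$ these dimensions stabilize to those of $\af{\theta_0,n}$, giving the required agreement $\beta^k(n) = \beta^\infty(n)$ for $n \le N$ when $k \ge c_N$, and a uniform dominating function $B$ with $B(\infty)=0$ can be extracted since dimensions of $\af{\theta_0,n}$ grow without bound (using the stabilization to compare $\beta^k(l)$ against a single $B(l)$).

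With all three hypotheses in hand, Theorem (\ref{af-cont}) gives $\lim_{k\to\infty}\qpropinquity{}((\af{\theta_k},\Lip^{\beta_{\theta_k}}_{\mathcal{I}_{\theta_k},\sigma_{\theta_k}}),(\af{\theta_0},\Lip^{\beta_{\theta_0}}_{\mathcal{I}_{\theta_0},\sigma_{\theta_0}})) = 0$, which is exactly sequential continuity at $\theta_0$, and since $\theta_0$ was arbitrary the theorem follows. I expect the main obstacle to be hypothesis (2): carefully extracting from \cite{AL} the explicit form of the tracial weights $\lambda_{k,j}$ and confirming they depend continuously on $\theta$ (and only on finitely much data once $k \ge c_N$), so that Lemma (\ref{trace-conv-lemma}) applies cleanly; the IL-fusing property, by contrast, is a fairly direct consequence of the stabilization of continued fraction expansions near an irrational.
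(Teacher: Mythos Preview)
Your proposal follows exactly the paper's strategy: reduce to sequential continuity and verify the three hypotheses of Theorem~(\ref{af-cont}). Your handling of hypotheses (1) and (2) matches the paper's, the latter invoking \cite[Lemma 5.5]{AL} for the explicit tracial weight $t(\theta,N)=(-1)^{N-1}q^\theta_N(\theta\, q^\theta_{N-1}-p^\theta_{N-1})$, whose continuity in $\theta$ is immediate once the $p^\theta_j,q^\theta_j$ for $j\leq N$ have stabilized.

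Your justification of hypothesis (3), however, has a gap. Stabilization only gives $\beta^k(l)=\beta^\infty(l)$ for $k\geq c_l$; it says nothing about $\beta^k(l)$ for $k<c_l$, and growth of the dimensions at the single point $\theta_0$ does not control those indices. Since $c_l$ may grow with $l$, pointwise convergence $\beta^k(l)\to 0$ for each fixed $k$ does not by itself produce a uniform majorant $B(l)\to 0$. The paper supplies the missing ingredient: the classical bound $q^\theta_n\geq q^\Phi_n$ for \emph{every} irrational $\theta\in(0,1)$, where $\Phi$ has all partial quotients equal to $1$ (so the $q^\Phi_n$ are Fibonacci numbers, the slowest-growing case). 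This yields $\beta_\theta(n)\leq\beta_\Phi(n)$ uniformly in $\theta$, and one simply takes $B=\beta_\Phi$. You should either invoke this fact, or supply an alternate argument---for instance, combine stabilization with the monotone decrease of each $\beta^k$ to reduce to finitely many exceptional $k$.
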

\begin{proof}
Let $(\theta^n)_{n \in \overline{N}} \subset (0,1)\setminus\Q$ such that $\lim_{n \to \infty} \theta^n =\theta^\infty$.  For each $n \in \overline{\N}$, let $\mathsf{cf}(\theta^n)=[a^{\theta^n}_j]_{j \in \N}$ denote the continued fraction expansion of $\theta^n$.  By \cite[Proposition 5.10]{AL}, the sequence $(\mathsf{cf}(\theta^n))_{n \in \N}$ converges to $\mathsf{cf}(\theta^\infty)$ in the Baire space metric defined in Definition (\ref{Baire-Space-def}).  By definition of convergence, there exists a non-decreasing sequence $(c_n)_{ n \in \N} \subset \N$ such that $\lim_{n \to \infty } c_n = \infty$, and if $k \geq c_N$, then  
\begin{equation*}\mathsf{d}_\BaireSpace ( \mathsf{cf}(\theta^{k}), \mathsf{cf}(\theta^\infty) )< \frac{1}{2^N}
\end{equation*} for each $N \in \N$.  By definition of the metric $\mathsf{d}_\BaireSpace$, this implies that for each $N \in \N$, if $k \in \N_{\geq c_N}$, then $a^{\theta^k}_n=a^{\theta^\infty}_n$ for all $n \in \{0, \ldots, N\}$ and thus the same holds for $p^{\theta^k}_n$ and $q^{\theta^k}_n$ by Equation (\ref{pq-rel-eq}).   Therefore by Notation (\ref{af-theta-notation}) and Definition (\ref{coll-def}), the family $\{\af{\theta^n} : n \in \overline{\N}\}$ is a fusing family with fusing sequence $(c_n)_{n \in \N}.$  Thus,  hypothesis (1) of Theorem (\ref{af-cont}) is satisfied.

For hypothesis (2) of Theorem (\ref{af-cont}), fix $N \in \N$ and assume $k \in \N_{\geq c_N}$. By \cite[Lemma 5.5]{AL} and Lemma (\ref{trace-conv-lemma}), we only need to show that $(t(\theta^k,N))_{n \in \N} \subset \R$ converges to $t(\theta^\infty,N)$, where $t(\theta, n)= (-1)^{n-1}q^\theta_n (\theta q^\theta_{n-1} - p^\theta_{n-1})$ for all $\theta \in (0,1) \setminus \Q$ and $n \in \N \setminus \{0\}$.  Now, by our fusing sequence $(c_n)_{n \in \N}$, if $k \geq c_N$, then   $t(\theta^k,N)=  (-1)^{N-1}q^{\theta^\infty}_N (\theta^k q^{\theta^\infty}_{N-1} - p^{\theta^\infty}_{N-1})$.  Therefore, since $\lim_{n \to \infty} \theta^n =\theta^\infty$, we have that $(t(\theta^k,N))_{n \in \N} \subset \R$ converges to $t(\theta^\infty,N)$, which establishes hypothesis (2) of Theorem (\ref{af-cont}).

For hypothesis (3) of Theorem (\ref{af-cont}), consider the continued fraction $\mathsf{cf}(\Phi)=[1]_{j \in \N}$, which is given by $\Phi=1-\phi$, where $\phi$ is the golden ratio $\phi=\frac{1+\sqrt{5}}{2}$.  By definition of the rational approximations defined by Equation (\ref{pq-rel-eq}), we have that $q^\theta_n \geq q^\phi_n$ for all $\theta \in (0,1)\setminus \Q$ and $n \in \N.$  Now, if $\beta_\theta (n) =\frac{1}{\dim (\af{\theta,n})}=\frac{1}{(q^\theta_n)^2 + (q^\theta_{n-1})^2} $ for all $\theta \in (0,1)\setminus \Q$ and $n \in \N \setminus 0$, then $\beta_\theta (n) \leq \beta_\Phi(n)$ for all $\theta \in (0,1)\setminus \Q$ and $n \in \N \setminus \{0\}$.   Therefore, the family of sequences  $\{\beta_{\theta^n} : n \in \overline{\N}\}$ along with the sequence $B(n)=\beta_\Phi(n)$ for all $n \in \N$ satisfy hypothesis (3) of Theorem (\ref{af-cont}) with the fusing sequence $(c_n)_{n \in \N}$.  
\end{proof}

\section{Leibniz Lip-norms for Unital AF algebras}\label{Leibniz-Lip-AF}

Our work in \cite{AL}  relied on the hypothesis of the existence of faithful tracial state for a unital AF algebra.  Of course, every simple unital AF algebra has a faithful tracial state, but in the non-simple case, there exist unital AF algebras without faithful tracial states. For example, consider the unitlization of the compact operators on an infinite-dimensional separable Hilbert space.  

To remedy this,  we introduce Leibniz Lip-norms that exist on any unital AF algebra built from quotient norms and the work of Rieffel in \cite{Rieffel11}, in which he established the Leibniz property for certain quotient norms.  Another consequence of this is that any unital AF algebra, $\A$, has finite dimensional approximations in propinquity provided by any  increasing sequence of unital finite dimensional subalgebras $(\A_n)_{n \in \N}$ such that $\A=\overline{\cup_{n \in \N} \A_n }^{\Vert \cdot \Vert_\A}$.  Furthermore, we show that any Lip-norm defined on the dense subspace $\cup_{n \in \N} \A_n $ proves this fact of finite-dimensional approximations in propinquity.  Therefore, we shall see that in some sense the C*-algebra structure of an AF algebra is enough to provide finite dimensional approximations in propinquity, which establishes a gratifying consistency between the C*-algebraic notion of approximately finite dimensional and the propinquity notion of approximately finite dimensional.

We note that the introduction of these Lip-norms from quotient norms in Theorem (\ref{AF-lip-norms-thm-best}) does not replace or diminish the importance of the Lip-norms from conditional expectations of Theorem (\ref{AF-lip-norms-thm}).  The conditional expectation Lip-norms give us explicit projections onto the C*-subalgebras, while also providing key estimates in quantum propinquity that  are crucial to our continuity results about AF algebras (see Theorem (\ref{af-theta-thm-new}) and Theorem (\ref{af-cont})).

We begin by providing some known examples of finite dimensional approximations in propinquity to gather a better understanding of the concept.  What is especially enlightening is that there are non-AF algebras that have natural finite-dimensional approximations in the sense of the propinquity.  We note for (1) of Example (\ref{af-propi-ex}) that for a compact metric space $X$, the C*-algebra $C(X)$ is AF if and only if $X$ is totally disconnected by \cite[Proposition 3.1]{Bratteli74}.
\begin{example}\label{af-propi-ex}
\begin{enumerate}
\item All C*-algebras of the form $C(X)$, where $(X, \mathsf{d}_X)$ is a compact metric space, have finite dimensional approximations in propinqtuiy induced by finite $\varepsilon$-nets, $X_\varepsilon \subseteq X$ and $C(X_\varepsilon)$. Indeed,  the Gromov-Hausdorff distance $\mathrm{GH}(X_\varepsilon , X) \leq \varepsilon $ by \cite[Example 7.3.11]{burago01} and \cite[Theorem 6.6]{Latremoliere13}  imply that $\qpropinquity{} \left(\left(C(X_\varepsilon) , \Lip_{\mathsf{d}_X}\right), \left(C(X) , \Lip_{\mathsf{d}_X}\right)\right) \leq \varepsilon.$

\item  Via a different approach than that of (1), the  commutative C*-algebra $C(S^2)$ has finite dimensional approximations in propinquity provided  by noncommutative finite-dimensional C*-algebras, where $S^2 = \{(x,y,z) \in \R^3 : x^2 + y^2 +z^2 =1 \}$.  Indeed, in \cite{Rieffel15},  finite dimensional approximations are provided by full matrix algebras.  

\item The quantum (noncommutative) tori  and $C\left(\T^2 \right)$, the C*-algebra of continuous functions on the torus  ---, which are non-AF, as presented in \cite{Latremoliere13c} have finite dimensional approximations in quantum propinquity provided  by fuzzy tori. 

\item Any unital AF algebra, $\A$ with faithful tracial state has finite dimensional approximations in propinquity provided  by any  inductive sequence of finite dimensional C*-algebras with inductive limit $\A$ by \cite{AL}.  
\end{enumerate}
\end{example}

One thing in common with all of these examples is that the existence of finite dimensional approximations in propinquity are proven using specific Lip-norms.  We shall see in Proposition (\ref{fd-approx-af-prop}) that in the case of unital AF algebras, the existence of a Lip-norm that is finite on the canonical dense subspace is all that is required to provided finite dimensional approximations in propinquity. In some sense, this means that the C*-algebra structure of an AF algebra is enough to provide finite dimensional approximations in propinquity.

\begin{notation}\label{lip-balls}
Let $(\A , \Lip_\A)$ be a quasi-Leibniz quantum compact metric space. Let $\mu \in \StateSpace (\A)$.  Denote:
\begin{equation*}
\begin{split}
& \Lipball{1}{\A, \Lip_\A}=\{ a \in \sa{\A} : \Lip_\A (a) \leq 1\}\\
& \Lipball{1}{\A, \Lip_\A, \mu }= \{ a \in \sa{\A} : \Lip_\A (a) \leq 1, \mu(a)=0\}.
\end{split}
\end{equation*}
\end{notation}
\begin{lemma}[{\cite{Latremoliere13}}]\label{haus-bd} Let $(\A , \Lip_\A) , (\B , \Lip_\B)$ be two quasi-Leibniz quantum compact metric spaces. If $\gamma=(\D, \omega , \pi_\A , \pi_\B)$ is a bridge of Definition (\ref{bridge-def}), then for any two states $\varphi_\A \in \StateSpace(\A), \varphi_\B \in \StateSpace(\B) $, we have that: 
\begin{equation*}
\begin{split}
&\Haus{\D} (\pi_\A(\Lipball{1}{\A, \Lip_\A})\omega, \omega \pi_\B (\Lipball{1}{\B, \Lip_\B})) \leq \\
&  \quad \Haus{\D} (\pi_\A(\Lipball{1}{\A, \Lip_\A, \varphi_\A })\omega, \omega \pi_\B (\Lipball{1}{\B, \Lip_\B , \varphi_\B })).
\end{split}
\end{equation*}
\end{lemma}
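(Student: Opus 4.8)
The plan is to derive the inequality from a single elementary fact --- that the Hausdorff distance does not increase when both sets are translated by a common set of vectors --- once we recognise that the two ``uncentered'' Lipschitz balls on the left are exactly the two ``centered'' balls on the right, translated along the line $\R\omega$.

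First I would record, for $\varphi_\A \in \StateSpace(\A)$, the identity $\Lipball{1}{\A,\Lip_\A} = \Lipball{1}{\A,\Lip_\A,\varphi_\A} + \R\unit_\A$, and similarly for $\B$. The inclusion $\supseteq$ is immediate since $\Lip_\A(a + t\unit_\A) = \Lip_\A(a)$ for all $t \in \R$; for $\subseteq$, given $a \in \sa{\A}$ with $\Lip_\A(a) \leq 1$ one notes that $a - \varphi_\A(a)\unit_\A$ still has $\Lip_\A \leq 1$ and is annihilated by $\varphi_\A$ (because $\varphi_\A$ is a state), so that $a = (a - \varphi_\A(a)\unit_\A) + \varphi_\A(a)\unit_\A$ has the required form. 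Applying the unital $*$-monomorphisms $\pi_\A,\pi_\B$, multiplying by the pivot $\omega$, and using $\pi_\A(\unit_\A) = \pi_\B(\unit_\B) = \unit_\D$ turns this into
\begin{equation*}
\pi_\A(\Lipball{1}{\A,\Lip_\A})\omega = \pi_\A(\Lipball{1}{\A,\Lip_\A,\varphi_\A})\omega + \R\omega, \quad \omega\pi_\B(\Lipball{1}{\B,\Lip_\B}) = \omega\pi_\B(\Lipball{1}{\B,\Lip_\B,\varphi_\B}) + \R\omega .
\end{equation*}

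Next I would isolate the general fact: for nonempty subsets $X, Y$ of a normed space $E$ and any nonempty $T \subseteq E$, one has $\Haus{E}(X + T, Y + T) \leq \Haus{E}(X, Y)$. This is a one-line check using translation invariance of the norm metric: for $x \in X$ and $t \in T$, since $Y + t \subseteq Y + T$ we get $\dist(x + t, Y + T) \leq \dist(x + t, Y + t) = \dist(x, Y) \leq \Haus{E}(X, Y)$, so $\sup_{z \in X + T}\dist(z, Y + T) \leq \Haus{E}(X, Y)$, and the symmetric bound follows by exchanging the roles of the two sets. Taking $E = \D$, $X = \pi_\A(\Lipball{1}{\A,\Lip_\A,\varphi_\A})\omega$, $Y = \omega\pi_\B(\Lipball{1}{\B,\Lip_\B,\varphi_\B})$ and $T = \R\omega$, and substituting the two displayed identities, yields exactly the asserted inequality.

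The one point I expect to require care is that the left-hand sets are \emph{not} compact: each contains $\R\omega$ and is therefore unbounded in $\D$, so the symbol $\Haus{\D}$ there must be read as the Hausdorff pseudodistance extended to arbitrary nonempty subsets, with values in $[0,+\infty]$. No difficulty arises, since the argument only ever bounds that pseudodistance \emph{above} by $\Haus{\D}\bigl(\pi_\A(\Lipball{1}{\A,\Lip_\A,\varphi_\A})\omega, \omega\pi_\B(\Lipball{1}{\B,\Lip_\B,\varphi_\B})\bigr)$, which is genuinely finite because $\Lipball{1}{\A,\Lip_\A,\varphi_\A}$ and $\Lipball{1}{\B,\Lip_\B,\varphi_\B}$ are norm-compact by Theorem (\ref{Rieffel-thm}) (norm-totally bounded, and norm-closed by lower semicontinuity of the Lip-norms) while $\pi_\A,\pi_\B$ and left/right multiplication by $\omega$ are norm-continuous; alternatively, one simply observes that the translation-invariance inequality above uses no compactness whatsoever, so the formula $\Haus{\D}(\cdot,\cdot)=\max\{\sup\inf,\ \sup\inf\}$ on arbitrary nonempty subsets is all that is needed.
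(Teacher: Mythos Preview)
Your proof is correct and is essentially the same argument the paper invokes: the paper does not spell out a proof here but simply points to the passage between \cite[Notation~3.13]{Latremoliere13} and \cite[Definition~3.14]{Latremoliere13}, where precisely the decomposition $\Lipball{1}{\A,\Lip_\A}=\Lipball{1}{\A,\Lip_\A,\varphi_\A}+\R\unit_\A$ together with translation of both sets by $\R\omega$ is used to reduce the reach to the centered balls. Your write-up is a faithful reconstruction of that argument, including the correct caveat that the Hausdorff distance on the left must be read as the extended pseudodistance on possibly unbounded sets.
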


\begin{proof}
The proof is the argument in between \cite[Notation 3.13]{Latremoliere13} and \cite[Definition 3.14]{Latremoliere13}.
\end{proof}

To make for easier notation we will begin presenting results in the closure of the union case rather than the inductive limit case.  We shall see that this causes no issue in Proposition (\ref{ind-lim-union}).  We note that in the following result, the proof does not require any notion of quasi-Leibniz.  We  only include it to utilize the full power of the quantum propinquity. The proof of Proposition (\ref{fd-approx-af-prop}) also does not require that the subalgebras be finite-dimensional, but since no such example of a lip-norm exists yet of this form outside the AF case, we leave this assumption there. Also, thank you to F. \Latremoliere \ for pointing out \cite[Proposition 4.4]{Rieffel99} to me, which helped in the proof of  Proposition (\ref{fd-approx-af-prop}).  First, a remark on a crucial technique used in the proof of Proposition (\ref{fd-approx-af-prop}).
\begin{remark}
 Proposition (\ref{fd-approx-af-prop})  utilizes the notion of ``closing" a Lip-norm in a non-trivial way.  This notion was introduced by Rieffel in \cite{Rieffel99} in the comments preceding \cite[Proposition 4.4]{Rieffel99} to extend a Lip-norm onto the completion of a space.  Whereas, we use this notion to restrict our attention to a particular dense subspace to allow for finite-dimensional approximations.
\end{remark}
\begin{proposition}\label{fd-approx-af-prop}
Fix $C \geq 1, D\geq 0$.  Let $\A$ be a unital AF algebra such that $(\A,\Lip)$ is a $(C,D)$-quasi-Leibniz quantum compact metric space. Let  $(\A_n)_{n \in \N}$ be a sequence of unital finite dimensional C*-subalgebras of $\A$  such that $\A=\overline{\cup_{n \in \N} \A_n }^{\Vert \cdot \Vert_\A}$ and $\dom{\Lip}\supseteq \sa{\cup_{n \in \N} \A_n }$. Define a seminorm on $\sa{\A}$ by:
\begin{equation*}
\Lip_f (a)=
\begin{cases}
\Lip (a) &: \text{ if } a \in \sa{\cup_{n \in \N} \A_n}\\
\infty &: \text{ otherwise }. 
\end{cases}
\end{equation*} Let $L_{f,1}=\overline{\left\{ a \in \sa{\A}: \Lip_f (a) \leq 1\right\}}^{\Vert \cdot \Vert_\A}$. 

 If we let $\overline{\Lip}$ be the Minkowski functional of $L_{\A,1}$ on $\sa{\A}$, i.e.  
 \begin{equation*}\overline{\Lip}(a) = \inf \left\{ r>0 : \frac{1}{r}a \in L_{f,1} \right\}
 \end{equation*} for all $a \in \sa{\A}$,     then:
  \begin{equation}\label{minkowski-lip-ball-eq}
   \begin{split}  \ \Lip(a)= \Lip_f (a)=\Lip(a) < & \infty  \ \text{ for all }\ a \in \sa{\cup_{n \in \N} \A_n} \text{ and, }\\
    \left\{ a \in \sa{\A} : \overline{\Lip} (a) \leq 1 \right\} &= \overline{\left\{ a \in \sa{\cup_{n \in \N} \A_n} : \overline{\Lip} (a) \leq 1 \right\}}^{\Vert \cdot \Vert_\A}\\
  &= \overline{\left\{ a \in \sa{\cup_{n \in \N} \A_n} : \Lip (a) \leq 1 \right\}}^{\Vert \cdot \Vert_\A}, \end{split}
  \end{equation} and $(\A , \overline{\Lip})$ and $(\A_n , \overline{\Lip})=(\A_n, \Lip)$ are  $(C,D)$-quasi-Leibniz quantum compact metric space for all $n \in \N$ such that $
\lim_{n \to \infty} \qpropinquity{C,D} \left(\left(\A_n, \overline{\Lip} \right), \left(\A, \overline{\Lip}\right)\right) = 0.$
\end{proposition}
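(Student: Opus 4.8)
The plan is to verify the statement in the natural order: first establish the equality of Lip-norms on the dense subspace, then identify the unit ball of $\overline{\Lip}$, then check the quasi-Leibniz quantum compact metric space axioms, and finally deduce convergence via Theorem (\ref{AF-lip-norms-thm})'s model argument, i.e. by exhibiting short bridges.

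First I would record the trivial fact that $\Lip_f(a) = \Lip(a)$ for $a \in \sa{\cup_{n \in \N} \A_n}$ directly from the definition of $\Lip_f$, using the hypothesis $\dom{\Lip} \supseteq \sa{\cup_{n \in \N} \A_n}$ so that $\Lip(a) < \infty$ there. Next I would prove the description of $\{a : \overline{\Lip}(a) \leq 1\}$ in Equation (\ref{minkowski-lip-ball-eq}). Since $L_{f,1}$ is by definition the $\|\cdot\|_\A$-closure of $\{a \in \sa{\A} : \Lip_f(a) \leq 1\} = \{a \in \sa{\cup_n \A_n} : \Lip(a) \leq 1\}$, the second and third lines of (\ref{minkowski-lip-ball-eq}) will follow once I show that the unit ball of the Minkowski functional $\overline{\Lip}$ of a closed, convex, balanced (it contains $0$ and is symmetric since $\Lip$ is a seminorm) set equals that set itself. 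The key subtlety is that $L_{f,1}$ must be absorbing in the relevant sense and that $\overline{\Lip}$ is genuinely a seminorm with $\{a : \overline{\Lip}(a) = 0\} = \R\unit_\A$; I would invoke lower semicontinuity of $\Lip$ (axiom (4) of Definition (\ref{quasi-Monge-Kantorovich-def})) together with density of $\sa{\cup_n \A_n}$ to show that $\overline{\Lip} \leq \Lip$ everywhere and $\overline{\Lip} = \Lip$ on $\sa{\cup_n \A_n}$, which pins down the null space. This should also be where I cite the "closing a Lip-norm" technique of Rieffel \cite[Proposition 4.4]{Rieffel99} referenced in the remark: the closure of a Lip-norm ball is again a Lip-norm ball, and the associated seminorm is lower semicontinuous and $(C,D)$-quasi-Leibniz because the quasi-Leibniz inequality passes to $\|\cdot\|_\A$-limits (the norm is jointly continuous in the products and $\overline{\Lip}$ is the pointwise sup over a family, hence lsc).

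Then I would check that $(\A, \overline{\Lip})$ is a $(C,D)$-quasi-Leibniz quantum compact metric space: the Monge-Kantorovich metric condition follows from Theorem (\ref{Rieffel-thm})(3) since totally boundedness of $\{a + \R\unit_\A : \Lip(a) \leq 1\}$ in $\sa{\A}/\R\unit_\A$ — which holds because $(\A,\Lip)$ is a quantum compact metric space — is preserved under closure, and closure of a totally bounded set is totally bounded (indeed compact); the null-space condition and lower semicontinuity were handled above; and quasi-Leibniz passes to limits. For $(\A_n, \overline{\Lip})$ I would observe that since $\A_n$ is finite dimensional, $\overline{\Lip}$ restricted to $\sa{\A_n}$ agrees with $\Lip$ (by the first displayed equality of (\ref{minkowski-lip-ball-eq}), as $\A_n \subseteq \cup_m \A_m$) and its unit ball in the finite dimensional space $\sa{\A_n}/\R\unit_{\A_n}$ is automatically compact, so $(\A_n,\Lip)$ is a quantum compact metric space, quasi-Leibniz being inherited from $\A$.

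Finally, for the convergence statement, I would mimic the bridge construction in the proof of Theorem (\ref{fd-cont}) / Theorem (\ref{AF-lip-norms-thm}). Fix $n$; for $m \geq n$ use the bridge $\gamma_m = (\A, \unit_\A, \iota_m, \mathrm{id})$ where $\iota_m : \A_m \hookrightarrow \A$ is the inclusion (so the pivot is the unit, making the height zero), and estimate the reach, i.e. the Hausdorff distance between $\{a \in \sa{\A_m} : \overline{\Lip}(a) \leq 1\}$ and $\{a \in \sa{\A} : \overline{\Lip}(a) \leq 1\}$ in the bridge seminorm $\|a - b\|_\A$. By (\ref{minkowski-lip-ball-eq}) the latter ball is the $\|\cdot\|_\A$-closure of $\bigcup_{m} \{a \in \sa{\A_m} : \Lip(a) \leq 1\}$, and total boundedness of that ball (from the quantum compact metric space property) means: given $\varepsilon > 0$, finitely many elements $a_1,\dots,a_k$ of $\sa{\cup_m \A_m}$ with $\Lip(a_i) \leq 1$ are $\varepsilon$-dense; choosing $m$ large enough that all $a_i \in \sa{\A_m}$ makes the reach of $\gamma_m$ at most $\varepsilon$, hence $\qpropinquity{C,D}((\A_m,\overline{\Lip}),(\A,\overline{\Lip})) \leq \bridgelength{\gamma_m}{\overline{\Lip},\overline{\Lip}} \leq \varepsilon$ by Theorem-Definition (\ref{def-thm})(4). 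The main obstacle I anticipate is the careful handling of the reach estimate together with the passage between $\Lip$ and $\overline{\Lip}$: I must make sure that an element of the $\overline{\Lip}$-unit ball of $\A$ is $\varepsilon$-approximated by an element lying in some $\sa{\A_m}$ whose $\overline{\Lip}$-value is genuinely $\leq 1$ (not merely close to $1$), which is exactly what the closure characterization in (\ref{minkowski-lip-ball-eq}) buys us — so the bulk of the real work is front-loaded into establishing that identity correctly, and the convergence then follows cleanly.
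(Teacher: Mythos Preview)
Your proposal follows essentially the same route as the paper: establish (\ref{minkowski-lip-ball-eq}) via the Minkowski functional on the closed set $L_{f,1}$, verify the quantum compact metric space axioms through Theorem~(\ref{Rieffel-thm})(3) and closure of totally bounded sets, push the quasi-Leibniz inequality through limits, and prove convergence with the trivial bridge $(\A,1_\A,\iota_n,\mathrm{id}_\A)$ whose height is zero and whose reach is controlled by a finite net coming from total boundedness.

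There is one genuine imprecision in your convergence paragraph. You write ``total boundedness of that ball'' and then claim finitely many $a_1,\dots,a_k$ with $\Lip(a_i)\leq 1$ are $\varepsilon$-dense in $\{a:\overline{\Lip}(a)\leq 1\}$. That set is \emph{not} totally bounded: it contains all of $\R 1_\A$. What is totally bounded is its image in $\sa{\A}/\R 1_\A$, or equivalently the $\mu$-centered ball $\Lipball{1}{\A,\overline{\Lip},\mu}$ for any state $\mu$. The paper handles exactly this point: it picks a finite net inside $\Lipball{1}{\A,\overline{\Lip},\mu}$, perturbs each net element into $\sa{\cup_n\A_n}$ using (\ref{minkowski-lip-ball-eq}), subtracts off $\mu(b)1_\A$ to land back in the centered ball of $\A_n$, and then invokes Lemma~(\ref{haus-bd}) to pass from a Hausdorff estimate on the centered balls to the reach (which is defined on the full Lip-balls). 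Your sketch will go through once you insert this scalar-adjustment step; without it, the ``$\varepsilon$-dense'' claim is false as stated.

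A minor side remark: your justification that $\overline{\Lip}$ is lower semicontinuous because it is ``the pointwise sup over a family'' is not quite how it arises here; rather, $\overline{\Lip}$ is lsc simply because its sublevel sets $\{a:\overline{\Lip}(a)\leq r\}=rL_{f,1}$ are closed by construction. The paper's quasi-Leibniz verification (its Claim) is the same limit argument you sketch, carried out case-by-case in $\overline{\Lip}(a),\overline{\Lip}(b)\in\{0\}\cup(0,\infty)\cup\{\infty\}$.
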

\begin{proof}
By \cite[Proposition IV.1.14]{Conway90}, the map $\overline{\Lip}$ is a  seminorm on $\sa{\A}$  such that  $\left\{ a \in \sa{\A} : \overline{\Lip} (a) \leq 1 \right\}=L_{f,1}$ since $ L_{f,1}$ is closed.  By construction, we have that $\Lip_f (a) \leq 1<\infty$ implies that $L_{f,1}= \overline{\left\{ a \in \sa{\cup_{n \in \N} \A_n} : \overline{\Lip} (a) \leq 1 \right\}}^{\Vert \cdot \Vert_\A}$.  The fact that $\overline{\Lip}(a)=\Lip_f(a)= \Lip (a)< \infty$ for all $ a \in \sa{\cup_{n \in \N} \A_n}$ is routine to check.  This establishes Expression (\ref{minkowski-lip-ball-eq}).  Also $\overline{\Lip}$ is a  lower semi-continuous seminorm with dense domain such that $\overline{\Lip}^{-1}(\{0\})=\R1_\A$ by Expression (\ref{minkowski-lip-ball-eq}).

 Next, we show that $(\A, \overline{\Lip})$ is a quantum compact metric space, and we use equivalence (3) of Theorem (\ref{Rieffel-thm}) to accomplish this. Let $q: a \in \sa{\A} \longmapsto a+\R1_\A \in \sa{\A}/\R1_\A$ denote the quotient map, which is continuous with respect to the assoicated norms. Now, since $(\A, \Lip)$ is a quantum compact metric space, we have that the set $q(\{a \in \sa{\A} : \Lip(a) \leq 1\})=\{ a+\R1_\A \in \sa{\A}/\R1_\A : \Lip(a) \leq 1 \}$ is totally bounded with respect to $\Vert \cdot \Vert_{\sa{\A}/\R1_\A}$ by Theorem (\ref{Rieffel-thm}).   Hence, by containment and Expression (\ref{minkowski-lip-ball-eq}), the set $q(\{ a \in \sa{\cup_{n \in \N} \A_n}: \overline{\Lip} (a) \leq 1\})=q(\{ a \in \sa{\cup_{n \in \N} \A_n}: \Lip (a) \leq 1\})$ is totally bounded with respect to the norm $\Vert \cdot \Vert_{\sa{\A}/\R1_\A}$. 
 
 Now, since $q$ is continuous, we have that 
 \begin{equation*}
 \begin{split}
  E&=
 q\left(\overline{\{ a \in \sa{\cup_{n \in \N} \A_n}: \overline{\Lip} (a) \leq 1\}}^{\Vert \cdot \Vert_\A}\right)\\
 &  \subseteq \overline{q(\{ a \in \sa{\cup_{n \in \N} \A_n}: \overline{\Lip} (a) \leq 1\})}^{\Vert \cdot \Vert_{\sa{\A}/\R1_\A}},
 \end{split}
 \end{equation*}  which implies that $E$ is totally bounded by containment and since the set on the right hand side is the closure of a totally bounded set.  By Expression (\ref{minkowski-lip-ball-eq}): 
 \begin{equation*}
 E=q(\{a \in \sa{\A} : \overline{\Lip}(a) \leq 1\})=\{a +\R1_\A \in \sa{\A}/\R1_\A:\overline{\Lip}(a) \leq 1 \}, 
 \end{equation*} and therefore, the pair $(\A, \overline{\Lip})$ is a  quantum compact metric space of Defintion (\ref{quasi-Monge-Kantorovich-def}) by Theorem (\ref{Rieffel-thm}).
 
     Now, we prove that $\overline{\Lip}$ is $(C,D)$-quasi Leibniz.  Our proof follows similarly to the proof of \cite[Proposition 3.1]{Rieffel10c}, which is the case of $C=1, D=0$.  Another similar result is \cite[Lemma 3.1]{Latremoliere13c}, which does involve a more general case than the quasi-Leibniz case. But,  we verify that these results still apply in our situation as there are some subtle differences with our construction with regard to the closedness of certain sets and conditions on the seminorm $\Lip$. 
     \begin{claim}
     The seminorm $\overline{\Lip}$ is $(C,D)$-quasi-Leibniz.
     \end{claim}
     \begin{proof}[Proof of claim] First, assume that $a, b \in \sa{\A}$ such that $\overline{\Lip}(a)=1=\overline{\Lip}(b)$. By definition of $\overline{\Lip}$,  there exists a sequence $(r_n)_{n \in \N} \subset \R$ that converges to $1=\overline{\Lip}(a)$ such that $r_n \geq 1=\overline{\Lip}(a)$ and $\frac{1}{r_n}a \in L_{f,1}$ for all $n \in \N$.  In particular, we have that the sequence $\left(\frac{1}{r_n}a\right)_{n \in \N} \subset L_{f,1}$ converges to $a$ with respect to $\Vert \cdot \Vert_\A$.  Now, by definition of $L_{f,1}$, for each $n \in \N$, choose $a_n \in \{c \in \sa{\cup_{n \in \N} \A_n} : \Lip(a) \leq 1\}$ such that $\left\Vert a_n -\frac{1}{r_n}a \right\Vert_\A < \frac{1}{n}$.  Therefore, for $n \in \N$, we have:
     \begin{equation*}
     \Vert a_n -a \Vert_\A \leq \left\Vert a_n -\frac{1}{r_n}a \right\Vert_\A +\left\Vert \frac{1}{r_n}a -a \right\Vert_\A < \frac{1}{n}+\left\Vert \frac{1}{r_n}a -a \right\Vert_\A.
     \end{equation*} Thus, the sequence $(a_n)_{n \in \N}$ converges to $a$ with respect to $\Vert \cdot \Vert_\A$ and $\Lip(a_n) \leq 1=\overline{\Lip}(a)$ for all $n \in \N$. Since  $a \neq 0_\A$ as $\overline{\Lip}(a)\neq 0$, up to dropping to a subsequence, we have that $\Vert a_n \Vert > 0$ for all $n \in \N$. Repeat the same process for $b$ to find a sequence $(b_n)_{n \in \N} \subseteq \sa{\cup_{n \in \N} \A_n}$ of non-zero terms such that $(b_n)_{n \in \N}$ converges to $b$ with respect to $\Vert \cdot \Vert_\A$, while $0<\Lip(b_n)\leq 1=\overline{\Lip}(b)$ for all $n \in \N$.  Now, for all $n \in \N$ we have that $a_nb_n +b_na_n \in \sa{\cup_{n \in \N}\A_n}$ such that $(a_nb_n +b_na_n )_{n \in \N}$ converges to $ab+ba$. Also, we gather that since $\Lip$ is quasi-Leibniz:
\begin{align*}
\Lip\left(\frac{a_nb_n +b_na_n}{2}\right)& \leq C(\Lip(a_n)\Vert b_n \Vert_\A + \Lip(b_n) \Vert a_n \Vert_\A )+D \Lip(a_n)\Lip(b_n)\\
& \leq C(\overline{\Lip}(a)\Vert b_n \Vert_\A + \overline{\Lip}(b)\Vert a_n \Vert_\A ) + D\overline{\Lip}(a)\overline{\Lip}(b)\\
& \leq C(\Vert b_n \Vert_\A + \Vert a_n \Vert_\A) + D,
\end{align*}
and since the right-hand side of the last inequality is non-zero, we have:  
\begin{equation*}
\Lip\left(\frac{a_nb_n +b_na_n}{2
\left(C(\Vert b_n \Vert_\A + \Vert a_n \Vert_\A) + D\right)}\right)\leq 1 \text{ for all } n \in \N.
\end{equation*}
Now, the sequence $\left(\frac{a_nb_n +b_na_n}{2
\left(C(\Vert b_n \Vert_\A + \Vert a_n \Vert_\A) + D\right)}\right)_{n \in \N}$ converges to $\frac{ab +ba}{2
\left(C(\Vert b \Vert_\A + \Vert a \Vert_\A) + D\right)}$ with respect to $\Vert \cdot \Vert_\A$ as all the scalars in the denominator are postive and converge to a positive scalar.  Thus, by Expression (\ref{minkowski-lip-ball-eq}), we have that:
\begin{align*}
\frac{ab +ba}{2
\left(C(\Vert b \Vert_\A + \Vert a \Vert_\A) + D\right)} & \in \overline{\left\{ c \in \sa{\cup_{n \in \N} \A_n} : \Lip(c)\leq 1\right\}}^{\Vert \cdot \Vert_\A}\\
&=\left\{ c\in \sa{\A} : \overline{\Lip}(c) \leq 1 \right\},
\end{align*}  
and thus:
\begin{equation}\label{quasi-lip-norm-1-eq}
\begin{split}
&\overline{\Lip}(a\circ b)=\overline{\Lip}\left(\frac{ab +ba}{2}\right) \leq C(\Vert b \Vert_\A + \Vert a \Vert_\A) + D \\
&  \text{ for all } a,b \in \sa{\A} \text{ such that } \overline{\Lip}(a)=1=\overline{\Lip}(b).
\end{split}
\end{equation}
The same holds true for the Lie product $\{a,b\}=\frac{ab-ba}{2i}$.  

Next, assume that $a,b \in \sa{\A}$ such that $\overline{\Lip}(a), \overline{\Lip}(b) \in (0, \infty) \subset \R$.  Hence, we have $\overline{\Lip}\left(\frac{1}{\overline{\Lip}(a)}a\right)=1=\overline{\Lip}\left(\frac{1}{\overline{\Lip}(b)}b\right).$ Thus:

\begin{align*}
\frac{1}{\overline{\Lip}(a)\overline{\Lip}(b)}\overline{\Lip}(a\circ b)&=\frac{1}{\overline{\Lip}(a)\overline{\Lip}(b)}\overline{\Lip}\left(\frac{ab+ba}{2}\right)\\
&= \overline{\Lip}\left( \frac{1}{\overline{\Lip}(a)}a \circ \frac{1}{\overline{\Lip}(b)}b\right) \\
&\leq C\left(\left\Vert \frac{1}{\overline{\Lip}(b)}b \right\Vert_\A + \left\Vert \frac{1}{\overline{\Lip}(a)}a \right\Vert_\A \right) + D\\
& = C\left(\frac{1}{\overline{\Lip}(b)}\Vert b \Vert_\A + \frac{1}{\overline{\Lip}(a)}\Vert a \Vert_\A \right) +D.
\end{align*}
 where Expression (\ref{quasi-lip-norm-1-eq}) was used in the last inequality.  Therefore:
 \begin{align*}
 \overline{\Lip}(a \circ b) & \leq \overline{\Lip}(a)\overline{\Lip}(b) \left( C\left(\frac{1}{\overline{\Lip}(b)}\Vert b \Vert_\A + \frac{1}{\overline{\Lip}(a)}\Vert a \Vert_\A \right) +D  \right)\\
 & =C \left( \overline{\Lip}(a) \Vert b \Vert_\A + \overline{\Lip}(b) \Vert a \Vert_\A \right) + D \overline{\Lip}(a)\overline{\Lip}(b)\\
 & \text{ for all } a,b \in \sa{\A} \text{ such that } \overline{\Lip}(a), \overline{\Lip}(b) \in (0, \infty),   
 \end{align*}
and the same holds for the Lie product $\{a,b\}$.  

Now, for $a, b\in \sa{\A}$, if either $\overline{\Lip}(a)=\infty$ or $\overline{\Lip}(b)=\infty$, then the conclusion is clear. Next, if $a,b \in \sa{\A}$ such that $\overline{\Lip}(a)=0=\overline{\Lip}(b)$, then $a,b \in \R1_\A$ and thus $a\circ b \in \R1_\A$ and $\overline{\Lip}(a \circ b)=0=\overline{\Lip}(\{a,b\})$, which concludes this case.   Finally, assume that $a,b \in \sa{\A}$,  $\overline{\Lip}(a)=0$ and $\overline{\Lip}(b) \in (0, \infty) \subset \R$.  Thus, $a=r1_\A$ for some $r \in \A$ and so $\Vert a\Vert_\A=\vert r\vert.$   Now: 
\begin{align*}\overline{\Lip}(a \circ b)&=   \overline{\Lip}\left(\frac{rb+rb}{2}\right)\\
&=\vert r \vert \overline{\Lip}(b)\\
&= \Vert a \Vert_\A \overline{\Lip}(b) \\
&\leq C\left( \Vert a \Vert_\A \overline{\Lip}(b) + \Vert b \Vert_\A \overline{\Lip}(a)\right) +D \overline{\Lip}(a)\overline{\Lip}(b)
\end{align*}
since $C \geq 1, D \geq 0$.
Also, we have  $\overline{\Lip}(\{a , b\})=   \overline{\Lip}\left(\frac{rb-rb}{2i}\right)=0$.  The same holds if the roles of $a$ and $b$ are switched.  Therefore, all cases are exhausted and the proof of the claim is complete
\end{proof}
Therefore, the pair $(\A, \overline{\Lip})$ is a $(C,D)$-quasi-Leibniz quantum compact metric space of Definition (\ref{quasi-Monge-Kantorovich-def}).  

For the C*-subalgebras. Fix $n \in \N.$  Since $\overline{\Lip}$ is a quasi-Leibniz Lip-norm defined on $\sa{\cup_{n \in \N} \A_n }$, it is routine to check that $\overline{\Lip}$ satisfies all but property (3) of Definition (\ref{quasi-Monge-Kantorovich-def}) on $\sa{\A_n}$.  To show  property (3), we begin by noting that  by Theorem (\ref{Rieffel-thm}) there exists some state $\mu \in \StateSpace(\A)$ such that the set $
 \Lipball{1}{\A, \overline{\Lip}, \mu }$ 
is totally bounded for $\Vert \cdot \Vert_\A$.  However, since the set:
\begin{equation*} \{ a \in \sa{\A_n} : \overline{\Lip} (a) \leq 1, \mu(a)=0\} \subseteq  \Lipball{1}{\A, \overline{\Lip}, \mu }
\end{equation*} and $\mu \in \StateSpace (\A_n)$, then by Theorem (\ref{Rieffel-thm}), the seminorm $\overline{\Lip}$ is a quasi-Leibniz Lip-norm on $\sa{\A_n} $.

Now, we prove convergence.  Let $ \varepsilon>0$. The fact that $\Lipball{1}{\A, \overline{\Lip} , \mu}$  is totally bounded by Theorem (\ref{Rieffel-thm}) implies that there exist $a_1 , \ldots , a_k \in  \Lipball{1}{\A, \overline{\Lip}, \mu }$ such that $
\Lipball{1}{\A, \overline{\Lip}, \mu} \subseteq \cup_{j=1}^k B_{\Vert \cdot \Vert_\A} \left(a_j , \varepsilon/3\right).$ By Expression (\ref{minkowski-lip-ball-eq}), for each $j \in \{1, \ldots , k\}$, there exist $a_j' \in \Lipball{1}{\A, \overline{\Lip}} \cap \sa{\cup_{n \in \N} \A_n}$ such that  $\Vert a_j-a_j' \Vert_\A < \varepsilon/3$,  and so $\vert \mu(a_j')\vert = \vert \mu(a_j-a_j')\vert \leq \Vert a_j-a_j' \Vert_\A < \varepsilon/3$
 since states are contractive by definition.  Hence: 
\begin{equation}\label{tot-bd}
\Lipball{1}{\A, \overline{\Lip}, \mu} \subseteq \cup_{j=1}^k B_{\Vert \cdot \Vert_\A} \left(a_j' , 2\varepsilon/3\right).
\end{equation}
Next, since $a_1' , \ldots , a_k' \in  \sa{\cup_{n \in \N} \A_n}$, let 
 $N=\min \{m  \in \N : \{a_1' , \ldots, a_k' \} \subseteq \A_m \}.$ 

 Fix $n \geq N$.  Let $a \in  \Lipball{1}{\A, \overline{\Lip} , \mu}$.  By Expression (\ref{tot-bd}), there exists $b \in \sa{\A_N} \subseteq \sa{\A_n }$ such that $b \in \Lipball{1}{\A_n, \overline{\Lip}}$, $\Vert a-b\Vert_\A < 2\varepsilon/3$, and $\vert \mu(b) \vert < \varepsilon/3$, where $\mu $ is seen as a state of $\A_n$.  Now, we have that $\mu(b) \in \R$  since $\mu$ is positive, and so $b-\mu(b)1_\A \in  \Lipball{1}{\A_n, \overline{\Lip},\mu}$ since Lip-norms vanish on scalars. Therefore: 
\begin{equation}\label{reach-ineq}
\left\Vert a-\left(b-\mu(b)1_\A\right)\right\Vert_\A \leq \Vert a-b\Vert_\A + \left\Vert \mu(b)1_\A \right\Vert_\A <  \varepsilon .
\end{equation}  In summary, for each $a \in \Lipball{1}{\A, \overline{\Lip} , \mu}$, there exists $c \in \Lipball{1}{\A_n, \overline{\Lip} , \mu}$ such that $\Vert a-c \Vert_\A < \varepsilon $ for $n \geq N$.
Now, if $a \in  \Lipball{1}{\A_n, \overline{\Lip}, \mu}$, then $a \in  \Lipball{1}{\A, \overline{\Lip} , \mu}$ and $\Vert a - a\Vert_\A =0< \varepsilon.$

Consider the bridge $\gamma=(\A , 1_\A , \mathrm{id}_\A , \iota_n )$ in the sense of Definition (\ref{bridge-def}),  where $\mathrm{id}_\A : \A \rightarrow \A$ is identity and $\iota_n : \A_n \rightarrow \A $ is inclusion.  But, since the pivot is $1_\A$, the height is $0$.  Now, combining Lemma (\ref{haus-bd}), Inequality (\ref{reach-ineq}) and the subsequent two sentences, we gather that the reach of the bridge is bounded by $ \varepsilon$.  Thus, by definition of length and Theorem-Definition (\ref{def-thm}), we conclude: 
\begin{equation*}
\qpropinquity{C,D} \left(\left(\A_n , \overline{\Lip} \right),\left(\A,\overline{\Lip} \right)\right) \leq \varepsilon, 
\end{equation*}
which establishes convergence.  The fact that $(\A_n, \Lip)=(\A_n, \overline{\Lip})$ for all $n \in \N$ is clear by Expression (\ref{minkowski-lip-ball-eq}), which completes the proof.
\end{proof}

In order for  Proposition (\ref{fd-approx-af-prop}) to have a powerful impact, we need to show that all unital AF algebras may be equipped with quasi-Leibniz Lip-norms. We show that this can be accomplished by using quotient norms and Rieffel's work on Leibniz seminorms and best approximations \cite{Rieffel11}. However,  we first prove a basic fact about certain Lip-norms in Proposition (\ref{complex-real-lip-prop}).  This fact is motivated by the observation that it can be the case that a candidate for a Lip-norm, $\Lip$, to be naturally defined on a unital dense subspace $\dom{\Lip}$ of $\A$ such that $\dom{\Lip} \cap \sa{\A}$ is  dense in $\sa{\A}$. Proposition (\ref{complex-real-lip-prop}) will allow us to verify a condition in this candidates natural setting of $\dom{\Lip}$ to induce a Lip-norm on $\dom{\Lip} \cap \sa{\A}$. An example of an application of Proposition (\ref{complex-real-lip-prop}) will be seen immediately in Theorem (\ref{AF-lip-norms-thm-best}).  But, first, a definition and some basic results about best approximations.

\begin{definition}
Let $\A$ be a Banach space with norm $\Vert \cdot \Vert_\A$.  We say that a norm closed subspace $\B \subseteq \A$ satisfies {\em best approximation } if for all $a \in \A$, there exists a $b_a \in \B$ such that  $\inf \{ \Vert a-b\Vert_\A : b \in \B \} = \Vert a-b_a\Vert_\A$, where $\Vert a \Vert_{\A/\B}= \inf \{ \Vert a-b\Vert_\A : b \in \B \} $ is the quotient norm.
\end{definition}
The following result is well-known.  However, we provide a proof.

\begin{lemma}\label{best-approx-sa-lemma}
Let $\A$ be a C*-algebra. If $\B \subseteq \A$ is a norm closed self-adjoint subspace of $\A$ that satisfies best approximation, then for all $a \in \sa{\A}$ there exists $b_a \in \sa{\B}$ such that the quotient norm $\Vert a \Vert_{\A/\B}= \Vert a-b_a \Vert_\A$.

Moreover, for all $a \in \sa{\A}$, the quotient norms $\Vert a \Vert_{\A/\B}=\Vert a \Vert_{\sa{\A}/\sa{\B}}$.
\end{lemma}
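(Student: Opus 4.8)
The plan is to exploit the self-adjointness of $\B$ by symmetrizing a best approximant. Fix $a \in \sa{\A}$. Since $\B$ satisfies best approximation, there is some $b \in \B$ with $\|a - b\|_\A = \|a\|_{\A/\B}$. Because $\B$ is self-adjoint, $b^\ast \in \B$ as well, and since $a = a^\ast$ we have $\|a - b^\ast\|_\A = \|(a-b)^\ast\|_\A = \|a-b\|_\A$. Now set $b_a = \frac{1}{2}(b + b^\ast) \in \sa{\B}$. Then $a - b_a = \frac{1}{2}\big((a-b) + (a-b^\ast)\big)$, so the triangle inequality gives $\|a - b_a\|_\A \leq \frac{1}{2}\big(\|a-b\|_\A + \|a-b^\ast\|_\A\big) = \|a\|_{\A/\B}$. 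On the other hand $b_a \in \B$ forces $\|a-b_a\|_\A \geq \|a\|_{\A/\B}$ by definition of the quotient norm, so equality holds; this proves the first assertion, with $b_a$ as the required element of $\sa{\B}$.

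For the ``moreover'' part, I would first observe that $\sa{\B} \subseteq \B$, so taking the infimum defining the quotient norm over the smaller set $\sa{\B}$ can only make it larger:
\begin{equation*}
\|a\|_{\sa{\A}/\sa{\B}} = \inf\{ \|a - b\|_\A : b \in \sa{\B} \} \geq \inf\{\|a - b\|_\A : b \in \B\} = \|a\|_{\A/\B}.
\end{equation*}
Conversely, the element $b_a \in \sa{\B}$ produced above satisfies $\|a - b_a\|_\A = \|a\|_{\A/\B}$, hence $\|a\|_{\sa{\A}/\sa{\B}} \leq \|a - b_a\|_\A = \|a\|_{\A/\B}$. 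Combining the two inequalities yields $\|a\|_{\sa{\A}/\sa{\B}} = \|a\|_{\A/\B}$ for every $a \in \sa{\A}$.

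I do not anticipate any genuine obstacle here: the argument is just the symmetrization trick combined with the definition of the quotient norm. The only point that deserves a passing remark is that $\sa{\B}$ is again norm closed (so the quotient expression $\|\cdot\|_{\sa{\A}/\sa{\B}}$ behaves as expected), which is immediate from $\B$ being norm closed together with the fact that the involution is an isometry.
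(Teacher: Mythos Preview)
Your proof is correct and follows essentially the same approach as the paper: both symmetrize a best approximant $b$ to $b_a=\tfrac{1}{2}(b+b^\ast)\in\sa{\B}$, use the triangle inequality together with $\|a-b^\ast\|_\A=\|a-b\|_\A$ to see $\|a-b_a\|_\A\leq\|a\|_{\A/\B}$, and then deduce the equality of the two quotient norms from the existence of $b_a\in\sa{\B}$.
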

\begin{proof}
Let $a \in \sa{\A}$.  By assumption, there exists $b \in \B$ such that $\Vert a \Vert_{\A/\B} = \Vert a-b\Vert_\A.$  Now, set $b_a=\frac{b+b^*}{2} \in \sa{\B}$ and:

\begin{align*}
\left\Vert a-\frac{b+b^*}{2} \right\Vert_\A & = \left\Vert \frac{1}{2}a-\frac{1}{2}b+\frac{1}{2}a-\frac{1}{2}b^* \right\Vert_\A \\
& \leq \frac{1}{2} \Vert a-b \Vert_\A +\frac{1}{2} \left\Vert(a-b)^*\right\Vert_\A \\
& = \frac{1}{2} \Vert a-b \Vert_\A +\frac{1}{2} \left\Vert a-b\right\Vert_\A \\
& = \Vert a-b\Vert_\A  \\
&=\Vert a \Vert_{\A/\B}\\
&=\inf \{ \Vert a-c\Vert_\A : c \in \B\} \leq \left\Vert a-\frac{b+b^*}{2} \right\Vert_\A.
\end{align*}
Therefore, we gather that $\Vert a \Vert_{\A/\B}=\left\Vert a-\frac{b+b^*}{2} \right\Vert_\A=\Vert a-b_a \Vert_\A.$

Next, let $a \in \sa{\A}$, then by the above, there exists $b_a \in \sa{\B}$ such that:
\begin{align*}
\Vert a-b_a \Vert_\A &= \Vert a \Vert_{\A/\B}\\
&= \inf \{\Vert a-b\Vert_\A : b \in \B \} \\
& \leq \inf \{\Vert a-b\Vert_\A : b \in \sa{\B} \} \\
& = \Vert a \Vert_{\sa{\A}/\sa{\B}} \leq \Vert a-b_a \Vert_\A,
\end{align*}
which completes the proof.
\end{proof}

\begin{proposition}\label{complex-real-lip-prop}
Let $\A$ be a unital C*-algebra and let $\Lip$ be a seminorm defined on some dense unital subspace $\dom{\Lip}$ of $\A$ such that $\dom{\Lip}\cap \sa{\A}$ is a dense subspace of $\sa{\A}$ and $\{ a \in \dom{\Lip} : \Lip(a) = 0\}=\C1_\A$.

 If the set $\left\{a +\C1_\A \in \A/\C1_\A : a \in \dom{\Lip}, \Lip(a) \leq 1 \right\}$ is totally bounded in $\A/\C1_\A$ for $\Vert \cdot \Vert_{\A/\C1_\A}$, then 
 the pair $(\A, \Lip)$ formed by the dense unital  subspace $\dom{\Lip} \cap \sa{\A}$ of $\sa{\A}$   and the restriction of $\Lip$ to $\dom{\Lip} \cap \sa{\A}$ is a quantum compact metric space.
\end{proposition}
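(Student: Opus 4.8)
The plan is to reduce the statement to criterion (3) of Theorem (\ref{Rieffel-thm}) applied to the restriction $\Lip|_{\dom{\Lip}\cap\sa{\A}}$: once I show that $\{a+\R\unit_\A : a\in\dom{\Lip}\cap\sa{\A},\ \Lip(a)\leq 1\}$ is totally bounded in $\sa{\A}/\R\unit_\A$, that theorem yields that $\Kantorovich{\Lip}$ metrizes the weak* topology of $\StateSpace(\A)$, and the remaining clauses of Definition (\ref{quasi-Monge-Kantorovich-def}) are read off the hypotheses. First I would dispatch the elementary points: $\dom{\Lip}\cap\sa{\A}$ is dense in $\sa{\A}$ by assumption and contains $\unit_\A$ (since $\Lip(\unit_\A)=0$), and the kernel of the restricted seminorm is $\dom{\Lip}\cap\sa{\A}\cap\C\unit_\A=\R\unit_\A$; this gives exactly the standing hypotheses of Theorem (\ref{Rieffel-thm}).

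The heart of the argument is transporting the given total boundedness from the complex quotient to the real quotient, and for that I would first prove that the two quotient norms coincide on self-adjoint elements: $\Vert a\Vert_{\A/\C\unit_\A}=\Vert a\Vert_{\sa{\A}/\R\unit_\A}$ for all $a\in\sa{\A}$. Writing $z=s+it$ with $s,t\in\R$ and using the C*-identity,
\begin{equation*}
\Vert a-z\unit_\A\Vert_\A^2=\bigl\Vert (a-s\unit_\A)^2+t^2\unit_\A\bigr\Vert_\A=\Vert a-s\unit_\A\Vert_\A^2+t^2\text{,}
\end{equation*}
where the last equality holds because $(a-s\unit_\A)^2$ and $t^2\unit_\A$ are commuting positive elements (compute inside the commutative C*-subalgebra generated by $a$ and $\unit_\A$). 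Thus the infimum over $z\in\C$ is attained at $t=0$, proving the identity, so $a+\R\unit_\A\mapsto a+\C\unit_\A$ is an isometric embedding of $\sa{\A}/\R\unit_\A$ into $\A/\C\unit_\A$. Under this embedding, $\{a+\R\unit_\A : a\in\dom{\Lip}\cap\sa{\A},\ \Lip(a)\leq 1\}$ is carried onto $\{a+\C\unit_\A : a\in\dom{\Lip}\cap\sa{\A},\ \Lip(a)\leq 1\}$, a subset of the totally bounded set $\{a+\C\unit_\A : a\in\dom{\Lip},\ \Lip(a)\leq 1\}$. Since subsets of totally bounded sets are totally bounded and an isometric bijection preserves total boundedness, criterion (3) of Theorem (\ref{Rieffel-thm}) holds for $\Lip|_{\dom{\Lip}\cap\sa{\A}}$, and hence $\Kantorovich{\Lip}$ metrizes the weak* topology on $\StateSpace(\A)$.

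To finish I would assemble Definition (\ref{quasi-Monge-Kantorovich-def}): $\dom{\Lip}\cap\sa{\A}$ is a dense unital (Jordan--Lie) subalgebra of $\sa{\A}$, the kernel of the restricted seminorm is $\R\unit_\A$, the Monge--Kantorovich metric induces the weak* topology by the previous paragraph, and lower semicontinuity is inherited from the ambient seminorm $\Lip$ (lower semicontinuity persists under restriction of the domain). The step I expect to demand the most care is the quotient-norm identity $\Vert a\Vert_{\A/\C\unit_\A}=\Vert a\Vert_{\sa{\A}/\R\unit_\A}$: a priori the infimum over complex scalars could be strictly smaller than the infimum over real scalars, and it is exactly the positivity-and-commutativity computation via the C*-identity that excludes this; everything downstream is routine manipulation of totally bounded sets together with a direct citation of Theorem (\ref{Rieffel-thm}).
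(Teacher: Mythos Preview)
Your strategy is correct and matches the paper's: verify the elementary hypotheses of Theorem~(\ref{Rieffel-thm}), show that the real and complex quotient norms agree on self-adjoint elements, and then transfer the given total boundedness from $\A/\C\unit_\A$ to $\sa{\A}/\R\unit_\A$ to invoke criterion~(3). The difference lies in how the key identity $\Vert a\Vert_{\A/\C\unit_\A}=\Vert a\Vert_{\sa{\A}/\R\unit_\A}$ for $a\in\sa{\A}$ is obtained. The paper does not compute it directly; instead it appeals to the preceding Lemma~(\ref{best-approx-sa-lemma}): since $\C\unit_\A$ is finite-dimensional it satisfies best approximation, and for a self-adjoint element any best approximant can be replaced by its real part, so the two quotient norms coincide. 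The paper then phrases the total-boundedness transfer via Cauchy subsequences rather than your isometric-embedding-and-subset argument, but these are equivalent. Your direct $C^\ast$-identity computation is more elementary and self-contained for this particular subspace $\C\unit_\A$, whereas the paper's lemma is stated for arbitrary closed self-adjoint subspaces with best approximation, so it trades a short calculation for a reusable general fact.

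One small overreach: in your final paragraph you assert that $\dom{\Lip}\cap\sa{\A}$ is a Jordan--Lie subalgebra and that lower semicontinuity is inherited from $\Lip$. Neither is part of the hypotheses of the proposition---$\dom{\Lip}$ is only assumed to be a subspace, and $\Lip$ is not assumed lower semicontinuous. The paper's proof is equally silent on these points and effectively reads ``quantum compact metric space'' here in the minimal Rieffel sense (conditions~(1) and~(3) of Definition~(\ref{quasi-Monge-Kantorovich-def})), which is all that Theorem~(\ref{Rieffel-thm}) delivers; in the application (Theorem~(\ref{AF-lip-norms-thm-best})) these extra properties are verified separately. So simply drop those two claims from your last paragraph and the argument is complete.
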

\begin{proof}
First, the set $\dom{\Lip} \cap \sa{\A}$ is a dense  subspace of $\sa{\A}$ and $\{ a \in \dom{\Lip} \cap \sa{\A}: \Lip(a) = 0 \} = \R1_\A$. Next, let $\left(a_n +\R1_\A \right)_{n \in \N} \subseteq \{a +\R1_\A \in \sa{\A}/\R1_\A : a \in \dom{\Lip}, \Lip(a) \leq 1\}$. The sequence $\left(a_n +\C1_\A\right)_{n \in \N} \subseteq \{a +\C1_\A \in \A/\C1_\A : a \in \dom{\Lip}, \Lip(a) \leq 1 \}$.  Hence, by assumption and total boundedness, there exists some Cauchy subsequence $\left(a_{n_k} +\C1_\A\right)_{k \in \N}$ with respect to $\Vert \cdot \Vert_{\A/\C1_\A}$.  

The space $\C1_\A$ is finite dimensional and therefore satisfies best approximation in $\A$.  Also, we have that $\sa{\C1_\A}=\R1_\A$.  Note that $a_n \in \sa{\A}$ for each $n \in \N$. Hence, by Lemma (\ref{best-approx-sa-lemma}) , the subsequence  $\left(a_{n_k} + \R1_\A\right)_{n\in \N}$ is Cauchy with respect to $\Vert \cdot \Vert_{\sa{\A}/\R1_\A}.$
\end{proof}

\begin{theorem}\label{AF-lip-norms-thm-best}
Let $\A$ be a unital AF algebra  with unit $1_\A$ such that $\mathcal{U} = (\A_n)_{n\in\N}$ is an increasing sequence of unital finite dimensional C*-subalgebras such that $\A=\overline{\cup_{n \in \N} \A_n}^{\Vert \cdot \Vert_\A}$, with $\A_0=\C 1_\A .$  For each $n \in \N$, we denote the quotient norm of $\A /\A_n $ with respect to $\Vert \cdot \Vert_\A$ by $S_n $.  Let $\beta: \N \rightarrow (0, \infty)$ have limit $0$ at infinity. 

If, for all $a \in  \A $, we set:
\begin{equation*}
\Lip_{\mathcal{U}}^\beta (a) =\sup \left\{ \frac{S_n (a)}{ \beta(n) } : n \in \N \right\},
\end{equation*}
then the domain of $\Lip_{\mathcal{U}}^\beta$ contains $\cup_{n \in \N} \A_n$, and 
\begin{enumerate}
\item using notation from Proposition (\ref{fd-approx-af-prop}), we have that $\left(\A, \Lip_{\mathcal{U}}^\beta \right)$, $\left(\A_n ,\Lip_{\mathcal{U}}^\beta \right)$, $\left(\A, \overline{\Lip_{\mathcal{U}}^\beta} \right)$,  and $\left(\A_n ,\overline{\Lip_{\mathcal{U}}^\beta} \right)$ for all $n \in \N$ are Leibniz quantum compact metric spaces where we view  $\Lip_{\mathcal{U}}^\beta$ restricted to $\sa{\A}$ such that
\item $
\lim_{ n \to \infty} \qpropinquity{1,0} \left(\left(\A_n ,\overline{\Lip_{\mathcal{U}}^\beta} \right),\left(\A, \overline{\Lip_{\mathcal{U}}^\beta} \right)\right)=0 
$ and 

$\lim_{ n \to \infty} \qpropinquity{1,0} \left(\left(\A_n ,\Lip_{\mathcal{U}}^\beta \right),\left(\A, \overline{\Lip_{\mathcal{U}}^\beta} \right)\right)=0.$ 
\end{enumerate}
\end{theorem}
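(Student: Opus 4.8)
The plan is to first determine $\dom{\Lip_{\mathcal{U}}^\beta}$, then apply Proposition (\ref{complex-real-lip-prop}) to obtain a quantum compact metric space, establish the Leibniz property directly from \cite{Rieffel11}, and finally feed the result into Proposition (\ref{fd-approx-af-prop}). Since $(\A_n)_{n\in\N}$ is increasing with $\A_0=\C1_\A$, an element of $\A_m$ satisfies $S_n(a)=0$ for $n\geq m$ and $S_n(a)\leq\|a\|_\A$ otherwise, so $\dom{\Lip_{\mathcal{U}}^\beta}\supseteq\cup_{n\in\N}\A_n$; hence $\dom{\Lip_{\mathcal{U}}^\beta}$ is a dense unital subspace of $\A$ with $\dom{\Lip_{\mathcal{U}}^\beta}\cap\sa{\A}\supseteq\cup_{n\in\N}\sa{\A_n}$ dense in $\sa{\A}$. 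Each $S_n$ is $1$-Lipschitz for $\|\cdot\|_\A$, so $\Lip_{\mathcal{U}}^\beta$ --- a supremum of positive multiples of the seminorms $S_n$ --- is a lower semicontinuous seminorm; and $S_n(a)=0$ exactly when $a\in\overline{\A_n}=\A_n$, so $\{a:\Lip_{\mathcal{U}}^\beta(a)=0\}=\bigcap_n\A_n=\A_0=\C1_\A$.

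The main step, and the only place where something beyond assembling known results is needed, is to prove that $B:=\{a+\C1_\A:a\in\A,\ \Lip_{\mathcal{U}}^\beta(a)\leq1\}$ is totally bounded in $\A/\C1_\A$. Write $q:\A\to\A/\C1_\A$ for the quotient map and let $E_n=q(\A_n)$, a finite-dimensional subspace. Because $\C1_\A\subseteq\A_n$, one has $S_n(a)=\dist(q(a),E_n)$; in particular, since $E_0=\{0\}$, every $a$ with $\Lip_{\mathcal{U}}^\beta(a)\leq1$ satisfies $\|q(a)\|_{\A/\C1_\A}=S_0(a)\leq\beta(0)$, so $B$ is norm-bounded. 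Now fix $\varepsilon>0$ and pick $N\in\N$ with $\beta(N)<\varepsilon/2$. For $x\in B$ there is $y\in E_N$ with $\|x-y\|<\varepsilon/2$ and $\|y\|\leq\beta(0)+\varepsilon/2$; the closed ball of radius $\beta(0)+\varepsilon/2$ in the finite-dimensional space $E_N$ is compact, so it has a finite $\varepsilon/2$-net, which is then a finite $\varepsilon$-net for $B$. Since $\C1_\A$ is finite-dimensional, hence satisfies best approximation, and $\sa{\C1_\A}=\R1_\A$, Proposition (\ref{complex-real-lip-prop}) (via Lemma (\ref{best-approx-sa-lemma})) now gives that $(\A,\Lip_{\mathcal{U}}^\beta)$, with $\Lip_{\mathcal{U}}^\beta$ restricted to $\sa{\A}$, is a quantum compact metric space.

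For the Leibniz property: by \cite{Rieffel11}, the quotient seminorm $S_n$ associated to the unital finite-dimensional C*-subalgebra $\A_n$ is a Leibniz seminorm on $\A$, i.e.\ $S_n(ab)\leq S_n(a)\|b\|_\A+\|a\|_\A S_n(b)$ for all $a,b\in\A$. Averaging the estimates for $ab$ and $ba$ shows that $S_n$ satisfies the $(1,0)$-quasi-Leibniz inequality of Definition (\ref{quasi-Monge-Kantorovich-def}) for the Jordan and Lie products on $\sa{\A}$, and a supremum of positive multiples of seminorms each obeying this inequality again obeys it; hence $\Lip_{\mathcal{U}}^\beta$ is a Leibniz Lip-norm and $(\A,\Lip_{\mathcal{U}}^\beta)$ is a Leibniz quantum compact metric space. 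The same argument, restricted to the finite-dimensional $\A_n$, handles $(\A_n,\Lip_{\mathcal{U}}^\beta)$, but this will also follow for free from the next step.

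Finally, I apply Proposition (\ref{fd-approx-af-prop}) with $C=1$, $D=0$, the Lip-norm $\Lip_{\mathcal{U}}^\beta$ on $\sa{\A}$, and the given sequence $(\A_n)_{n\in\N}$; its hypotheses hold because $\dom{\Lip_{\mathcal{U}}^\beta}\cap\sa{\A}\supseteq\sa{\cup_{n\in\N}\A_n}$. This produces $(\A,\overline{\Lip_{\mathcal{U}}^\beta})$ and $(\A_n,\overline{\Lip_{\mathcal{U}}^\beta})=(\A_n,\Lip_{\mathcal{U}}^\beta)$ as Leibniz quantum compact metric spaces and yields $\lim_{n\to\infty}\qpropinquity{1,0}((\A_n,\overline{\Lip_{\mathcal{U}}^\beta}),(\A,\overline{\Lip_{\mathcal{U}}^\beta}))=0$; the remaining limit coincides with this one since $(\A_n,\Lip_{\mathcal{U}}^\beta)=(\A_n,\overline{\Lip_{\mathcal{U}}^\beta})$ by Expression (\ref{minkowski-lip-ball-eq}). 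I expect the total-boundedness argument of the second paragraph to be the crux, where the normalization $\A_0=\C1_\A$ is exactly what forces the Lip-ball to be bounded modulo scalars; everything else is assembling Propositions (\ref{complex-real-lip-prop}) and (\ref{fd-approx-af-prop}) together with Rieffel's Leibniz theorem.
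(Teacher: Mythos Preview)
Your proposal is correct and follows essentially the same route as the paper: verify the elementary properties of $\Lip_{\mathcal{U}}^\beta$ (domain, kernel, lower semicontinuity), invoke \cite{Rieffel11} for the Leibniz property, prove total boundedness of the Lip-ball modulo $\C1_\A$ by approximating in a fixed finite-dimensional $\A_N$ once $\beta(N)$ is small, apply Proposition (\ref{complex-real-lip-prop}), and then feed everything into Proposition (\ref{fd-approx-af-prop}). Your total-boundedness argument is phrased directly in the quotient $\A/\C1_\A$ via the identity $S_n(a)=\dist(q(a),E_n)$, whereas the paper works with explicit best approximations $b_N(a)\in\A_N$ and bounds $S_0(b_N(a))$; these are cosmetic variants of the same $\varepsilon/3$-argument, and your version is arguably a bit cleaner since it handles all $a$ with $\Lip_{\mathcal{U}}^\beta(a)\leq 1$ at once rather than restricting first to $\cup_n\A_n$.
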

\begin{proof}
We begin by proving (1). By \cite[Theorem 3.1]{Rieffel11},  for all $n \in \N$, we have that since $\A_n$ is unital, the quotient norm $S_n $ satisfies condition (2) of Definition (\ref{quasi-Monge-Kantorovich-def})  for $C=1$, $D=0$, and therefore,  so does $\Lip_{\mathcal{U}}^\beta$.  Thus $\Lip_{\mathcal{U}}^\beta$ is a Leibniz seminorm.  

To show that the seminorm only vanishes on scalars, note that  $\A_0=\C1_\A \subsetneq \A_n$ for each $n \in \N \setminus \{0\}$ implies that  ${\Lip_{\mathcal{U}}^\beta}^{-1}( \{ 0\})  = \C 1_\A $. 

For the domain, let $a \in \cup_{n \in \N} \A_n  $, then there exists $N \in \N$ such that $a \in \A_k$ for all $k \geq N$.  Therefore, the seminorm $S_k (a)=0$ for all $ k \geq N$, and hence, the seminorm $\Lip_{\mathcal{U}}^\beta$ evaluated at $a$ is a supremum over finitely many terms, and is thus finite.   Therefore, the domain of $\Lip_\mathcal{U}^\beta$ contains $\cup_{n \in \N} \A_n$.

 Since quotient norms are continuous, we have that $\Lip_\mathcal{U}^\beta$ is lower semi-continuous as a supremum of continuous real-valued maps.  

Now, note that since $\beta$ is convergent, we have $K=\sup \{\beta(n) : n \in \N\} < \infty$. Let $q_0: \A \rightarrow \A/\A_0=\A/\C1_\A$ denote the quotient map.  Define: \begin{equation*}\Lip_1 =\left\{ a \in \cup_{n \in \N} \A_n  : \Lip_{\mathcal{U}}^\beta(a) \leq 1\right\}.
\end{equation*}  By way of Proposition (\ref{complex-real-lip-prop}), we now show that $q_0(\Lip_1)$ totally bounded with respect to the quotient norm on $\A/\C1_\A $, in which the quotient norm is simply $S_0$ since $\A_0 =\C1_\A$.   Let $ \varepsilon >0 $.  By definition of $ \Lip_{\mathcal{U}}^\beta$, there exists $N \in \N$ such that  $\beta(N) < \varepsilon/3 $, so that $S_N(a) \leq \beta(N) < \varepsilon/3$ for all $ a\in \Lip_1 $.   Since $\A_N$ is a finite dimensional subspace, there exists a best approximation to $a$ in $\A_N$ for all $a\in \mathsf{L}_1$.   Thus, for all $a \in \Lip_1$, by axiom of choice, set $b_N(a) \in \A_N  $ to be one best approximation of $a$.  Define: 
\begin{equation*}
B_N = \{ b_N (a) \in \A_N  : a \in \Lip_1 \}.
\end{equation*}
If $a \in \Lip_1$, then since $\A_0=\C1_\A$ : 
\begin{align*}
S_0 (b_N(a)) & = 
 \inf \{\Vert b_N(a) - \lambda1_\A \Vert_\A: \lambda \in \C\}\\
 & = \inf \{ \Vert b_N (a) - a + a - \lambda 1_\A \Vert_\A : \lambda \in \C \} \\
& \leq \Vert b_N (a) - a \Vert_\A + \inf \{ \Vert a-\lambda 1_\A \Vert : \lambda \in \C \}\\
 & = S_N (a) +S_0 (a)\\
 & \leq \beta(N) +\beta(0) \leq 2K.
\end{align*}

Hence, the set $q_0(B_N)\subset \A_N /\C1_\A$ is bounded with respect $S_0$ on $\A_N/\C1_\A$, and therefore totally bounded with respect to $S_0$ on $\A_N/\C1_\A$ since $\A_N$ is finite dimensional. Let $F_N$ be a finite $\varepsilon/3$-net of $q_0 (B_N)$, so let $f_N=\{ b_N (a_1 ), \ldots , b_N (a_n)\in \A_N : a_j \in \Lip_1  , 1, \leq j \leq n< \infty \}$ such that $F_N=q_0(f_N)$.  

We claim that $q_0\left(\left\{a_1, \ldots, a_n\right\}\right)$ is  a finite $\varepsilon$-net for $q_0 (\Lip_1) $.  Indeed, let $a \in \Lip_1$, then $b_N (a) \in B_N$, so there exists $b_N(a_j) \in f_N$ such that $S_0 (b_N (a) - b_N (a_j))<  \varepsilon/3$.  Therefore: 
\begin{align*}
S_0 (a - a_j)& \leq S_0 (a-b_N(a) ) + S_0 (b_N (a) - b_N (a_j)) + S_0 (b_N (a_j ) - a_j ) \\
&  \leq \Vert a-b_N (a) \Vert_\A + \varepsilon/3 + \Vert b_N (a_j ) - a_j \Vert_\A \\
& = S_N (a) + \varepsilon/3 + S_N (a_j ) < \varepsilon.
\end{align*}
Hence, the set $q_0\left(\left\{a_1, \ldots, a_n\right\}\right)$ serves as a finite $\varepsilon$-net for $q_0 (\Lip_1) $.  Therefore,  by Proposition (\ref{complex-real-lip-prop}), the  
 pair $\left(\A, \Lip_{\mathcal{U}}^\beta \right)$ is a Leibniz quantum compact metric space, where we view  $\Lip_{\mathcal{U}}^\beta$ restricted to $\sa{\A}$. 

The remaining conclusions follow by Proposition (\ref{fd-approx-af-prop}).
\end{proof}

We note that (2) of Theorem (\ref{AF-lip-norms-thm-best}) is not obtained from an inequality like that of  (1), (2) of Theorem (\ref{AF-lip-norms-thm}), and we suspect that in general (2) of Theorem (\ref{AF-lip-norms-thm-best}) cannot be obtained from an inequality.  This is because it does not necessarily follow that for $ a\in \A, \Lip_\mathcal{U}^\beta(a) \leq 1$ we have that $\Lip_\mathcal{U}^\beta(b_n(a))\leq 1$ for any best approximation of $a$ in $\A_n$ for all $n \in \N$, which was a crucial step for the the inequality of Theorem (\ref{AF-lip-norms-thm}) achieved by conditional expectations rather than best approximations. This highlights a vital strength of the faithful tracial state case with the Lip-norms from Theorem (\ref{AF-lip-norms-thm}) since the inequality (1) of Theorem (\ref{AF-lip-norms-thm}) is crucial for our convergence results of AF algebras as seen in the proof of Theorem (\ref{af-cont}).  But, the Lip-norms of Theorem (\ref{AF-lip-norms-thm-best}) are vital for the general theory of AF algebras as quantum metric spaces to provide natural finite dimensional approximations  in propinquity for all unital AF algebras. 
\begin{remark}
Proposition (\ref{fd-approx-af-prop}) also provides Leibniz Lip-norms for any unital AF algebra from the Leibniz Lip-norms constructed in \cite[Theorem 2.1]{Antonescu04} for which the finite dimensional C*-algebras converge in the quantum propinquity.  Indeed, their construction of Lip-norms only require the existence of a faithful state, which exists on any  separable C*-algebra \cite[3.7.2]{Pedersen79}.
\end{remark}

\begin{remark}
 Proposition (\ref{fd-approx-af-prop}) and Theorem (\ref{AF-lip-norms-thm-best}) can be easliy translated to the inductive limit setting.
\end{remark}

\section{Quantum Isometries of AF algebras}

 We find sufficient conditions that provide quantum isometries (Theorem\\-Definition (\ref{def-thm})) between AF algebras with the Lip-norms from Theorem (\ref{AF-lip-norms-thm}), or when their distance is $0$ in the quantum propinquity, or when they produce the same equivlance classes that form the quantum propinqtuiy metric space. First, this is motivated by Bratteli's conditions for *-isomorphisms for AF algebras \cite[Theorem 2.7]{Bratteli72}.  Second, Inequality (\ref{limsup}) of Theorem (\ref{af-cont}) along with the convergence results of \cite{AL} display the importance of providing quantum isometries not only at the level of the entire AF algebra, but also at the level of the finite-dimensional C*-subalgebras.

We now present conditions for quantum isometries for AF algebras in the faithful tracial state case. We note that the hypotheses of the theorem are natural since they are chosen specifically to preserve the trace and the finite-dimensional structure of the AF algebra, which are the ingredients used to construct the Lip-norms. Also, Theorem (\ref{q-iso-trace-thm}) is used in \cite{Aguilar16} to find appropriate inductive limits that are quantum isometric to quotients. This is vital for convergence results since the inductive limit setting is more appropriate to provide convergence as seen in Section (\ref{conv-af})  and since most of our examples thus far are presented in the inductive limit setting.   But, first, we recast the Lip-norms from Theorem (\ref{AF-lip-norms-thm}) in the closure of union case instead of the inductive limit case to ease notation for the rest of the paper.   This causes no issues as presented in Proposition (\ref{ind-lim-union}).

\begin{theorem}\label{AF-lip-norms-thm-union} 
Let $\A$ be a unital AF algebra  with unit $1_\A$ endowed with a faithful tracial state $\mu$. Let $\mathcal{U} = (\A_n)_{n\in\N}$ be an increasing sequence of unital finite dimensional C*-subalgebras such that $\A=\overline{\cup_{n \in \N} \A_n}^{\Vert \cdot \Vert_\A}$ with $\A_0=\C 1_\A $.

Let $\pi$ be the GNS representation of $\A$ constructed from $\mu$ on the space $L^2(\A,\mu)$.

For all $n\in\N$, let:
\begin{equation*}
\CondExp{\cdot}{\A_n} : \A\rightarrow\A
\end{equation*}
be the unique conditional expectation of $\A$ onto $\A_n$ , and such that $\mu\circ\CondExp{\cdot}{\A_n} = \mu$.

Let $\beta: \N\rightarrow (0,\infty)$ have limit $0$ at infinity. If, for all $a\in\sa{\cup_{n \in \N} \A_n }$, we set:
\begin{equation*}
\Lip_{\mathcal{U},\mu}^\beta(a) = \sup\left\{\frac{\left\|a - \CondExp{a}{\A_n}\right\|_\A}{\beta(n)} : n \in \N \right\},
\end{equation*}
then $\left(\A,\Lip_{\mathcal{U},\mu}^\beta\right)$ is a {\Qqcms{2}}. Moreover, for all $n\in\N$:
\begin{equation*}
\qpropinquity{}\left(\left(\A_n,\Lip_{\mathcal{U},\mu}^\beta \right), \left(\A,\Lip_{\mathcal{U},\mu}^\beta \right)\right) \leq \beta(n)
\end{equation*}
and thus:
\begin{equation*}
\lim_{n\rightarrow\infty} \qpropinquity{}\left(\left(\A_n,\Lip_{\mathcal{U},\mu}^\beta \right), \left(\A,\Lip_{\mathcal{U},\mu}^\beta\right)\right) = 0\text{.}
\end{equation*}
\end{theorem}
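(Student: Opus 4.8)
The plan is to reduce this statement to Theorem (\ref{AF-lip-norms-thm}) by passing from the ``closure of union'' presentation to an inductive sequence presentation, observing that the two setups give literally the same data. Given the increasing sequence $\mathcal{U} = (\A_n)_{n\in\N}$ of unital finite dimensional C*-subalgebras with $\A_0 = \C 1_\A$, define an inductive sequence $\mathcal{I} = (\A_n, \alpha_n)_{n\in\N}$ where $\alpha_n : \A_n \hookrightarrow \A_{n+1}$ is the inclusion map. Each $\alpha_n$ is unital and injective, $\A_0 \cong \C$, and the C*-inductive limit $\varinjlim\mathcal{I}$ is canonically $\ast$-isomorphic to $\A = \overline{\cup_{n\in\N}\A_n}^{\Vert\cdot\Vert_\A}$, with the canonical maps $\indmor{\alpha}{n} : \A_n \to \A$ being exactly the inclusions $\A_n \hookrightarrow \A$. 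Under this identification, $\indmor{\alpha}{n}(\A_n) = \A_n$ as a subalgebra of $\A$, so the conditional expectation $\CondExp{\cdot}{\indmor{\alpha}{n}(\A_n)}$ appearing in Theorem (\ref{AF-lip-norms-thm}) coincides with $\CondExp{\cdot}{\A_n}$ here, and $\sa{\cup_{n\in\N}\indmor{\alpha}{n}(\A_n)} = \sa{\cup_{n\in\N}\A_n}$. Consequently $\Lip_{\mathcal{I},\mu}^\beta$ and $\Lip_{\mathcal{U},\mu}^\beta$ are the same seminorm on the same domain, and $\Lip_{\mathcal{I},\mu}^\beta \circ \indmor{\alpha}{n} = \Lip_{\mathcal{U},\mu}^\beta$ restricted to $\sa{\A_n}$.

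With this dictionary in place, the first assertion — that $(\A, \Lip_{\mathcal{U},\mu}^\beta)$ is a {\Qqcms{2}} — is immediate from the corresponding conclusion of Theorem (\ref{AF-lip-norms-thm}), since the GNS representation, the normalization condition $\mu\circ\CondExp{\cdot}{\A_n} = \mu$, and the hypothesis on $\beta$ all transfer verbatim. Similarly, Inequality (1) of Theorem (\ref{AF-lip-norms-thm}) reads
\begin{equation*}
\qpropinquity{}\left(\left(\A_n,\Lip_{\mathcal{I},\mu}^\beta\circ\indmor{\alpha}{n}\right),\left(\A,\Lip_{\mathcal{I},\mu}^\beta\right)\right) \leq \beta(n),
\end{equation*}
and translating the Lip-norms as above gives exactly
\begin{equation*}
\qpropinquity{}\left(\left(\A_n,\Lip_{\mathcal{U},\mu}^\beta\right),\left(\A,\Lip_{\mathcal{U},\mu}^\beta\right)\right) \leq \beta(n)
\end{equation*}
for all $n\in\N$. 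The limit statement then follows since $\beta(n) \to 0$ as $n\to\infty$.

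The only genuine point requiring care — the ``main obstacle,'' though it is a mild one — is verifying that the inductive-limit construction with inclusion connecting maps really does reproduce the closure-of-union C*-algebra together with all the attendant structure, so that no hidden normalization or identification is lost; this is the content of the promised Proposition (\ref{ind-lim-union}), and I would either invoke it directly or spell out the canonical $\ast$-isomorphism $\varinjlim(\A_n,\iota_n) \cong \overline{\cup_n \A_n}$ and check it intertwines the tracial states and conditional expectations. Everything else is a transcription. Thus the proof is essentially: ``apply Theorem (\ref{AF-lip-norms-thm}) to the inductive sequence of inclusions, and unwind the notation via Proposition (\ref{ind-lim-union}).''
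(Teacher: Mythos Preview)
Your proposal is correct and aligns with the paper's own treatment: the paper's proof is the single sentence ``The proof follows the same process of the proof of Theorem (\ref{AF-lip-norms-thm}) found in \cite[Theorem 3.5]{AL},'' and your reduction via the inductive sequence of inclusions is exactly the mechanism that makes that sentence true. The only minor point is that Proposition (\ref{ind-lim-union}) appears \emph{after} Theorem (\ref{AF-lip-norms-thm-union}) in the paper and is stated in the opposite direction (from an inductive sequence to a closure-of-union), so if you want to match the paper's logical order you should, as you already note, spell out the canonical identification $\varinjlim(\A_n,\iota_n)\cong\overline{\cup_n\A_n}$ directly rather than cite it; the content is routine either way.
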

\begin{proof}
The proof follows the same process of the proof of Theorem (\ref{AF-lip-norms-thm}) found in \cite[Theorem 3.5]{AL}.
\end{proof}
The Lip-norms of Theorem (\ref{AF-lip-norms-thm-union}) are compatible with the Lip-norms in the inductive limit case of Theorem (\ref{AF-lip-norms-thm}). The next Proposition (\ref{ind-lim-union}) establishes what we mean by this.

\begin{proposition}\label{ind-lim-union}
Let $\A$ be a unital AF algebra endowed with a faithful tracial state $\mu$. Let $\mathcal{I} = (\A_n,\alpha_n)_{n\in\N}$ is an inductive sequence of finite dimensional C*-algebras with C*-inductive limit $\A$, with $\A_0 \cong \C$ and where $\alpha_n$ is unital and injective for all $n\in\N$. Let $\beta: \N\rightarrow (0,\infty)$ have limit $0$ at infinity.

If we define $\mathcal{U} = (\indmor{\alpha}{n}(\A_n))_{n \in \N}$, then  the sequence $\mathcal{U}$ is an increasing sequence of unital finite dimensional C*-subalgebras of $\A$ such that $\A=\overline{\cup_{n \in \N} \indmor{\alpha}{n}(\A_n) }^{\Vert \cdot \Vert_\A}$ with $\indmor{\alpha}{0}(\A_0)=\C 1_\A $,   and the Lip-norms $\Lip_{\mathcal{I},\mu}^\beta=\Lip_{\mathcal{U},\mu}^\beta$, where $ \Lip_{\mathcal{I},\mu}^\beta$ is defined by Theorem (\ref{AF-lip-norms-thm}) and  $\Lip_{\mathcal{U},\mu}^\beta$ is defined by  (\ref{AF-lip-norms-thm-union}).\end{proposition}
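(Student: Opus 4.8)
The plan is to reduce the statement to standard facts about C*-algebraic inductive limits together with the uniqueness of $\mu$-preserving conditional expectations, so that the two seminorms turn out to be given by literally the same formula on the same domain.

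First I would record the structure of $\mathcal{U} = (\indmor{\alpha}{n}(\A_n))_{n\in\N}$. From the defining relation $\indmor{\alpha}{n} = \indmor{\alpha}{n+1}\circ\alpha_n$ of the canonical maps (Notation (\ref{ind-lim}) and \cite[Chapter 6.1]{Murphy90}), for $a\in\A_n$ one has $\indmor{\alpha}{n}(a) = \indmor{\alpha}{n+1}(\alpha_n(a)) \in \indmor{\alpha}{n+1}(\A_{n+1})$, so $\mathcal{U}$ is increasing; since each $\alpha_n$ is injective and unital, each $\indmor{\alpha}{n}$ is an injective unital $\ast$-homomorphism, hence isometric, so $\indmor{\alpha}{n}(\A_n)$ is a unital finite-dimensional C*-subalgebra of $\A$ ($\ast$-isomorphic to $\A_n$) with $\indmor{\alpha}{n}(1_{\A_n}) = 1_\A$; density of $\cup_{n\in\N}\indmor{\alpha}{n}(\A_n)$ in $\A$ is the definition of the C*-inductive limit; and $\A_0 \cong \C$ forces $\indmor{\alpha}{0}(\A_0) = \C\,\indmor{\alpha}{0}(1_{\A_0}) = \C 1_\A$. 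This proves the first assertion of the Proposition and, in particular, shows that $\mathcal{U}$ satisfies the hypotheses of Theorem (\ref{AF-lip-norms-thm-union}).

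Next I would identify the two families of conditional expectations. Under the present identifications the $n$-th subalgebra in Theorem (\ref{AF-lip-norms-thm-union}) is $\indmor{\alpha}{n}(\A_n)$, and $\CondExp{\cdot}{\A_n}$ there is by definition the \emph{unique} conditional expectation of $\A$ onto $\indmor{\alpha}{n}(\A_n)$ satisfying $\mu\circ\CondExp{\cdot}{\A_n} = \mu$ --- uniqueness being available precisely because $\mu$ is faithful (cf. \cite[Definition 1.5.9]{Brown-Ozawa}). This is exactly the description of the map $\CondExp{\cdot}{\indmor{\alpha}{n}(\A_n)}$ appearing in Theorem (\ref{AF-lip-norms-thm}), so the two maps coincide for every $n$. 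Consequently both $\Lip_{\mathcal{I},\mu}^\beta$ and $\Lip_{\mathcal{U},\mu}^\beta$ are defined on the common domain $\sa{\cup_{n\in\N}\indmor{\alpha}{n}(\A_n)}$ and send such an $a$ to the single quantity $\sup\bigl\{\,\|a - \CondExp{a}{\indmor{\alpha}{n}(\A_n)}\|_\A / \beta(n) : n\in\N\,\bigr\}$; therefore $\Lip_{\mathcal{I},\mu}^\beta = \Lip_{\mathcal{U},\mu}^\beta$, and the quantum metric structures (including the convergence estimates) transport verbatim from Theorem (\ref{AF-lip-norms-thm}). The only step that needs any real care is the identification of the conditional expectations, which rests entirely on the uniqueness afforded by faithfulness of $\mu$; everything else is bookkeeping with the universal property of the inductive limit.
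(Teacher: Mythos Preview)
Your proposal is correct and follows essentially the same approach as the paper: the paper's proof simply cites \cite[Chapter 6.1]{Murphy90} for the inductive-limit structure of $\mathcal{U}$ and then asserts that the equality of Lip-norms ``follows by definition,'' which is precisely what you have unpacked by invoking uniqueness of the $\mu$-preserving conditional expectation to see that the two formulas coincide term by term. Your write-up is more detailed than the paper's, but the underlying argument is the same.
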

\begin{proof}
By \cite[Chapter 6.1]{Murphy90}, the sequence $\mathcal{U}$ is an increasing sequence of unital finite dimensional C*-subalgebras of $\A$ such that $\A=\overline{\cup_{n \in \N} \indmor{\alpha}{n}(\A_n) }^{\Vert \cdot \Vert_\A}$ with $\indmor{\alpha}{0}(\A_0)=\C 1_\A $.  The equality of the Lip-norms $\Lip_{\mathcal{I},\mu}^\beta=\Lip_{\mathcal{U},\mu}^\beta$ follows by definition.
\end{proof}
Now, we study the faithful tracial state case.
\begin{theorem}\label{q-iso-trace-thm}
Let  $\A$ be a unital AF algebra  with unit $1_\A$ endowed with a faithful tracial state $\mu$. Let $\mathcal{U} = (\A_n)_{n\in\N}$ be an increasing sequence of unital finite dimensional C*-subalgebras such that $\A=\overline{\cup_{n \in \N} \A_n}^{\Vert \cdot \Vert_\A}$ with $\A_0=\C 1_\A $.  Let $\B$ be a unital AF algebra  with unit $1_\B$ endowed with a faithful tracial state $\nu$ and $\mathcal{V} = (\B_n)_{n\in\N}$ be an increasing sequence of unital finite dimensional C*-subalgebras such that $\B=\overline{\cup_{n \in \N} \B_n}^{\Vert \cdot \Vert_\B}$ with $\B_0=\C 1_\B .$ Let $\beta : \N \rightarrow (0,\infty)$ have limit $0$ at infinity.  Let $\Lip_{\mathcal{U}, \mu}^\beta, \Lip_{\mathcal{V}, \nu}^\beta$ denote the associated $(2,0)$-quasi-Leibniz Lip-norms from Theorem (\ref{AF-lip-norms-thm-union}) on $\A, \B$ respectively.

If $\phi : \A \hookrightarrow \B$ is a unital *-monomorphism such that the following hold:
\begin{enumerate}
\item $\phi(\A_n) =\B_n $ for all $n \in \N$, and
\item $\mu =\nu \circ \phi$,
\end{enumerate}
then: 
\begin{equation*}
\phi : \left(\A, \Lip_{\mathcal{U}, \mu}^\beta\right) \longrightarrow \left(\B,  \Lip_{\mathcal{V}, \nu}^\beta \right)
\end{equation*}
is a quantum isometry of Theorem-Definition (\ref{def-thm}) and: 
\begin{equation*}
\qpropinquity{2,0} \left(\left(\A, \Lip_{\mathcal{U}, \mu}^\beta\right), \left(\B,  \Lip_{\mathcal{V}, \nu}^\beta\right)\right)=0.
\end{equation*}
Moreover, for all $n \in \N$, we have:
\begin{equation*}
\qpropinquity{2,0} \left(\left(\A_n, \Lip_{\mathcal{U}, \mu}^\beta\right), \left(\B_n,  \Lip_{\mathcal{V}, \nu}^\beta\right)\right)=0.
\end{equation*}
\end{theorem}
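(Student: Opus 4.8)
The plan is to verify that $\phi$ transports $\Lip_{\mathcal{U},\mu}^\beta$ exactly onto $\Lip_{\mathcal{V},\nu}^\beta$, and then invoke part~(5) of Theorem-Definition~(\ref{def-thm}): once $\phi$ is known to be a $\ast$-isomorphism $\A\to\B$ satisfying $\Lip_{\mathcal{V},\nu}^\beta\circ\phi=\Lip_{\mathcal{U},\mu}^\beta$, it is a quantum isometry and $\qpropinquity{2,0}$ between the two spaces vanishes, and the finite-dimensional assertion will follow by restriction. First I would note that $\phi$ is in fact a $\ast$-isomorphism, not merely a monomorphism: an injective $\ast$-homomorphism of C*-algebras is isometric, so $\phi(\A)$ is a closed C*-subalgebra of $\B$ containing $\phi\left(\cup_{n\in\N}\A_n\right)=\cup_{n\in\N}\B_n$, which is dense in $\B$ by hypothesis~(1); hence $\phi(\A)=\B$. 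By~(1), $\phi$ also restricts to a $\ast$-isomorphism $\A_n\to\B_n$ for each $n$, and carries $\sa{\cup_{n\in\N}\A_n}$ onto $\sa{\cup_{n\in\N}\B_n}$.

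The key step — the one I expect to be the main, though not deep, obstacle — is the intertwining identity
\begin{equation*}
\CondExp{\phi(a)}{\B_n}=\phi\left(\CondExp{a}{\A_n}\right)\qquad\text{for all }a\in\A,\ n\in\N.
\end{equation*}
I would derive it from the uniqueness of the trace-preserving conditional expectation: since $\mu$ is faithful, $\CondExp{a}{\A_n}$ is the orthogonal projection of $a$ onto $\A_n$ inside $L^2(\A,\mu)$, equivalently the unique element $y\in\A_n$ with $\mu(w^\ast y)=\mu(w^\ast a)$ for all $w\in\A_n$, and likewise for $(\B,\nu,\B_n)$. Now fix $z\in\B_n$ and write $z=\phi(w)$ with $w\in\A_n$, using~(1). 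Using that $\phi$ is a $\ast$-homomorphism and $\nu\circ\phi=\mu$ by~(2),
\begin{equation*}
\nu\left(z^\ast\,\phi\left(\CondExp{a}{\A_n}\right)\right)=\nu\left(\phi\left(w^\ast\CondExp{a}{\A_n}\right)\right)=\mu\left(w^\ast\CondExp{a}{\A_n}\right)=\mu\left(w^\ast a\right)=\nu\left(z^\ast\phi(a)\right).
\end{equation*}
Since $\phi\left(\CondExp{a}{\A_n}\right)\in\phi(\A_n)=\B_n$, the uniqueness characterization of $\CondExp{\phi(a)}{\B_n}$ forces the identity.

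Since $\phi$ is isometric, the identity gives $\left\|\phi(a)-\CondExp{\phi(a)}{\B_n}\right\|_\B=\left\|a-\CondExp{a}{\A_n}\right\|_\A$ for every $a\in\A$ and $n\in\N$; dividing by $\beta(n)$ and taking the supremum over $n$,
\begin{equation*}
\Lip_{\mathcal{V},\nu}^\beta(\phi(a))=\Lip_{\mathcal{U},\mu}^\beta(a)\qquad\text{for all }a\in\dom{\Lip_{\mathcal{U},\mu}^\beta}=\sa{\cup_{n\in\N}\A_n}.
\end{equation*}
As $\phi$ maps this domain onto $\dom{\Lip_{\mathcal{V},\nu}^\beta}=\sa{\cup_{n\in\N}\B_n}$, this means $\Lip_{\mathcal{V},\nu}^\beta\circ\phi=\Lip_{\mathcal{U},\mu}^\beta$ as seminorms on the self-adjoint parts, so part~(5) of Theorem-Definition~(\ref{def-thm}) shows $\phi$ is a quantum isometry and $\qpropinquity{2,0}\left(\left(\A,\Lip_{\mathcal{U},\mu}^\beta\right),\left(\B,\Lip_{\mathcal{V},\nu}^\beta\right)\right)=0$. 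For the finite-dimensional statement, restrict the displayed equality of Lip-norms to $a\in\sa{\A_n}$: then $\phi|_{\A_n}\colon\A_n\to\B_n$ is a $\ast$-isomorphism with $\Lip_{\mathcal{V},\nu}^\beta\circ\phi|_{\A_n}=\Lip_{\mathcal{U},\mu}^\beta|_{\sa{\A_n}}$, and since $\left(\A_n,\Lip_{\mathcal{U},\mu}^\beta\right)$ and $\left(\B_n,\Lip_{\mathcal{V},\nu}^\beta\right)$ lie in $\mathcal{QQCMS}_{2,0}$ by Theorem~(\ref{AF-lip-norms-thm-union}), part~(5) of Theorem-Definition~(\ref{def-thm}) again yields $\qpropinquity{2,0}\left(\left(\A_n,\Lip_{\mathcal{U},\mu}^\beta\right),\left(\B_n,\Lip_{\mathcal{V},\nu}^\beta\right)\right)=0$.
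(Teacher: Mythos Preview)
Your proof is correct and follows the same overall architecture as the paper: establish that $\phi$ is a $\ast$-isomorphism, prove that $\phi$ intertwines the trace-preserving conditional expectations, deduce $\Lip_{\mathcal{V},\nu}^\beta\circ\phi=\Lip_{\mathcal{U},\mu}^\beta$, and invoke Theorem-Definition~(\ref{def-thm}). The only genuine difference lies in how the intertwining identity $\CondExp{\phi(a)}{\B_n}=\phi\left(\CondExp{a}{\A_n}\right)$ is verified. The paper uses the explicit matrix-unit formula for the conditional expectation from Proposition~(\ref{cond-cont}) and unwinds both sides term by term, while you appeal to the abstract characterization of $\CondExp{\cdot}{\A_n}$ as the unique element of $\A_n$ satisfying $\mu(w^\ast y)=\mu(w^\ast a)$ for all $w\in\A_n$. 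Your route is shorter and avoids choosing matrix units or a specific $\ast$-isomorphism with a matrix algebra; the paper's route has the advantage of reusing machinery already developed for the convergence results in Section~\ref{conv-af}. For the surjectivity of $\phi$, the paper cites Bratteli's \cite[Theorem~2.7]{Bratteli72}, whereas your direct density argument is equally valid and self-contained.
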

\begin{proof}
Fix $ a \in \A $.  Let $n \in \N$.  Since $\B_n $ is finite dimensional, the C*-algebra $\B_n \cong \oplus_{j=1}^N \M(n(j))$ for some $N \in \N$ and $n(1), \ldots , n(N) \in \N \setminus \{0\}$ with *-isomorphism $\pi: \oplus_{j=1}^N \M(n(j)) \longrightarrow \B_n  $.   Let $E$ be the set of matrix units for  $\oplus_{j=1}^N \M(n(j))$ given in Notation (\ref{matrix-units}).  Define $E_\pi = \{ \pi (b) \in \B_n  : b \in E \}$. . Furthermore, since $\phi: \A \hookrightarrow \B$ is a *-monomorphism that satisfies hypothesis (1), the map $\phi: \A \rightarrow \B$ is a *-isomorphism by \cite[Theorem 2.7]{Bratteli72}. Hence,  by Proposition (\ref{cond-cont}) and  $\mu=\nu \circ \phi \iff \mu \circ \phi^{-1} =\nu$, we gather that: 
\begin{align*}
\quad \left\Vert \phi(a) - \CondExp{\phi(a)}{\B_n}\right\Vert_{ \B} 
& = \left\Vert \phi(a) -\sum_{e \in E_\pi} \frac{\nu \left(e^* \phi(a)\right)}{\nu  \left(e^*e \right)}e \right\Vert_{ \B}\\
&= \left\Vert \phi(a) -\sum_{e \in E_\pi} \frac{\mu \circ \phi^{-1} \left(e^* \phi(a)\right)}{\mu \circ \phi^{-1}  \left(e^*e \right)}e \right\Vert_{ \B}\\
&= \left\Vert \phi(a) -\sum_{e \in E_\pi} \frac{\mu  \left(\phi^{-1}(e^*)\phi^{-1}( \phi(a))\right)}{\mu \left( \phi^{-1}  \left(e^*e \right)\right)}e \right\Vert_{ \B}\\
&= \left\Vert \phi^{-1} \left(\phi(a) -\sum_{e \in E_\pi} \frac{\mu  \left(\phi^{-1}(e^*)a)\right)}{\mu \left( \phi^{-1}  \left(e^*e \right)\right)}e \right) \right\Vert_{ \A}\\
&= \left\Vert a -\sum_{e \in E_\pi} \frac{\mu  \left(\phi^{-1}(e^*)a)\right)}{\mu \left( \phi^{-1}  \left(e^*e \right)\right)}\phi^{-1}(e)  \right\Vert_{ \A}\\
&= \left\Vert a -\sum_{e' \in E} \frac{\mu  \left(\phi^{-1}\circ \pi (e'^*)a)\right)}{\mu \left( \phi^{-1} \circ \pi  \left(e'^*e' \right)\right)}\phi^{-1}\circ \pi(e')  \right\Vert_{ \A}\\
& = \left\Vert a - \CondExp{a}{\A_n}\right\Vert_{\A},
\end{align*}
where the last equality follows from  Proposition (\ref{cond-cont}) and the fact that $\phi^{-1} \circ \pi : \oplus_{j=1}^N \M(n(j)) \rightarrow \A_n$ is a *-isomorphism by assumption. 

Thus, since $n \in \N$ was arbitrary, we have:
\begin{equation}\label{lip-eq-eq}\Lip^\beta_{\mathcal{V},\nu }  \circ \phi (a) = \Lip^\beta_{\mathcal{U} ,\mu } (a)
\end{equation} for all $a\in \A$. Hence:
\begin{equation*}\phi : \left(\A, \Lip_{\mathcal{U}, \mu}^\beta\right) \longrightarrow \left(\B,  \Lip_{\mathcal{V}, \nu}^\beta \right)
\end{equation*} is a quantum isometry by Theorem-Definition (\ref{def-thm}).

Also, we have  $\left(\A_m , \Lip^\beta_{\mathcal{U} ,\mu }\right)$ is quantum isometric to 
$\left(\B_m , \Lip^\beta_{\mathcal{V},\nu }\right)$ by the map $\phi$ restricted to $\A_m$ for all $m \in \N$ by hypothesis (1), which completes the proof.
\end{proof}

Now, in Theorem (\ref{q-iso-best-thm}), we provide quantum isometries in the case of  the Leibniz Lip-norms from Theorem (\ref{AF-lip-norms-thm-best}) of the form $\Lip_{\mathcal{U}}^\beta$, and as a corollary, we will do the same for the Leibniz Lip-norms of the form $\overline{ \Lip_{\mathcal{U}}^\beta}$ with the same hypotheses.  Now, since neither of these Lip-norms require information about a faithful tracial state, the conditions to provide quantum isometries are weaker than for Theorem (\ref{q-iso-trace-thm}). Indeed:

\begin{theorem}\label{q-iso-best-thm}
 Let $\A$ be a unital AF algebra  with unit $1_\A$. Let $\mathcal{U} = (\A_n)_{n\in\N}$ be an increasing sequence of unital finite dimensional C*-subalgebras such that $\A=\overline{\cup_{n \in \N} \A_n}^{\Vert \cdot \Vert_\A}$ with $\A_0=\C 1_\A $.  Let $\B$ be a unital AF algebra  with unit $1_\B$  and $\mathcal{V} = (\B_n)_{n\in\N}$ be an increasing sequence of unital finite dimensional C*-subalgebras such that $\B=\overline{\cup_{n \in \N} \B_n}^{\Vert \cdot \Vert_\B}$ with $\B_0=\C 1_\B .$ Let $\beta : \N \rightarrow (0,\infty)$ have limit $0$ at infinity.  Let $\Lip_{\mathcal{U}}^\beta, \Lip_{\mathcal{V}}^\beta$ denote the associated Lip-norms from Theorem (\ref{AF-lip-norms-thm-best}) on $\A, \B$ respectively.

If $\phi : \A \hookrightarrow \B$ is a unital *-monomorphism such that $\phi(\A_n) =\B_n $ for all $n \in \N$, 
then  
\begin{equation*}
\phi : \left(\A, \Lip_{\mathcal{U}}^\beta\right) \longrightarrow \left(\B,  \Lip_{\mathcal{V}}^\beta \right)
\end{equation*}
is a quantum isometry of Theorem-Definition (\ref{def-thm}) and: 
\begin{equation*}
\qpropinquity{1,0} \left(\left(\A, \Lip_{\mathcal{U}}^\beta\right), \left(\B,  \Lip_{\mathcal{V}}^\beta\right)\right)=0.
\end{equation*}
Moreover, for all $n \in \N$, we have:
\begin{equation*}
\qpropinquity{1,0} \left(\left(\A_n, \Lip_{\mathcal{U}}^\beta\right), \left(\B_n,  \Lip_{\mathcal{V}}^\beta\right)\right)=0.
\end{equation*}
\end{theorem}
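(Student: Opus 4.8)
The plan is to show that $\phi$ intertwines the two Lip-norms exactly, i.e.\ that $\Lip_{\mathcal{V}}^\beta \circ \phi = \Lip_{\mathcal{U}}^\beta$ on $\sa{\A}$; by Theorem-Definition (\ref{def-thm}) this simultaneously yields that $\phi$ is a quantum isometry and that the propinquity between the two spaces vanishes. Since Theorem (\ref{AF-lip-norms-thm-best}) already guarantees that $\left(\A,\Lip_{\mathcal{U}}^\beta\right)$ and $\left(\B,\Lip_{\mathcal{V}}^\beta\right)$ are Leibniz quantum compact metric spaces, no further verification of the quasi-Leibniz axioms is needed.

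First I would upgrade $\phi$ to a surjection. Since $\phi$ is an injective *-homomorphism of C*-algebras it is isometric, so $\phi(\A)$ is a closed subalgebra of $\B$; and because $\phi(\A_n)=\B_n$ for every $n$ we have $\phi(\cup_{n\in\N}\A_n)=\cup_{n\in\N}\B_n$, which is dense in $\B$. Hence $\phi(\A)=\B$, and $\phi$ is a unital *-isomorphism carrying $\A_n$ onto $\B_n$ for all $n$ (this is also exactly \cite[Theorem 2.7]{Bratteli72}, as used in the proof of Theorem (\ref{q-iso-trace-thm})).

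Next I would compare the quotient norms. Writing $S_n^{\A}$ and $S_n^{\B}$ for the quotient norms of $\A/\A_n$ and $\B/\B_n$ respectively, for any $n\in\N$ and $a\in\A$ we have
\begin{equation*}
S_n^{\B}(\phi(a)) = \inf_{b\in\B_n}\|\phi(a)-b\|_\B = \inf_{a'\in\A_n}\|\phi(a)-\phi(a')\|_\B = \inf_{a'\in\A_n}\|a-a'\|_\A = S_n^{\A}(a),
\end{equation*}
using $\B_n=\phi(\A_n)$ in the second equality and the fact that $\phi$ is an isometric linear map in the third. Dividing by $\beta(n)$ and taking the supremum over $n\in\N$ gives $\Lip_{\mathcal{V}}^\beta(\phi(a)) = \Lip_{\mathcal{U}}^\beta(a)$ for all $a\in\A$; since $\phi$ also maps $\sa{\A}$ onto $\sa{\B}$, restricting to self-adjoint elements shows that $\phi\colon\left(\A,\Lip_{\mathcal{U}}^\beta\right)\to\left(\B,\Lip_{\mathcal{V}}^\beta\right)$ is a quantum isometry of Theorem-Definition (\ref{def-thm}), whence $\qpropinquity{1,0}\left(\left(\A,\Lip_{\mathcal{U}}^\beta\right),\left(\B,\Lip_{\mathcal{V}}^\beta\right)\right)=0$.

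Finally, the finite-dimensional statement follows by restriction: for fixed $n$, the map $\phi$ restricted to $\A_n$ is a unital *-isomorphism onto $\B_n$, the restriction of $\Lip_{\mathcal{U}}^\beta$ to $\sa{\A_n}$ is the finite supremum of $S_m^{\A}(\cdot)/\beta(m)$ over $m\in\{0,\dots,n-1\}$ (since $S_m^{\A}$ vanishes on $\A_n$ for $m\geq n$), and the same computation as above, now for $a\in\A_n$, yields $\Lip_{\mathcal{V}}^\beta\circ\phi=\Lip_{\mathcal{U}}^\beta$ on $\sa{\A_n}$; another application of Theorem-Definition (\ref{def-thm}) gives $\qpropinquity{1,0}\left(\left(\A_n,\Lip_{\mathcal{U}}^\beta\right),\left(\B_n,\Lip_{\mathcal{V}}^\beta\right)\right)=0$. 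The argument is essentially routine; the only point that deserves care is the identity $S_n^{\B}\circ\phi=S_n^{\A}$, which relies on $\phi$ being simultaneously isometric (automatic for *-isomorphisms of C*-algebras) and carrying $\A_n$ onto $\B_n$.
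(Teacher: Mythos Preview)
Your proof is correct and follows essentially the same route as the paper: both arguments first observe that $\phi$ is a *-isomorphism (the paper by citing \cite[Theorem 2.7]{Bratteli72}, you by the elementary closed-image argument), then verify the key identity $S_n^{\B}\circ\phi = S_n^{\A}$ to conclude $\Lip_{\mathcal{V}}^\beta\circ\phi = \Lip_{\mathcal{U}}^\beta$, with the finite-dimensional case following by restriction. The only cosmetic difference is that the paper computes the quotient-norm identity using the isometry $\phi^{-1}$ whereas you use $\phi$ directly, which is of course equivalent.
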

\begin{proof}
For each $n \in \N$, let $S^\A_n : \A/\A_n \longrightarrow \R$ denote the quotient norm and similarly denote $S^\B_n $. Fix $a \in \A$.  Since $\phi: \A \longrightarrow \B$ is a *-isomorphism by \cite[Theorem 2.7]{Bratteli72}, we have for all $n \in \N$: 
\begin{align*}
S^\B_n (\phi(a)) &= \inf \left\{ \Vert \phi(a) -b \Vert_\B : b \in \B_n \right\} \\
& = \inf \left\{ \Vert \phi^{-1}\left(\phi(a) -b \right)\Vert_\A : b \in \B_n \right\} \\ 
& = \inf \left\{ \Vert a -\phi^{-1}(b) \Vert_\A : b \in \B_n \right\} \\
& =  \inf \left\{ \Vert a -a'\Vert_\A : a' \in \A_n \right\}\\
& = S^\A_n (a), 
\end{align*}
where in the second to last equality we use the fact that $\phi^{-1}(\B_n) = \A_n $.  The rest of the proof follows exactly as the rest of the proof of Theorem (\ref{q-iso-trace-thm}) starting at Equation (\ref{lip-eq-eq}).
\end{proof}
We will now provide quantum isometries for the Lip-norms that provide the desirable convergence of finite-dimensional spaces as seen in Theorem (\ref{AF-lip-norms-thm-best}), and we see a direct application of the importance of having quantum isometries that preserve finite-dimensional approximations.  

\begin{corollary}\label{closed-q-iso-best-cor}
Let $\A$ be a unital AF algebra  with unit $1_\A$. Let $\mathcal{U} = (\A_n)_{n\in\N}$ be an increasing sequence of unital finite dimensional C*-subalgebras such that $\A=\overline{\cup_{n \in \N} \A_n}^{\Vert \cdot \Vert_\A}$ with $\A_0=\C 1_\A $.  Let $\B$ be a unital AF algebra  with unit $1_\B$  and $\mathcal{V} = (\B_n)_{n\in\N}$ be an increasing sequence of unital finite dimensional C*-subalgebras such that $\B=\overline{\cup_{n \in \N} \B_n}^{\Vert \cdot \Vert_\B}$ with $\B_0=\C 1_\B .$ Let $\beta : \N \rightarrow (0,\infty)$ have limit $0$ at infinity.  Let $\overline{\Lip_{\mathcal{U}}^\beta}, \overline{\Lip_{\mathcal{V}}^\beta}$ denote the associated Lip-norms from Theorem (\ref{AF-lip-norms-thm-best}) on $\A, \B$ respectively.

If $\phi : \A \hookrightarrow \B$ is a unital *-monomorphism such that $\phi(\A_n) =\B_n $ for all $n \in \N$, 
then there exists a quantum isometry  from $\left(\A, \overline{\Lip_{\mathcal{U}}^\beta} \right)$ to $\left(\B,  \overline{\Lip_{\mathcal{V}}^\beta}\right)$ and thus:
\begin{equation*}
\qpropinquity{1,0} \left(\left(\A, \overline{\Lip_{\mathcal{U}}^\beta} \right), \left(\B,  \overline{\Lip_{\mathcal{V}}^\beta}\right)\right)=0.
\end{equation*} 
Moreover: 
\begin{equation*}
\qpropinquity{1,0} \left(\left(\A_n, \overline{\Lip_{\mathcal{U}}^\beta}\right), \left(\B_n,  \overline{\Lip_{\mathcal{V}}^\beta}\right)\right)=0 \text{ for all } n \in \N.
\end{equation*}
\end{corollary}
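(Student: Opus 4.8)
The plan is to transport the conclusion of Theorem (\ref{q-iso-best-thm}) through the closure operation that defines $\overline{\Lip}$. First I would recall from the proof of Theorem (\ref{q-iso-best-thm}) that, under the hypothesis $\phi(\A_n)=\B_n$ for all $n$, the map $\phi$ is a $*$-isomorphism by \cite[Theorem 2.7]{Bratteli72} and satisfies $S^\B_n(\phi(a)) = S^\A_n(a)$ for all $a\in\A$ and $n\in\N$, so that $\Lip_{\mathcal{V}}^\beta\circ\phi = \Lip_{\mathcal{U}}^\beta$ on all of $\sa{\A}$ (both sides allowed to be $\infty$). Since moreover $\phi$ is self-adjointness-preserving and $\phi\left(\cup_{n\in\N}\A_n\right) = \cup_{n\in\N}\B_n$, it restricts to a bijection
\begin{equation*}
\phi\left(\left\{ a\in\sa{\cup_{n\in\N}\A_n} : \Lip_{\mathcal{U}}^\beta(a)\leq 1\right\}\right) = \left\{ b\in\sa{\cup_{n\in\N}\B_n} : \Lip_{\mathcal{V}}^\beta(b)\leq 1\right\}.
\end{equation*}

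Next, because $\phi$ is a $*$-isomorphism of C*-algebras it is isometric for $\Vert\cdot\Vert_\A$ and $\Vert\cdot\Vert_\B$, hence it carries norm-closures to norm-closures; applying this to the displayed bijection shows that $\phi$ maps the set $L_{f,1}$ attached to $\left(\A,\Lip_{\mathcal{U}}^\beta\right)$ onto the set $L_{f,1}$ attached to $\left(\B,\Lip_{\mathcal{V}}^\beta\right)$, in the notation of Proposition (\ref{fd-approx-af-prop}). Since $\overline{\Lip_{\mathcal{U}}^\beta}$ and $\overline{\Lip_{\mathcal{V}}^\beta}$ are by definition the Minkowski functionals of these two sets and $\phi$ is linear and scales correctly, it follows that $\overline{\Lip_{\mathcal{V}}^\beta}\circ\phi = \overline{\Lip_{\mathcal{U}}^\beta}$ on $\sa{\A}$. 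By Theorem (\ref{AF-lip-norms-thm-best}), both $\left(\A,\overline{\Lip_{\mathcal{U}}^\beta}\right)$ and $\left(\B,\overline{\Lip_{\mathcal{V}}^\beta}\right)$ are Leibniz quantum compact metric spaces, so $\phi$ is a quantum isometry in the sense of Theorem-Definition (\ref{def-thm}), and item (5) of that theorem-definition gives $\qpropinquity{1,0}\left(\left(\A,\overline{\Lip_{\mathcal{U}}^\beta}\right),\left(\B,\overline{\Lip_{\mathcal{V}}^\beta}\right)\right) = 0$.

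For the finite-dimensional assertion I would invoke Expression (\ref{minkowski-lip-ball-eq}) of Proposition (\ref{fd-approx-af-prop}), which gives $\overline{\Lip_{\mathcal{U}}^\beta} = \Lip_{\mathcal{U}}^\beta$ on $\sa{\A_n}$ and $\overline{\Lip_{\mathcal{V}}^\beta} = \Lip_{\mathcal{V}}^\beta$ on $\sa{\B_n}$ for each $n$. Since $\phi(\A_n)=\B_n$ and $\Lip_{\mathcal{V}}^\beta\circ\phi = \Lip_{\mathcal{U}}^\beta$, the restriction $\phi|_{\A_n}\colon \left(\A_n,\overline{\Lip_{\mathcal{U}}^\beta}\right)\to\left(\B_n,\overline{\Lip_{\mathcal{V}}^\beta}\right)$ is a $*$-isomorphism intertwining the two Lip-norms, hence a quantum isometry, and Theorem-Definition (\ref{def-thm}) again yields $\qpropinquity{1,0}\left(\left(\A_n,\overline{\Lip_{\mathcal{U}}^\beta}\right),\left(\B_n,\overline{\Lip_{\mathcal{V}}^\beta}\right)\right)=0$ for every $n\in\N$.

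The only point requiring genuine care — and thus the main obstacle — is the interaction of $\phi$ with the closure operation: one must verify that an isometric $*$-isomorphism which preserves the uncompleted Lip-balls also preserves their norm-closures, and therefore the associated Minkowski functionals. This is elementary precisely because $\phi$ is C*-isometric, but it is where the hypothesis $\phi(\A_n)=\B_n$ (rather than some weaker compatibility condition) is actually used, since that is what guarantees both $\phi\left(\cup_n\A_n\right) = \cup_n\B_n$ and the equality of the relevant Lip-balls prior to taking closures.
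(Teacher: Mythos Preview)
Your proof is correct, but it takes a genuinely different route from the paper. The paper argues indirectly: it first deduces $\qpropinquity{1,0}\!\left(\left(\A_n,\overline{\Lip_{\mathcal{U}}^\beta}\right),\left(\B_n,\overline{\Lip_{\mathcal{V}}^\beta}\right)\right)=0$ from Theorem (\ref{q-iso-best-thm}) together with $\left(\A_n,\overline{\Lip_{\mathcal{U}}^\beta}\right)=\left(\A_n,\Lip_{\mathcal{U}}^\beta\right)$, and then sandwiches via the triangle inequality and part (2) of Theorem (\ref{AF-lip-norms-thm-best}) to force $\qpropinquity{1,0}\!\left(\left(\A,\overline{\Lip_{\mathcal{U}}^\beta}\right),\left(\B,\overline{\Lip_{\mathcal{V}}^\beta}\right)\right)=0$, appealing to the coincidence axiom of Theorem-Definition (\ref{def-thm}) to obtain the \emph{existence} of some quantum isometry. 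You instead prove directly that $\phi$ itself intertwines the closed Lip-norms, by pushing the isometric $*$-isomorphism through the norm-closure and the Minkowski-functional construction. Your approach is more constructive --- it names the quantum isometry --- and avoids invoking the convergence result of Theorem (\ref{AF-lip-norms-thm-best}); the paper's approach, on the other hand, illustrates how finite-dimensional approximation in propinquity can be used to transfer isometry from the subalgebras to the limit, at the cost of only asserting existence rather than identifying the map.
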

\begin{proof}
By Proposition (\ref{fd-approx-af-prop}), we have  that $\left(\A_n, \overline{\Lip_{\mathcal{U}}^\beta}\right)=\left(\A_n,\Lip_{\mathcal{U}}^\beta\right)$ and  $\left(\B_n,  \overline{\Lip_{\mathcal{V}}^\beta}\right)= \left(\B_n,  \Lip_{\mathcal{V}}^\beta \right)$ for all $n \in \N$.  Thus, by  Theorem (\ref{q-iso-best-thm}), we have that: 
\begin{equation*} \qpropinquity{1,0} \left(\left(\A_n, \overline{\Lip_{\mathcal{U}}^\beta}\right), \left(\B_n,  \overline{\Lip_{\mathcal{V}}^\beta}\right)\right)=0
\end{equation*}  for all  $ n \in \N$ by the triangle inequality.

Next, by  the triangle inequality, we have:
\begin{align*}
\qpropinquity{1,0} \left(\left(\A, \overline{\Lip_{\mathcal{U}}^\beta} \right), \left(\B,  \overline{\Lip_{\mathcal{V}}^\beta}\right)\right)& \leq  \qpropinquity{1,0} \left(\left(\A, \overline{\Lip_{\mathcal{U}}^\beta} \right), \left(\A_n,  \overline{\Lip_{\mathcal{U}}^\beta}\right)\right)\\
& \quad  + \qpropinquity{1,0} \left(\left(\A_n, \overline{\Lip_{\mathcal{U}}^\beta} \right), \left(\B_n,  \overline{\Lip_{\mathcal{V}}^\beta}\right)\right) \\
& \quad + \qpropinquity{1,0} \left(\left(\B_n, \overline{\Lip_{\mathcal{U}}^\beta} \right), \left(\B,  \overline{\Lip_{\mathcal{V}}^\beta}\right)\right)\\
&=\qpropinquity{1,0} \left(\left(\A, \overline{\Lip_{\mathcal{U}}^\beta} \right), \left(\A_n,  \overline{\Lip_{\mathcal{U}}^\beta}\right)\right)  + 0 \\
& \quad + \qpropinquity{1,0} \left(\left(\B_n, \overline{\Lip_{\mathcal{U}}^\beta} \right), \left(\B,  \overline{\Lip_{\mathcal{V}}^\beta}\right)\right).
\end{align*}
Hence, we have $\qpropinquity{1,0} \left(\left(\A, \overline{\Lip_{\mathcal{U}}^\beta} \right), \left(\B,  \overline{\Lip_{\mathcal{V}}^\beta}\right)\right)=0$ by part (2) of Theorem (\ref{AF-lip-norms-thm-best}).  Therefore, by Theorem-Defintion (\ref{def-thm}), there exists a quantum isometry from $\left(\A, \overline{\Lip_{\mathcal{U}}^\beta} \right)$ to $\left(\B,  \overline{\Lip_{\mathcal{V}}^\beta}\right)$.
\end{proof}

\bibliographystyle{plain}
\bibliography{thesis-a}

\vfill

\end{document}